\documentclass{scrartcl}

\usepackage[
paper=a4paper,
left=2cm,
right=2cm
]{geometry}
\usepackage{import}

\usepackage{amsmath, amssymb, bbold, mathtools}
\usepackage{algorithm,algpseudocode}
\usepackage{multirow}
\usepackage{graphicx}
\usepackage{tcolorbox}
\usepackage[]{hyperref}
\usepackage{bbding}
\usepackage[export]{adjustbox}
\usepackage{subcaption}
\usepackage{enumerate}
\usepackage[textsize=footnotesize]{todonotes}

\usepackage{xcolor}
\usepackage{colortbl}
\usepackage{booktabs}

\usepackage{authblk}
\usepackage{cleveref} 

\usepackage{wasysym}
\usepackage{lmodern} 

\usepackage{natbib}
 \bibpunct[, ]{(}{)}{,}{a}{}{,}

\usepackage{tikz}
\usetikzlibrary{calc}
\usetikzlibrary{shapes}
\usetikzlibrary{spy}

\usepackage{amsthm}
\theoremstyle{plain}


\newcounter{parentnumber}

\newtheorem{theorem}{Theorem}
\newtheorem{lemma}{Lemma}

\newtheorem{proposition}{Proposition}
\newtheorem{assumption}{Assumption}

\usepackage{acronym}
\acrodef{wlog}[WLOG]{without loss of generality}
\acrodef{lsc}[lsc]{lower semi-continuous}

\definecolor{red}{RGB}{163, 31, 52}
\definecolor{gray}{RGB}{194, 192, 191}
\definecolor{blue}{RGB}{59, 89, 152}
\definecolor{green}{RGB}{0, 179, 0}

\newcommand{\norm}[1]{\left\|#1\right\|}

\newcommand{\ceil}[1]{\left\lceil#1\right\rceil}

\newcommand{\abs}[1]{\left|#1\right|}

\newcommand{\E}[2]{{\mathbb E}_{#1} \left[ #2 \right]}

\newcommand{\one}[1]{\mathbb{1}\left\{#1\right\}}
\renewcommand{\tfrac}[2]{{#1}/{#2}}

\newcommand{\set}[2]{\left\{ #1\ : \ #2 \right\}}
\newcommand{\tset}[2]{\{ #1\ : \ #2 \}}
\newcommand{\KL}{{\mathrm{KL}}}

\newcommand{\tpose}{^\top}
\newcommand{\defn}[0]{:=}

\newcommand{\mc}{\mathcal}
\newcommand{\mb}{\mathbb}

\newcommand{\mr}{\mathrm}

\renewcommand{\d}{{\mathrm{d}}}

\renewcommand{\P}{\mb{P}}

\renewcommand{\Re}{\mathrm{R}}

\newcommand{\st}{\mr{s.t.}}

\DeclareMathOperator{\cl}{cl}

\DeclareMathOperator{\typ}{tp}
\DeclareMathOperator{\wc}{wc}

\DeclareMathOperator{\Prob}{Prob}

\DeclareMathOperator{\conv}{conv}

\DeclareMathOperator{\ctr}{ctr}
\DeclareMathOperator{\sign}{sign}

\usepackage{pgfplots}
\usepgfplotslibrary{statistics}

\usepackage{amsthm}
\usepackage[framemethod=TikZ]{mdframed} 
\usepackage{xcolor}

\newmdenv[
  topline=false,
  bottomline=false,
  rightline=false,
  leftline=false,
  backgroundcolor=black!5,
  innerleftmargin=10pt,
  innerrightmargin=10pt,
  innertopmargin=6pt,
  innerbottommargin=6pt
]{exampleframe}

\theoremstyle{definition}
\newtheorem{example}{Example}

\newenvironment{examplebox}
  {\begin{exampleframe}\begin{example}}
  {\end{example}\end{exampleframe}}

\usepackage[appendix=inline]{apxproof}

\usepackage{siunitx}

\usepackage{setspace}
\allowdisplaybreaks

\title{Globalized Adversarial Regret Optimization}
\subtitle{Robust Decisions with Uncalibrated Predictions}
\author[1]{Jannis Kurtz \thanks{j.kurtz@uva.nl}}
\author[2]{Bart P.G.\ van Parys \thanks{bart.van.parys@cwi.nl}}

\affil[1]{University of Amsterdam}
\affil[2]{CWI Amsterdam}

\date{\today}

\begin{document}

\maketitle

\begin{abstract}
Optimization problems routinely depend on uncertain parameters that must be predicted before a decision is made. Classical robust and regret formulations are designed to handle erroneous predictions and can provide statistical error bounds in simple settings. However, when predictions lack rigorous error bounds (as is typical of modern machine learning methods) classical robust models often yield vacuous guarantees, while regret formulations can paradoxically produce decisions that are more optimistic than even a nominal solution.
We introduce Globalized Adversarial Regret Optimization (GARO), a decision framework that controls adversarial regret, defined as the gap between the worst-case cost and the oracle robust cost, uniformly across all possible uncertainty set sizes. By design, GARO delivers absolute or relative performance guarantees against an oracle with full knowledge of the prediction error, without requiring any probabilistic calibration of the uncertainty set.
We show that GARO equipped with a relative rate function generalizes the classical adaptation method of \citet{lepskii1993asymptotically} to downstream decision problems. We derive exact tractable reformulations for problems with affine worst-case cost functions and polyhedral norm uncertainty sets, and provide a discretization and a constraint-generation algorithm with convergence guarantees for general settings. Finally, experiments demonstrate that GARO yields solutions with a more favorable trade-off between worst-case and mean out-of-sample performance, as well as stronger global performance guarantees.
\end{abstract}

\clearpage

\section{Introduction}

Consider an optimization problem
\begin{equation}
  \label{eq:optimization}
  \min_{x\in X} f(x, p^\star)
\end{equation}
which seeks a decision $x\in X$ minimizing a cost function $f:X\times P \to\Re_+\cup\{+\infty\}$ given a problem parameter $p^\star\in P$. 
The field of mathematical optimization, which began in earnest with Dantzig's simplex method in 1947, has since produced powerful tools to find optimal decisions in \Cref{eq:optimization} algorithmically.
However, before any such tools can be applied, practitioners must face the fact that the problem parameters $p^\star$ are typically subject to uncertainty. Before proceeding, a decision maker often first makes a prediction $p_0$ of the problem parameters, and then makes a decision $x_{nom}$ by solving a nominal ``predict-then-optimize'' formulation
\begin{equation}
  \label{eq:nom-optimization}
  \min_{x\in X} f(x, p_0).
\end{equation}

\subsection{From Classical to Wild Predictions}
\label{sec:from-classical-wild}

The use of predictions to facilitate downstream decision-making is an idea at least as old as mathematical optimization itself.
Nevertheless, the nature of predictions has changed remarkably.
We argue that downstream decision tools have largely failed to reflect these changes.

\paragraph{Classical Predictions.}
Predictions have classically been constructed with the help of statistical estimators.
As a case in point, problem \eqref{eq:optimization} may represent a vehicle routing problem \citep{toth2002vehicle} with an unknown cost vector $p^\star=\E{}{p}$ of expected edge costs.
The decision-maker estimates $p^\star$ as the sample average $p_0=\frac{1}{N}\sum_{i=1}^N p_i$ of $N$ historical realizations.
If the decision-maker has access to \textit{curated} data, it is common to assume that each historical observation is an independent sample sharing the same distribution as future random edge costs.
In the context of such curated data, predictions often come with desirable concentration inequalities and associated error margin or confidence bounds. We use the problem of univariate mean estimation as a running example.
After imposing sub-Gaussian tails $\E{}{\exp(\lambda (p_i-p^\star))}\leq \exp(\lambda^2 \sigma^2/2)$ for some given finite $\sigma^2$, it is well known that the classical empirical average predictor discussed earlier satisfies the concentration inequality
\(
  \Prob{}\left[|p_0- p^\star| > \gamma \right]\leq 2 \exp\left( - N \gamma^2 / (2\sigma^2)\right).
\)
Hence, we have
\(
\Prob{}\left[|p_0- p^\star| \leq \gamma_N(\delta) \right]\geq 1-\delta
\)
with $d(p_0, p^\star)=|p_0-p^\star|$ and $\gamma_N(\delta) = \mc O( \sigma \sqrt{\log(2/\delta)/N})$.
We denote a prediction as \textit{classical} when it enjoys 
an error margin guarantee 
\begin{equation}
  \label{def:classical-prediction}
  \Prob{}\left[d(p_0, p^\star) \leq \gamma_N(\delta) \right]\geq 1-\delta
\end{equation}
for any $\delta \in (0, 1)$ and an appropriate\footnote{We will assume throughout that the distance function is lower semicontinuous and satisfies $d(p_0, p_0)=0$ for all $p_0\in P$.} distance function $d:P\times P \to\Re_+\cup\{\infty\}$ for all $N\geq 1$.
Classical robust or regret optimization formulations are designed to exploit such classical predictions and are able to return decisions with rigorous performance guarantees.

\paragraph{Adaptive Predictions.}
It has however long been recognized that practical data is more challenging than what classical predictions can account for. We give two examples.

Many parameters related to natural phenomena or economic indicators are regularly varying, i.e., $\Prob \left[p\geq x\right] = x^{-\nu} L(x)$  and $L(x)$ a slowly varying function.
This means that $\E{}{\exp(\lambda (p_i-p^\star))}=\infty$ for any $\lambda>0$ and $\nu > 1$, and that $\E{}{|p_i-p^\star|^a}=\infty$ for $a>\nu$ while $\E{}{|p_i-p^\star|^a}<\infty$ for $a<\nu$ \citep{nair2022fundamentals}.
In particular, the empirical mean is a fragile estimator in this setting.
Robust mean estimators which satisfy
\begin{equation}
  \label{eq:m-estimate}
  \frac1N \sum_{i=1}^N \psi\left(\lambda_N(p_i-p_0) \right)=0
\end{equation}
for influence function $\psi$ and scaling parameter $\lambda_N$ can be considered instead. The empirical mean corresponds to the case where the influence function is the identity.
Assuming $\left(\E{}{|p_i-p^\star|^a}\right)^{1/a}\leq \sigma_a$ for some finite $\sigma_a<\infty$ and moment order $a>1$, \cite{bhatt2022nearly} proposes a classical estimator $p_0(\sigma_a)$ satisfying \eqref{def:classical-prediction} of minimax optimal radius $\gamma_{N}(\delta, \sigma_a) = \mc O( \sigma_a \left(\log(2/\delta)/N\right)^{(a-1)/a})$.
This is achieved with influence function $\psi(\Delta) = \sign(\Delta) \mc O(\log(|\Delta|^a))$ and scaling parameter $\lambda_N(\sigma_a) = \mc O(\sigma_a^{-1} (\log(2/\delta)/N)^{1/a})$.

Many data sets contain a small fraction $\epsilon \in (0, 1/2)$ of the data points which are corrupted either due to malice or simple error. As was already the case in the context of heavy tail data, the empirical mean is well known to be fragile. This spurred on the development of robust alternatives. \citet{huber1964robust, huber1981robust} advances a prediction $p_0(\epsilon)$ based on Equation (\ref{eq:m-estimate}) with clipped influence function $\psi(x) = \max(\min(x, 1), -1)$ which achieves \eqref{def:classical-prediction} for minimax optimal radius $\gamma_N(\delta, \epsilon) = \mc O(\sigma (\sqrt{\epsilon} + \sqrt{\log(2/\delta)/N} ))$ with $\lambda_N=\sigma^{-1}\sqrt{\epsilon+\log(2/\delta)/N}$ assuming only that $\E{}{|p_i-p^\star|^2}\leq \sigma^2<\infty$.

In the previously discussed examples the estimate $p_0(\kappa)$ satisfies
\begin{equation}
  \label{eq:classical-predictor-family}
  \Prob_{\kappa}\left[d(p_0(\kappa), p^\star) \leq \gamma_N(\delta, \kappa) \right]\geq 1- \delta
\end{equation}
where $\Prob_\kappa$ denotes the probability law of the data when the structural parameter takes value $\kappa$.
This guarantee depends critically on an auxiliary parameter $\kappa \in K \subseteq [\kappa_{\min}, \kappa_{\max}]$ ($\sigma_a$ and $\epsilon$ in the case of heavy-tail or corrupt data) which is unknown in practice. As is the case in the discussed examples, we assume here that larger auxiliary parameters correspond to harder prediction problems, resulting in a nondecreasing margin $\kappa \mapsto \gamma_N(\delta, \kappa)$ for any $\delta$, $N$. A conservative prediction $p_0 = p_0(\kappa_{\max})$ corresponding to the worst-case parameter choice could be considered.
However, adaptive guarantees can be obtained without resorting to the worst-case parameter.
For instance, let $\kappa_j = \kappa_{\min} \beta^j$ for $j=0,\dots, J$ with $J=\ceil{\log_{\beta}(\tfrac{\kappa_{\max}}{\kappa_{\min}})}$ and $\beta > 1$.
Define the adaptive estimate $p_0 = p_0(\kappa^\star)$ where $\kappa^\star$ is the smallest grid point $\kappa_j$ for which $\set{p\in P}{d(p_0(\kappa_i), p)\leq \gamma_N(\delta, \kappa_i) ~~\forall i \geq j}$ is nonempty.
\citet{lepskii1993asymptotically} shows the adaptive guarantee
\begin{equation}
  \label{eq:Lepskis-guarantee}
  \Prob_{\kappa}\left[d(p_0, p^\star) \leq 2 \gamma_N(\delta/(J+1) , \beta\kappa) \right]\geq 1- \delta
\end{equation}
indicating performance on par with the predictor $p_0(\kappa)$ which does have access to the unknown parameter when the distance function satisfies the triangle inequality. More generally, we denote a prediction $p_0$ which does not depend on $\kappa$ as \textit{adaptive} when it enjoys 
an error margin guarantee 
\begin{equation}
  \label{def:adaptive-prediction}
  \Prob_{\kappa}\left[d(p_0, p^\star) \leq \gamma_N(\delta, \kappa) \right]\geq 1-\delta.
\end{equation}
for all $N\geq 1$.
Although the adaptive estimates are not as conservative, their guarantees are still relative to an unknown parameter which prevents the use of downstream robust or regret optimization formulations with absolute performance guarantees.

\paragraph{Wild Predictions.}
Increasingly, modern machine learning methods (including deep neural networks \citep{lecun2015deep}), gradient boosted trees \citep{chen2016xgboost}, and large-scale foundation models \citep{bommasani2021opportunities}) produce predictions with remarkable empirical accuracy yet come with no rigorous performance guarantees \citep{goodfellow2015explaining}.
We denote a prediction as \textit{wild} when it is frequently empirically accurate ($\gamma^\star\defn d(p_0, p^\star) \leq \gamma_{\typ}$) but rigorous claims only establish $\gamma^\star \leq \gamma_{\wc}$ where the ratio $\gamma_{\wc}/\gamma_{\typ}\gg 1$ is very large.
The formal worst-case bound $\gamma_{\wc}$ may stem, for instance, from a Lipschitz argument \citep{bartlett2017spectrally}, a generalization bound \citep{bartlett2002rademacher}, or a smoothness assumption \citep{tsybakov2009nonparametric}, but the resulting uncertainty set $\set{p}{d(p_0,p)\leq \gamma_{\wc}}$ is so large as to render any robust formulation practically useless.
Wild predictors are fundamentally different from adaptive ones: while adaptive predictions preserve a formal relationship between the radius $\gamma_N(\delta,\kappa)$ and a structural parameter $\kappa$, wild predictions offer no such structure.
The downstream decision-making challenge is therefore not to exploit a formal guarantee, but rather to design decisions that perform well when the prediction is accurate $(\gamma^\star\leq \gamma_{\typ})$ while degrading gracefully relative to an appropriate benchmark when it is not $(\gamma^\star\leq \gamma_{\wc})$.

\subsection{Literature Review}

The robust optimization literature lists several strategies to help reduce the conservatism inherent to worst-case formulations.
\citet{bertsimas2004price} show that budget uncertainty sets can to some extent help manage the conservatism.
\citet{fischetti2009light} relax hard robust constraints by allowing small, penalized violations, obtaining solutions that are nearly robust yet substantially less conservative.
A comprehensive treatment of these ideas appears in \citet{ben2009robust}. Another model to reduce the conservatism of the classical robust model is robust regret or min-max regret \citep{savage1951theory,kouvelis2013robust,aissi2009min} where the difference to the optimal value in each of the uncertain scenarios is minimized. All previously mentioned models have in common that the considered performance criteria can only be guaranteed on a (usually bounded) uncertainty set. Globalized robust methods attempt to control the performance of the calculated solution also outside of the uncertainty set \cite{long2023robust,ben2017globalized}.
Our framework combines and complements the latter lines of work by measuring conservatism through a globalized adversarial regret operator and by providing a guarantee that degrades smoothly with the prediction error level.

A parallel development in computer science is the growing literature on \emph{algorithms with predictions} \citep{mitzenmacher2022algorithms}.
These algorithms receive a (possibly erroneous) prediction and seek competitive ratios that interpolate between the optimal ratio when the prediction is exact and a worst-case ratio when it is adversarial.
Foundational contributions include \citet{lykouris2021competitive} for caching and \citet{purohit2018improving} for ski rental and scheduling.
Our framework shares the same high-level aspiration: exploit accurate predictions while remaining robust to inaccurate ones.
The key distinction is that algorithms with predictions operate in an online or combinatorial setting with discrete competitive ratios, whereas our framework targets continuous optimization under uncertainty with a smooth, tunable performance guarantee.

The bicriteria perspective on robustness \citep{chassein2016bicriteria} is also related: it treats nominal and worst-case cost as two competing objectives and traces the Pareto frontier between them.
Our framework imposes a richer structure by controlling adversarial regret uniformly across all uncertainty levels $\gamma \in \Gamma$, rather than balancing only two extreme scenarios.
A detailed comparison with robust, regret, and satisficing formulations is deferred to Section \ref{sec:uncert-optim}.

\subsection{Contributions}

The main contributions of this paper are as follows.

\begin{itemize}
  
\item We show that standard robust formulations yield vacuous guarantees under wild predictions, that classical regret formulations can produce decisions more optimistic than even a nominal formulation, and that satisficing formulations are under typical convexity assumptions merely robust formulations in disguise.
  
\item We introduce \emph{adversarial regret} --- the gap between a decision's worst-case cost and the oracle robust cost --- as a novel performance target, and propose Globalized Adversarial Regret Optimization (\ref{eq:glob-rob-regret}), which controls this adversarial regret uniformly over all prediction error levels.

\item GARO can deliver either an \emph{absolute} \eqref{eq:gror-guarantee-absolute} or a \emph{relative} \eqref{eq:gror-guarantee-relative} performance guarantee against the oracle robust benchmark. With a relative rate function, GARO generalizes the classical adaptation method of \citet{lepskii1993asymptotically} from statistical estimation to downstream decision problems (Theorem~\ref{thm:generalized-lepski}).

\item We derive exact tractable reformulations for affine and linear-polyhedral settings, and provide a discretization scheme and a constraint generation algorithm with convergence guarantees for general problems.

\end{itemize}

\section{Uncertain Optimization}
\label{sec:uncert-optim}

Decision-making in the face of uncertainty has been an active topic of research over the last fifty years, but the resulting tools were, by and large, developed with classical predictions in mind. We discuss here the benefits and downsides of classical uncertain optimization formulations in the face of wild predictions.

\subsection{Robust Optimization}

It has long been understood that blindly trusting the prediction $p_0$ as a substitute for $p^\star$ in the nominal formulation \eqref{eq:nom-optimization} does result in rather gullible decisions.
\citet{ben2000robust} were perhaps the first to point out that even if $d(p_0, p^\star)$ is very small, the actual cost $f(x_{nom}, p^\star)$ can far exceed the anticipated cost $f(x_{nom}, p_0)$; a phenomenon known as the ``curse of optimization''.
For a given uncertainty set $P_{\gamma_0}$ robust optimization 
\begin{equation}
  \label{eq:rob-optimization}
  \min_{x\in X} \max_{p\in P_{\gamma_0}} f(x, p)
\end{equation}
has since been widely adopted to avoid this curse through a worst-case perspective.
A notable property that explains the popularity of the worst-case perspective is that it preserves the convexity of the nominal problem.
That is, if the objective function $f$ is convex-concave and the constraint and parameter sets $X$ and $P_{\gamma_0}$ are convex, then the robust formulation (\ref{eq:rob-optimization}) reduces to a convex-concave saddle point problem which, under rather mild technical assumptions, admits tractable algorithms \citep{ben2002robust}.

The robust set $P_{\gamma_0}$ characterizes all scenarios against which the decision maker wishes to anticipate.
In the context of classical predictions, a natural choice is simply to consider a set
\begin{equation}
  \label{eq:ambiguity-set}
  P_{\gamma_0} = \set{p\in P}{d(p_0, p)\leq \gamma_0}
\end{equation}
where the probability of the event $p^\star\in P_{\gamma_0}$ occurring is controlled explicitly by the classical guarantee in Equation (\ref{def:classical-prediction}) through the selection of an appropriate robustness parameter $\gamma_0$.
Robust optimization relies on the implication
\[
  p^\star\in P_{\gamma_0} \implies f(x, p^\star) \leq v_{wc}(x, \gamma_0) \defn \max_{p\in P_{\gamma_0}} f(x, p)
\]
and the observation that in the face of the curse of optimization it typically holds that
\[
v_{wc}(x_{rob}(\gamma_0), \gamma_0) \defn \min_{x\in X} v_{wc}(x, \gamma_0) \ll v_{wc}(x_{nom}, \gamma_0),
\]
where $X_{rob}(\gamma_0)$ denotes the set of optimal solutions of \eqref{eq:rob-optimization} with uncertainty set $P_{\gamma_0}$ and $x_{rob}(\gamma_0)\in X_{rob}(\gamma_0)$ is a selection. A hurdle to the practical adoption of robust formulations outside settings where extreme caution is indeed warranted is their conservatism \citep{roos2020reducing}.
By anticipating the worst-case scenario, performance may be poor in case the worst-case fails to realize.
Although its associated worst-case performance guarantee
\begin{equation}
  \label{eq:rob_perf_guarantee}
  p^\star\in P_{\gamma_0} \implies f(x_{rob}(\gamma_0), p^\star) \leq v_{wc}^\star(\gamma_0) \defn \min_{x\in X} v_{wc}(x,\gamma_0)
\end{equation}
is best possible by construction, its strength nevertheless hinges on how large the optimal worst-case cost $v_{wc}^\star(\gamma_0)$ precisely is.
When the minimal worst-case cost is very large the performance guarantee \eqref{eq:rob_perf_guarantee}  becomes vacuous.
The following example makes this discussion concrete.

\begin{examplebox}\label{ex:hub_location}
We want to find a hub location $\mu$ to serve customers distributed at locations following $\mb P^\star$ within a support set $\Xi$ where $\norm{\xi-\mu}^2$ corresponds to the frustration experienced by a customer at location $\xi$ when served from a hub at location $\mu$, i.e.,
\begin{equation}
  \label{eq:gen-weber}
  \min_{\mu} ~ \int \norm{\xi-\mu}^2 \d \P^\star(\xi). 
\end{equation}
Here the customers (used perhaps to instant gratification) experience superlinear frustration as service time increases.
For our facility problem to be well posed it is necessary that $\P^\star \in \mc P\defn \tset{\P}{\int \norm{\xi}^2\d \P(\xi)<\infty}.$
We remark that the unique optimal location in (\ref{eq:gen-weber}) is simply the mean $\mu^\star =\mu(\mb P^\star) \defn \int \xi \d \mb P^\star(\xi)$ which achieves cost equal to the variance $v^\star =v(\mb P^\star)\defn \int (\xi-\mu(\mb P^\star))^2 \d \mb P^\star(\xi)$. 

The distribution of future customers $\mb P^\star$ may however not be known precisely to the decision-maker.
Instead, consider a robust facility location formulation
\begin{equation}
  \label{eq:robust-weber-problem}
  \min_{\mu} \sup_{\mb P\in \mc P_{\gamma_0}} \int \norm{\xi-\mu}^2 \d \P(\xi).
\end{equation}
with respect to the Wasserstein ball
\(
  \mc P_{\gamma_0} \defn \set{\P\in\mc P}{W(\P, \P_0)\leq \gamma_0}
\)
for a judiciously chosen robustness parameter $\gamma_0$.
Denote by $\mu_{rob}(\gamma_0)$ the minimizer in the robust facility location problem (\ref{eq:robust-weber-problem}).
Depending on the value of $\gamma_0$, two extreme cases can be identified.
Clearly, if $\gamma_0=0$ we recover the unique nominal location $\mu_{rob}(\gamma_0) = \mu_{nom} = \int \xi \d\P_0(\xi)$ whereas at the other extreme for $\gamma_0\geq 2\norm{\Xi}$, the set $\mc P_{\gamma_0}$ contains all distributions supported on $\Xi$ and hence the robust optimal location moves to the unique center\footnotemark{} of $\Xi$. In the latter case,
\begin{equation}
  \label{eq:rob-to-center}
  \mu_{rob}(\gamma_0) = \arg \min_{\mu} \max_{\xi \in \Xi} \norm{\xi-\mu}^2 = \ctr(\Xi).
\end{equation}
In Figure \ref{fig:weber-instance} the path $\set{\mu_{rob}(\gamma)}{\gamma\geq 0}$ between the discussed extreme cases is depicted as a gray line.

The robust facility location formulation promises if $\mb P^\star\in \mc P_{\gamma_0}$ that 
\begin{equation}
  \label{eq:rob-perf-guarantee}
  \int \norm{\xi-\mu_{rob}(\gamma_0)}^2 \d \P^\star(\xi) \leq v^\star_{wc}(\gamma_0) \defn \min_{\mu} v_{wc}(\mu, \gamma_0) = \sup_{\P\in \mc P_{\gamma_0}} \int\norm{\xi-\mu_{rob}(\gamma_0)}^2 \d \P(\xi).
\end{equation}
The strength of a worst-case guarantee hinges on whether the minimum worst-case cost is reasonably small or at least bounded. Clearly, for large $\gamma_0\geq 2\norm{\Xi}$ we get $v^\star_{wc}(\gamma_0)=\norm{\Xi}^2$ and the worst-case guarantee \eqref{eq:rob-perf-guarantee} is vacuous when $\Xi$ is large. Unfortunately, from Lemma \ref{lemma:weber-lower-bound} in the Appendix it follows that also for small value of $\gamma_0>0$ we have
\(
  v^\star_{wc}(\gamma_0) \geq \min\left(\frac{\gamma_0}{2}, \norm{\Xi}\right) \norm{\Xi}
\)
limiting the robust formulation in \eqref{eq:robust-weber-problem} to bounded sets $\Xi$.
The precise problem instance and Julia code to reproduce Figures~\ref{fig:weber-instance}--\ref{fig:weber-costs} are available at \url{https://gitlab.com/vanparys/garo-example}.
\end{examplebox}
\footnotetext{For any bounded set $S$ we write $\norm{S} \defn \min_{s\in\conv(S)}\max_{s'\in S} \norm{s'-s}$ for its circumradius and $\ctr(S)$ for its corresponding unique circumcenter.}

\subsection{Regret Optimization}

\citet{savage1951theory} introduced the absolute regret formulation
\begin{equation}
  \label{eq:regret-formulation}
  \begin{array}{r@{\,}l}
    \min_{x\in X} & \max_{p\in P_{\gamma_0}} R(x, p)
  \end{array}
\end{equation}
in response to the apparent conservatism of the worst-case perspective advocated by \citet{wald1945statistical} where the regret is defined absolutely as $R(x, p)\defn f(x, p)-\min_{x'\in X} f(x', p)$. This approach was later studied for discrete problem structures in \cite{kouvelis2013robust}. Intuitively, the regret formulation \eqref{eq:regret-formulation} prefers decisions whose performance in hindsight can only be improved by a minimal amount.

By construction a regret optimal decision enjoys the performance bound
\[
  p^\star\in P_{\gamma_0} \implies f(x_{reg}(\gamma_0), p^\star) \leq \min_{x'\in X}f(x', p^\star) + R^\star
\]
where $x_{reg}(\gamma_0)$ and $R^\star$ denote a minimizer and minimum in Equation \eqref{eq:regret-formulation}, respectively.
As performance is measured here relative to an oracle decision with access to the unknown parameter $p^\star$, regret optimal decisions come with performance guarantees beyond merely the worst-case.
In some sense, whereas robust formulations emphasize worst-case performance, regret formulations care about performance in all scenarios equally.
By insisting on a minimal regret uniformly over all scenarios, regret formulations are inherently optimistic as the regret of a decision $x$ balances equally its worst-case performance $\max_{p\in P_{\gamma_0}} f(x, p)$ and its best-case performance $\min_{p\in P_{\gamma_0}} f(x, p)$.

Indeed, the optimal regret decision may offer even less protection against a worst-case scenario than a simple nominal decision, i.e.,
\begin{align*}
  v_{wc}(x_{nom}, \gamma) < & v_{wc}(x_{reg}(\gamma_0), \gamma)
\end{align*}
for any $\gamma\geq 0$.
A regret optimal decision may indeed forgo potential cost reductions in the predicted and worst-case scenarios to chase performance improvements in best-case scenarios, which seems to run counter to prudent decision making.
Furthermore, outside a few toy problems \citep{agarwal2022minimax, perakis2008regret}, notably the newsvendor problem, regret formulations become computationally intractable already when the nominal problem (\ref{eq:nom-optimization}) is a simple linear optimization problem \citep{averbakh2005complexity}.
We illustrate the optimism of regret formulations in our facility location example.

\begin{examplebox}
Denote the regret of hub location $\mu$ for a distribution of customers $\P$ as
\begin{align*}
  R(\mu, \P) \defn & \int \norm{\xi-\mu}^2 \d \P(\xi) - \min_{\mu'} \int \norm{\xi-\mu'}^2 \d \P(\xi)
                           = \norm{\textstyle\int \xi \d \P(\xi)-\mu}^2
\end{align*}
where the final equality follows from the bias variance decomposition.
It follows that the regret optimal facility location $\mu_{reg}(\gamma_0)= \arg\min_{\mu} \max_{\P\in \mc P_{\gamma_0}} R(\mu, \P)$ is simply the (unique) center of the convex set
\(
M(\gamma_0)\defn \set{\textstyle\int \xi \d \P(\xi)}{\P\in \mc P_{\gamma_0}}
\)
collecting the means of all distributions in $\mc P_{\gamma_0}$.
In the two extreme regimes discussed earlier, the robust and regret optimal formulations return the same optimal hub locations. Trivially, if $\gamma_0=0$ we have $\mu_{reg}(\gamma_0) = \mu_{nom} \defn \int \xi \d \P_0(\xi)$ whereas for a large enough radius  $\gamma_0\geq 2\norm{\Xi}$, the set $\mc P_{\gamma_0}$ contains all distributions supported on $\Xi$ and hence
\[
  \mu_{reg}(\gamma_0) = \ctr(\conv(\Xi)) = \ctr(\Xi)
\]
for any bounded set $\Xi$. In Figure \ref{fig:weber-instance} the path $\set{\mu_{reg}(\gamma)}{\gamma\geq 0}$ between the discussed extreme cases is depicted with a blue line.

From Lemma \ref{lemma:bound-M2} in the Appendix we have $M(\gamma_0)\subseteq B[\mu_{nom}, \gamma_0]$ (where $B[c,r]$ denotes the closed ball with center $c$ and radius $r$) and hence the worst-case regret remains well defined even for unbounded $\Xi$ and remains bounded by $\gamma_0^2$ in stark contrast to the worst-case cost which is unbounded. However, the regret optimal Weber point can be severely optimistic. Kantorovich-Rubinstein duality ensures that if $\Xi$ is unconstrained that $M(\gamma_0)=B[\mu_{nom}, \gamma_0]$ and hence the regret optimal location $\mu_{reg}(\gamma_0) = \mu_{nom}$ coincides with the nominal location offering no additional protection.
In case $\Xi$ is bounded the situation can become even more dire.
We probe the sensitivity of a location $\mu$ by considering the difference
\begin{align*}
  A(\mu, \gamma) \defn & v_{wc}(\mu, \gamma) - v^\star_{wc}(\gamma) \\
  = &\textstyle\sup_{W(\mb P, \mb P_0)\leq \gamma} \int \norm{\xi-\mu}^2 \d \P(\xi) -   \min_{\mu'} \sup_{W(\mb P', \mb P_0)\leq \gamma} \int \norm{\xi-\mu'}^2 \d \P'(\xi).
\end{align*}
That is, the amount by which our worst-case cost is larger when the demand distribution can deviate from the predicted distribution by an amount $\gamma$ compared to an oracle robust solution.
We denote $A$ as the adversarial regret as it measures our loss of performance when the distributional parameter $\mb P$ is the action of an adversarial player.

To make the discussion tangible consider the problem setup depicted in Figure \ref{fig:weber-instance}.
We see in Figure \ref{fig:weber-costs} that the nominal facility location $\mu_{nom} = \mu^\star(\mb P_0)$ achieves the oracle cost at $\gamma=0$ but quickly deteriorates as $\gamma$ increases (essentially depicting the curse of optimization). The robust facility location $\mu_{rob}(\gamma_0)$ achieves the oracle cost at $\gamma_0=1$ but even for smaller values $\gamma\geq 0.25$ it achieves the oracle cost as $\mu_{rob}(\gamma_0) = \ctr(\Xi)$ for any $\gamma_0\geq 0.25$.
Observe however that
\begin{equation}
  \label{eq:optimistic}
  A(\mu_{nom}, \gamma)  < A(\mu_{reg}(\gamma_0),\gamma) \quad \forall \gamma\geq 0
\end{equation}
indicating that the regret optimal facility location manages to suffer both larger nominal as well as larger worst-case costs than the nominal facility location.
\end{examplebox}

\begin{figure}
  \centering
\begin{tikzpicture}[spy using outlines={circle, magnification=4.5, size=5cm, connect spies}]
\begin{axis}[
  width=11cm,    
  height=8cm,    
  view={60}{60},
  axis lines=middle,
  zmin=0,
  zmax=1,
  xmin=0,
  xmax=2.4,
  ymin=0,
  ymax=1.8,
  xtick={0, 1, 2},
  ytick={0, 0.5, 1, 1.5},
  zticklabels=\empty,
  y tick label style={
  xshift=0pt,
  yshift=3pt,  
  anchor=base   
  },
    ylabel style={
    at={(axis description cs:0.83,0.72)}, 
    rotate=0,                             
    anchor=center
  }
  clip=true
  ]

  \foreach \x in {0,0.5,...,2.4} {
    \addplot3[forget plot, black!20, thin] coordinates {(\x,0,0) (\x,1.8,0)};
  }
  \foreach \y in {0,0.5,...,1.8} {
    \addplot3[forget plot, black!20, thin] coordinates {(0,\y,0) (2.4,\y,0)};
  }

  \addplot3  [draw=black, fill=black!15, opacity=0.5] table [x=xi_1,y=xi_2,z expr=0, col sep=comma] {ch-Xi.csv} --cycle;

  \addplot3  [draw=orange, fill=orange, only marks, mark size=0.1pt] table [x=xi_1,y=xi_2,z expr=0, col sep=comma] {mean.csv};

  \addplot3  [draw=black, fill=black, only marks, mark size=0.1pt, mark=*] table [x=xi_1,y=xi_2,z expr=0, col sep=comma] {robust_mean.csv};
  
  \addplot3  [draw=blue, fill=blue, only marks, mark size=0.1pt, mark=*] table [x=xi_1,y=xi_2,z expr=0, col sep=comma] {min-ball-M_r.csv};

  \addplot3  [forget plot, draw=black, fill=black, only marks, mark size=0.1pt] table [x=xi_1,y=xi_2,z expr=0, col sep=comma] {instance.csv};

  \addplot3[forget plot, draw=red,thick,
  quiver={
  u=0,
  v=0,
  w=\thisrow{P}
  },
    -stealth
    ]
    table[
    x=xi_1,
    y=xi_2,
    z=zero,
    col sep=comma
    ] {instance.csv};

    \addplot3  [draw=blue, fill=blue!10, opacity=0.5] table [x=xi_1,y=xi_2,z expr=0, col sep=comma] {M_r.csv} --cycle;

    \addplot3  [draw=black!50, line width=0.2pt] table [x=xi_1,y=xi_2,z expr=0, col sep=comma] {robust_hubs.csv};
    \addplot3  [draw=blue!50, line width=0.2pt] table [x=xi_1,y=xi_2,z expr=0, col sep=comma] {regret_hubs.csv};
  
    \addplot3  [draw=yellow, fill=yellow, only marks, mark size=0.1pt] table [x=xi_1,y=xi_2,z expr=0, col sep=comma] {sat_mean.csv};
    \addplot3  [draw=green, fill=green, only marks, mark size=0.5pt, mark=star] table [x=xi_1,y=xi_2,z expr=0, col sep=comma] {glob_robust_mean.csv};

    \addplot3  [forget plot, draw=blue, densely dotted, thick] table [x=xi_1,y=xi_2,z=P, col sep=comma] {circle2.csv} --cycle;
    
\end{axis}
\spy on (5.58,2.7) in node [left] at (-1,2.5);
\end{tikzpicture}
  \caption{Facility location instance: customer locations $\Xi$ (red marks) with predicted distribution $\mb P_0$ (red arrows). The path of robust hub locations $\{\mu_{rob}(\gamma)\}_{\gamma\geq 0}$ (gray) moves from the nominal location $\mu_{nom}$ (orange) to $\ctr(\Xi)$ (black) as $\gamma_0$ grows. The set $M(\gamma_0)$ of attainable means under $\mc P_{\gamma_0}$ (blue shaded polytope) and the regret-optimal location $\mu_{reg}=\ctr(M(\gamma_0))$ (blue dot) are also shown, together with the satisficing location $\mu_{sat}$ (yellow). The GARO solution $\mu_{garo}$ (green star) is discussed in Section~\ref{sec:robust_decisions_wild_predictions}.}
  \label{fig:weber-instance}
\end{figure}
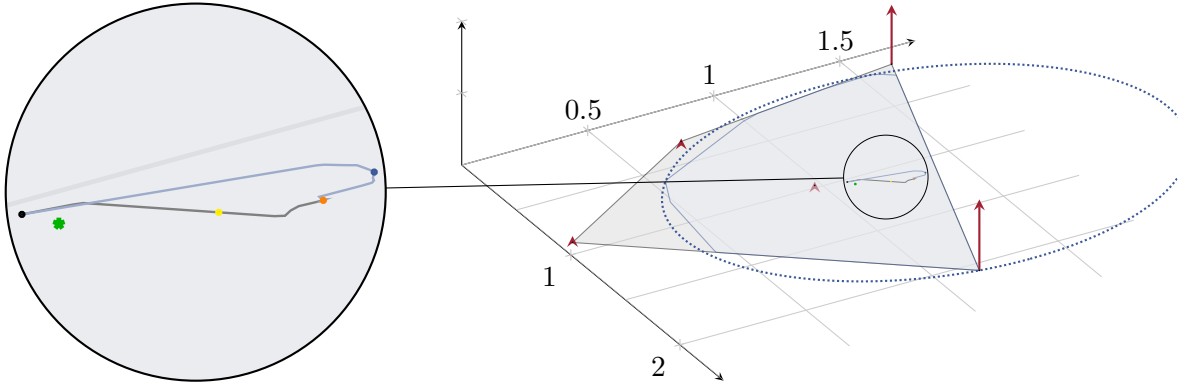

\begin{figure}
    \centering
\begin{tikzpicture}[spy using outlines={circle, magnification=3, size=4cm, connect spies}]
\begin{axis}[
   tick label style={/pgf/number format/fixed},
  width=15cm,    
  height=8cm,    
  axis lines=middle,
  title={Adversarial Regret $A(\cdot, {\gamma})$},
  xmin=0,
  ymin=0,
  ymax=0.07,
  xmax=1.2,
  legend pos=south east,
  legend cell align={left},
  ]

  \addplot [draw=orange] table [x=gamma,y=nom,col sep=comma] {costs.csv};
  \addlegendentry{$\mu_{nom}$}

  \addplot [draw=black] table [x=gamma,y=rob,col sep=comma] {costs.csv};
  \addlegendentry{$\mu_{rob}$}
  
  \addplot [draw=blue] table [x=gamma,y=reg,col sep=comma] {costs.csv};
  \addlegendentry{$\mu_{reg}$}

  \addplot [draw=yellow] table [x=gamma,y=sat,col sep=comma] {costs.csv};
  \addlegendentry{$\mu_{sat}$}
  
  \addplot [draw=green] table [x=gamma,y=grob,col sep=comma] {costs.csv};
  \addlegendentry{$\mu_{garo}$}

  \draw [densely dotted] (axis cs:1, 0) -- (axis cs:1, 0.2);
  \node [above right] at (axis cs:1,0.045) {$\gamma_0$};

    \draw [densely dotted] (axis cs:0.1, 0) -- (axis cs:0.1, 0.2);
  \node [above right] at (axis cs:0.1,0.045) {$\gamma'_0$};

  \addplot [draw=green, densely dotted] table [x=gamma,y=guarantee_garo,col sep=comma] {costs.csv};
  \addlegendentry{$\alpha_{garo}$};
  
\end{axis}
\end{tikzpicture}
    \caption{Adversarial regret $A(\cdot,\gamma)$ as a function of the Wasserstein perturbation level $\gamma$ for the five hub locations shown in Figure~\ref{fig:weber-instance}. The regret-optimal location $\mu_{reg}$ (blue) is so optimistic that its adversarial regret exceeds that of the nominal location $\mu_{nom}$ (orange) for all $\gamma\geq 0$, confirming~\eqref{eq:optimistic}. The robust location $\mu_{rob}$ (black) achieves zero adversarial regret at $\gamma_0$. The GARO location $\mu_{garo}$ (green) maintains uniformly small adversarial regret across all $\gamma$, bounded by the guarantee $\alpha_{garo}$ (dotted green line).}
    \label{fig:weber-costs} 
\end{figure}
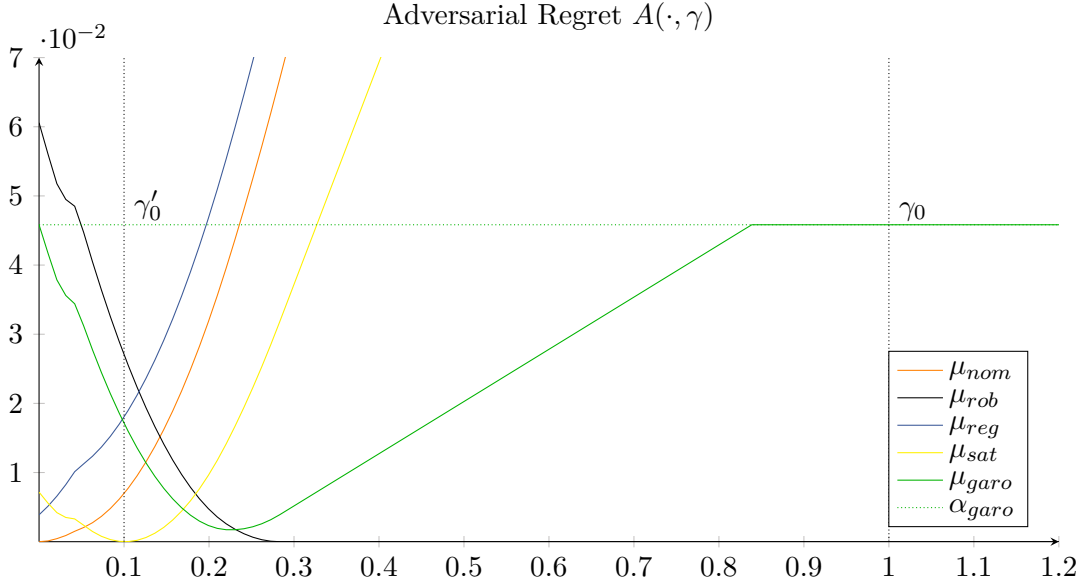

\subsection{Satisficing}
\label{sec:satisficing}

In both the robust and the regret formulation discussed earlier, the set $P_{\gamma_0}$ characterizes those parameters against which the decision maker wishes to anticipate.
When the decision-maker has access to classical predictions satisfying the guarantee (\ref{def:classical-prediction}) the choice suggested in Equation \eqref{eq:ambiguity-set} is both natural and adequate.
With only access to adaptive or wild predictions, the set $P_{\gamma_0}$ must be taken as the set of all possible parameter values $P$, rendering the absolute performance guarantees that robust and regret formulations aspire to simply unattainable.

We now describe a decision formulation introduced by \citet{long2023robust} that enjoys performance guarantees relative to the performance of the prediction itself.
Consider indeed the satisficing formulation
\begin{equation}
  \label{eq:rob-grc}
  \begin{array}{rl}
    \min_{x\in X, \, \alpha\geq 0} & \alpha \\
    \st & f(x, p) \leq f_{0} + \alpha d(p_0, p) \quad \forall p\in P.
  \end{array}
\end{equation}
Here $f_{0}$ denotes a target cost level which the decision-maker deems \textit{satisfactory}.
Let here now $P_\infty=\set{p\in P}{d(p_0, p)<\infty}$.
For the satisficing formulation \eqref{eq:rob-grc} to be feasible and nontrivial we require that
\begin{equation}
  \label{eq:sat-conditions}
  v_{wc}^\star(0) = \min_{x\in X} f(x, p_0) \leq f_0\leq \min_{x\in X} \max_{p\in P_\infty} f(x, p) = v_{wc}^\star(\textstyle\max_{p\in P_\infty} d(p_0, p))
\end{equation}
as we assume here that $p_0\in P$.
We remark that for all $p\not\in P_\infty$ the satisficing constraint in \eqref{eq:rob-grc} is void, since $d(p_0, p)=\infty$ renders the right-hand side infinite.
This distinction arises because $d$ is extended-valued; when $d$ takes only finite values, $P_\infty = P$ and the remark is moot.

Let $\alpha_{sat}$ be the optimal value, $X_{sat}$ the set of optimal solutions, and $x_{sat}\in X_{sat}$ a selection from Equation~(\ref{eq:rob-grc}).
Intuitively, we are looking for decisions whose performance $f(x_{sat}, p^\star)$ meets the target $f_{0}$ if the prediction was indeed correct, i.e., in the event $p_0=p^\star$, and degrades at a minimal rate $\alpha_{sat}$ relative to how wrong the prediction was in hindsight as measured by $d(p_0, p^\star)$.
Satisficing formulations deliver guarantees of the form
\begin{equation}
  \label{eq:satisficing-guarantee}
  f(x_{sat}, p^\star) \leq f_{0} + \alpha_{sat} \gamma^\star
\end{equation}
which allow wild predictors to offer cost guarantees relative to the actual prediction error $d(p_0, p^\star)=\gamma^\star$ rather than a worst-case radius $\gamma_0$ as in robust formulations. 
Additionally, they work well with adaptive predictions which satisfy \eqref{def:adaptive-prediction} and combined with \eqref{eq:satisficing-guarantee} result in adaptive guarantee of the form
\[
  \Prob{}\left[f(x_{sat}, p^\star) \leq f_{0} + \alpha_{sat}  \gamma_N(\delta; \kappa) \right]\geq 1- \delta.
\]
Furthermore, if the objective function $f$ is convex-concave and the constraint and parameter sets $X$ and $P_\infty$ are convex, satisficing formulations are under mild technical conditions amenable to tractable algorithms \citep{long2023robust}.

\begin{examplebox}
In the context of our facility location problem the satisficing formulation in Equation \eqref{eq:rob-grc} reduces to 
\begin{equation}
  \label{eq:weber-satisficing}
  \begin{array}{rl}
    \min_{\mu, \,\alpha\geq 0} & \alpha \\
    \st & \int \norm{\xi-\mu}^2 \d \P(\xi) \leq f_{0} + \alpha W(\mb P_0, \mb P) \quad \forall \mb P \in \mc P
  \end{array}
\end{equation}
and is recognized as the satisficing formulation proposed by \citet{long2023robust}. Here, we choose the satisfaction level $v_{wc}^\star(0) < f_0 \defn 1.04 v_{wc}^\star(0) < v_{wc}^\star(\sup_{\mb P\in \mc P} W(\mb P_0, \mb P))$ to be 4\% suboptimal in case $\mb P^\star=\mb P_0$ occurs.
We mark the satisficing hub location $\mu_{sat}$ with a yellow mark in Figure \ref{fig:weber-instance} and depict its adversarial regret in Figure \ref{fig:weber-costs}. Proposition \ref{prop:sat-eq-rob} implies that $\mu_{sat} = \mu_{rob}(\gamma_0')$ with $\gamma'_0=0.1$.

Unfortunately, a straightforward consequence of the lower bound in Lemma \ref{lemma:weber-lower-bound} in the Appendix is that we need
\(
  \alpha \geq \tfrac{(\norm{\Xi}^2-f_{0})}{(2\norm{\Xi})}
\)
for the satisficing formulation to be feasible.
Hence, for $\Xi$ unbounded, the constraint in the satisficing formulation \eqref{eq:weber-satisficing} is not feasible in stark contrast to a regret formulation.
\end{examplebox}

Hence, satisficing formulations in Equation \eqref{eq:rob-grc} may become infeasible and guarantees in the form \eqref{eq:satisficing-guarantee} can be prohibitively restrictive. Indeed, for the satisficing formulation \eqref{eq:rob-grc} to be feasible it is necessary that the oracle cost is finite and satisfies $v_{wc}^\star(\gamma) = \mc O(\gamma)$. Furthermore, the satisficing guarantee in Equation \eqref{eq:satisficing-guarantee} is relative to an arbitrary satisfaction level $f_0$. In fact, we show in Proposition \ref{prop:sat-eq-rob} in the Appendix that a satisficing solution can sometimes be found as a robust solution for a robustness radius $\gamma_0'$ which is determined by $f_0$.

We remark that the assumption in Proposition \ref{prop:sat-eq-rob} that the cost function $f(x, p)$ is strictly concave in $p$ can be
relaxed, resulting in a slightly weaker conclusion that there always exists a satisficing solution
admitting a robust interpretation; see Appendix~\ref{sec:supporting-sat} for details including
the proof of Proposition~\ref{prop:sat-eq-rob} and supporting examples.

\begin{theorem}
  \label{prop:general-convex}
  Let $P_\infty$ be a compact convex set admitting a continuous strictly convex function $\psi :
P_\infty \to \Re_+$. Let $f(x,p)$ be jointly continuous, convex in $x$, and concave in $p$ for every $x \in X$, with $X$ compact convex. Let $p \mapsto d(p_0,
p)$ be lower semicontinuous and convex. Assume \eqref{eq:rob-grc} is feasible and satisfies condition
\eqref{eq:sat-conditions}. Then there exists an optimal solution $x_{sat}$ of
\eqref{eq:rob-grc} such that
\[
    x_{sat} \in \textstyle\cup_{\gamma \in [0, \gamma_{\max}]} X_{rob}(\gamma),
    \quad \gamma_{\max} := \textstyle\max_{p \in P_\infty} d(p_0, p).
\]
\end{theorem}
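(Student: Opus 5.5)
The plan is a Lagrangian duality argument. Let $\alpha_{sat}$ be the optimal value of \eqref{eq:rob-grc}. Define
\[
g(x) \defn \max_{p\in P_\infty} \big[f(x,p) - \alpha_{sat}\, d(p_0,p)\big], \qquad h(p) \defn \min_{x\in X} f(x,p) - \alpha_{sat}\, d(p_0,p).
\]
Then $X_{sat}=\set{x\in X}{g(x)\le f_0}$, because constraints with $p\notin P_\infty$ are void. I will take $x^\star\in\arg\min_X g$ and show that it is both satisficing-optimal and robust-optimal for a suitable radius $\gamma\in[0,\gamma_{\max}]$.

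\textbf{Step 1 (minimax).} The map $p\mapsto f(x,p)-\alpha_{sat} d(p_0,p)$ is concave and upper semicontinuous, since $d(p_0,\cdot)$ is convex and lsc. The map $x\mapsto$ the same expression is convex and continuous. Both $X$ and $P_\infty$ are compact convex. Sion's minimax theorem therefore gives
\[
\min_X g = \max_{P_\infty} h.
\]
Compactness together with semicontinuity makes both optima attained. I choose a maximizer $p^\star$ of $h$ and set $\gamma\defn d(p_0,p^\star)\in[0,\gamma_{\max}]$.

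\textbf{Step 2 (value identification).} I will show $\min_X g = f_0$ whenever $\alpha_{sat}>0$. Since $\alpha_{sat}$ is feasible, $\min_X g\le f_0$. Suppose instead $\min_X g<f_0$. The function $d(p_0,\cdot)$ is bounded by $\gamma_{\max}$ on $P_\infty$, so
\[
g_{\alpha'}\le g_{\alpha_{sat}}+(\alpha_{sat}-\alpha')\gamma_{\max}.
\]
Hence a slightly smaller $\alpha'$ would still be feasible, contradicting optimality of $\alpha_{sat}$.

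The case $\alpha_{sat}=0$ is treated separately. There $g(x)=v_{wc}(x,\gamma_{\max})$, so $x^\star\in X_{rob}(\gamma_{\max})$. Moreover $g(x^\star)\le f_0$, so $x^\star$ is satisficing-optimal and we are done with $\gamma=\gamma_{\max}$.

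\textbf{Step 3 (robust optimality at $\gamma$).} This is the weak-duality sandwich. Take any $p\in P_{\gamma}$, so $d(p_0,p)\le\gamma$. Then
\[
f(x^\star,p)\le g(x^\star)+\alpha_{sat} d(p_0,p)\le f_0+\alpha_{sat}\gamma,
\]
hence $v_{wc}(x^\star,\gamma)\le f_0+\alpha_{sat}\gamma$. In the other direction, $p^\star\in P_\gamma$, so for every $x\in X$,
\[
v_{wc}(x,\gamma)\ge f(x,p^\star)\ge \min_{X} f(\cdot,p^\star)=h(p^\star)+\alpha_{sat}\gamma = f_0+\alpha_{sat}\gamma .
\]
Combining the two bounds gives $v_{wc}(x^\star,\gamma)=v^\star_{wc}(\gamma)$, i.e.\ $x^\star\in X_{rob}(\gamma)$. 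Since also $g(x^\star)=f_0$, the point $x^\star$ is an optimal solution of \eqref{eq:rob-grc}, which proves the claim.

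\textbf{Expected obstacle.} The delicate points are the semicontinuity and attainment details: the maximum in $g$ and in $\gamma_{\max}$, upper semicontinuity of $h$, and Sion's hypotheses under an extended-valued $d$ restricted to $P_\infty$.

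The strictly convex $\psi$ enters only as a backup. I would not need it if Sion applies directly. If attainment or the choice of $p^\star$ becomes problematic, the first fallback is to perturb the problem by $\varepsilon\psi(p)$, which makes the inner maximizer unique and the selection continuous, and then let $\varepsilon\downarrow0$. Compactness of $X$ and $[0,\gamma_{\max}]$, together with closedness of $\set{(x,\gamma)}{x\in X_{rob}(\gamma)}$ (from joint continuity of $f$), would then pass the conclusion to the limit.
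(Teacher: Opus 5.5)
Your proof is correct, and it takes a genuinely different route from the paper.

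\textbf{How the two routes differ.} The paper never dualizes in $p$. It perturbs $f$ to $f-\varepsilon\psi$ to make it strictly concave and then invokes Proposition~\ref{prop:sat-eq-rob}. That proposition uses a Lagrangian in $\gamma$-space, whose multiplier measure on $\Re_+$ collapses to a Dirac at some $\gamma^\varepsilon$ because $\gamma\mapsto v^\varepsilon_{wc}(x,\gamma)$ is strictly concave. The paper then extracts a limit point as $\varepsilon_k\downarrow 0$. This uses the uniform bound $\abs{v^\varepsilon_{wc}-v_{wc}}\le\varepsilon M$ and Berge's maximum theorem, with convexity of $d(p_0,\cdot)$ supplying lower hemicontinuity of $\gamma\mapsto P_\gamma$.

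You instead freeze the multiplier $\alpha_{sat}$ on $d$. Sion then gives you a saddle point $(x^\star,p^\star)$ of $f-\alpha_{sat}d(p_0,\cdot)$ on $X\times P_\infty$. Your Step~3 observes that this restricts to a saddle point of $f$ on $X\times P_{d(p_0,p^\star)}$. This makes the radius explicit and avoids any limiting argument. It also shows that the hypothesis on $\psi$ is superfluous, which matters precisely in the infinite-dimensional setting the paper flags.

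\textbf{What your route buys beyond the statement.} It proves more than the theorem claims. For $\alpha_{sat}>0$, your Step~2 gives $X_{sat}=\arg\min_X g$, and both bounds of Step~3 hold for every such $x$. Hence $X_{sat}\subseteq X_{rob}(d(p_0,p^\star))$ for a single radius. For $\alpha_{sat}=0$, condition \eqref{eq:sat-conditions} forces $X_{sat}=X_{rob}(\gamma_{\max})$. So under the hypotheses of the theorem, the full inclusion of Proposition~\ref{prop:sat-eq-rob} survives without strict concavity. Example~\ref{ex:sat-not-rob} evades this only because $P_\infty=\Re$ there, which is exactly where your Step~2 breaks.

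\textbf{What you can drop.} For the existence claim alone, you need neither Step~2 nor the case split. Insert $g(x^\star)=h(p^\star)$ from Sion directly into Step~3 to get $x^\star\in X_{rob}(d(p_0,p^\star))$. Then $g(x^\star)\le g(x_{sat})\le f_0$ gives optimality. The $\psi$-perturbation fallback in your last paragraph can also be dropped.

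\textbf{Two details to state explicitly.}
\begin{itemize}
\item $\alpha_{sat}$ is attained. Each constraint with $p\in P_\infty$ is continuous in $(x,\alpha)$, so the feasible set is closed, and $\alpha$ can be capped by any feasible value.
\item $\alpha_{sat}>0$ forces $\gamma_{\max}>0$. Step~2 needs this when it divides by $\gamma_{\max}$ to lower $\alpha$.
\end{itemize}
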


The assumption that $P_\infty$ admits a continuous strictly convex function is mild: in finite dimensions it is always satisfied, e.g., by $\psi(p) = \|p\|^2$. In infinite-dimensional settings, such as when $P_\infty$ is a set of probability distributions, the condition requires separate verification.

In summary, each of the three standard formulations exhibits a fundamental weakness when the prediction error level $\gamma$ is unknown. Robust formulations yield vacuous guarantees whenever the uncertainty set must be taken large. Regret formulations are inherently optimistic and can produce decisions offering less protection than the nominal solution. Satisficing formulations, while offering prediction-relative guarantees, are under mild convexity assumptions nothing but robust formulations in disguise.

\section{Robust Decisions with Wild Predictions}\label{sec:robust_decisions_wild_predictions}

When we have access to wild predictions, standard performance guarantees can generally not be achieved.
Indeed, as wild predictions can be arbitrarily bad or even have malicious intent, a robust formulation ignoring the predictions altogether will provide only vacuous worst-case guarantees. In what follows we introduce a decision framework that addresses the shortcomings of the classical robust, regret, and satisficing formulations identified in Section~\ref{sec:uncert-optim}. The main idea is to control adversarial regret uniformly over all prediction error levels.

To make this rigorous, we introduce the univariate family of non-decreasing sets $P_\gamma$, e.g., $P_\gamma = \set{p\in P}{d(p_0, p)\leq \gamma}$, for $\gamma \in  \Gamma$.
We do assume $X$ and $P_\gamma$ to be compact for all $\gamma\in\Gamma$. We note that $p$ may represent either a finite-dimensional parameter vector, recovering standard robust optimization \citep{ben2002robust}, or a probability distribution with a suitable distance $d$ such as the Wasserstein metric, recovering distributionally robust optimization \citep{delage2010distributionally, esfahani2015data}; the running facility location example falls in the latter setting.
Throughout, we write $\min$ and $\max$ when optimizing over $X$ or $P_\gamma$, where attainment is guaranteed by compactness of these sets and continuity of $f$.
We write $\inf$ and $\sup$ when optimizing over infinite-dimensional spaces such as sets of probability distributions, where attainment requires a separate argument. Inspired by the discussion in Section \ref{sec:uncert-optim} we introduce the adversarial regret
\[
  A(x, \gamma) \defn \max_{p\in P_\gamma} f(x, p) - \min_{x' \in X}\max_{p'\in P_\gamma} f(x', p').
\]
That is, the additional cost we may incur if the parameter $p$ is chosen adversarially in $P_\gamma$ compared to an oracle robust solution with access to $\gamma$. We remark that classical regret is larger than the adversarial regret, i.e.,
\(
A(x, \gamma) \leq \max_{p\in P_\gamma} R(x, p)
\)
as the former also takes into account how much performance is left on the table when $p$ would be chosen beneficially.
It was precisely this aspect which rendered regret optimization potentially optimistic as discussed in Section \ref{sec:uncert-optim}.

As mentioned earlier, the defining characteristic of decision-making with wild predictions is that parameter $\gamma$ is fundamentally unknown and hence at a minimum we would like to have that our decision is weakly minimal in
\begin{equation}
  \label{eq:multiobjective}
  \min_{x\in X}~ (A(x, \gamma))_{\gamma\in \Gamma}.
\end{equation}
That is, a decision $x^\star$ for which there is no other decision $y\in X$ so that $A(y,\gamma)< A(x^\star, \gamma)$ for all $\gamma\in \Gamma$.
As its name suggests, though, this is a rather weak requirement.
We observe indeed that the nominal and robust solution as discussed in Section \ref{sec:uncert-optim} are weakly minimal as they minimize $x\mapsto A(x, 0)$ and $x\mapsto A(x, \gamma_0)$, respectively.
We point out that as the facility location example in Figures \ref{fig:weber-instance} and \ref{fig:weber-costs} illustrates, regret optimal solutions nevertheless can fail to be weakly optimal in problem \eqref{eq:multiobjective}.
Finally, under Proposition~\ref{prop:sat-eq-rob}'s strict concavity assumption satisficing solutions are weakly minimal as they minimize $x\mapsto A(x,\gamma_0')$ for some $\gamma_0'\geq 0$; more generally, a satisficing solution is weakly minimal whenever $\Gamma$ is compact and $\gamma\mapsto v_{wc}(x,\gamma)$ is continuous. See Proposition~\ref{prop:sat-weakly-minimal} in Appendix~\ref{sec:supporting-sat} for the formal statement and proof.

Rather than seeking a decision that merely minimizes $A(x,\gamma)$ at a single fixed $\gamma$ --- as robust and satisficing formulations effectively do --- we propose to bound the adversarial regret uniformly over all $\gamma\in\Gamma$, leading to the following class of decision formulations:
\begin{equation}
  \label{eq:glob-rob-regret}\tag{GARO}
  \begin{array}{r@{~}l}
    \displaystyle\min_{x\in X, \, \alpha \ge 0} & \alpha\\[0.5em]
    \st & \displaystyle A(x, \gamma) = \max_{p\in P_\gamma} f(x, p) - \min_{x' \in X}\max_{p'\in P_\gamma} f(x', p') \leq \alpha \phi(\gamma) \quad \forall \gamma\in \Gamma.
  \end{array}
\end{equation}
Here a nondecreasing rate function $\phi:\Gamma\to\Re_+$ controls the amount of adversarial regret our decision suffers as a function of the prediction error $d(p_0, p^\star)$.
We now list the main benefits of our formulation explicitly.

\paragraph{Weak Optimality}

Unlike a regret formulation which can yield decisions which are even more optimistic than a nominal formulation, the proposed formulation is guaranteed by construction to produce weakly minimal solutions in \eqref{eq:multiobjective}.

\paragraph{Performance Guarantees}
Unlike nominal or robust formulations, GARO simultaneously guarantees performance across all error levels $\gamma\in\Gamma$. That is, any minimizer $(x_{garo}, \alpha_{garo})$ of~\ref{eq:glob-rob-regret} satisfies
\begin{equation}
  \label{eq:gror-guarantee}
  v_{wc}(x_{garo}, \gamma) \leq v_{wc}^\star(\gamma) + \alpha_{garo}\phi(\gamma) \quad \forall \gamma\in\Gamma.
\end{equation}
With the constant rate $\phi(\gamma)=1$ this becomes the \emph{absolute} guarantee
\begin{equation}
  \label{eq:gror-guarantee-absolute}
  v_{wc}(x_{garo}, \gamma) - v_{wc}^\star(\gamma) \leq \alpha_{garo} \quad \forall \gamma\in\Gamma,
\end{equation}
bounding the gap to the oracle in absolute terms. With $\phi(\gamma)=v^\star_{wc}(\gamma)$ it becomes the \emph{relative} guarantee
\begin{equation}
  \label{eq:gror-guarantee-relative}
  \frac{v_{wc}(x_{garo}, \gamma) - v_{wc}^\star(\gamma)}{v^\star_{wc}(\gamma)} \leq \alpha_{garo} \quad \forall \gamma\in\Gamma,
\end{equation}
bounding the gap as a fraction of the oracle cost. In both cases $x_{garo}$ tracks the oracle simultaneously at the typical error level $\gamma_{\min}$ and at the worst-case level $\gamma_{\max}$, without committing to any particular level upfront.

\paragraph{Choosing the Rate Function}
The constant rate $\phi(\gamma)=1$ is the natural choice when the oracle cost may be zero or lacks a natural scale; the oracle-cost rate $\phi(\gamma)=v^\star_{wc}(\gamma)$ is preferable when the oracle cost grows with $\gamma$ and an absolute bound would become vacuously large. These are the two principal choices a practitioner faces: in either case a single instance of~\ref{eq:glob-rob-regret} is solved and $\alpha_{garo}$ directly quantifies how closely the decision tracks the oracle benchmark. 

We emphasize that \ref{eq:glob-rob-regret} makes no a priori claim that $\alpha_{garo}$ will be small. Its guarantee is one of ``mere'' optimality; if there is a decision with good absolute or relative performance guarantees then \ref{eq:glob-rob-regret} will find it. A large $\alpha_{garo}$ signals that the problem simply does not admit a decision that uniformly tracks the oracle across $\Gamma$. A concrete example in which $\alpha_{garo}$ is provably small is the adaptive prediction setting of Section~\ref{sec:adaptive-predictions} where the relative rate function yields $\alpha_{garo}\leq 1$, recovering Lepskii's classical factor-of-two guarantee. However, when a standard choice fails to yield a small $\alpha_{garo}$, a problem-dependent rate can salvage the situation as we illustrate in the running example below.

\begin{examplebox}
  \label{example:rate-function}
  With $\phi(\gamma)=1$ the \ref{eq:glob-rob-regret}  solution $\mu_{garo}$ is depicted as a green star in Figure~\ref{fig:weber-instance}. It coincides neither with any robust location on the path $\{\mu_{rob}(\gamma):\gamma\in\Gamma\}$ (gray) nor with any regret-optimal location on the path $\{\mu_{reg}(\gamma):\gamma\in\Gamma\}$ (blue), confirming that \ref{eq:glob-rob-regret}  produces a genuinely distinct decision. Figure~\ref{fig:weber-costs} shows that $\mu_{garo}$ achieves a strictly smaller adversarial regret $A(\mu_{garo},\gamma)$ than every other proposed hub location across the entire range of $\gamma$, bounded uniformly by the guarantee $\alpha_{garo}\approx\num{4.58e-2}$.

  For the bounded instance above in Figure~\ref{fig:weber-instance}, the absolute rate $\phi(\gamma)=1$ yields a reasonably small $\alpha_{garo}$, but as the support $\Xi$ expands the absolute guarantee deteriorates. From Lemma~\ref{lemma:weber-lower-bound} the oracle cost satisfies $v^\star_{wc}(\gamma)\geq \min(\gamma/2,\norm{\Xi})\norm{\Xi}$, so the oracle itself grows with the circumradius $\norm{\Xi}$; maintaining a uniformly small absolute gap $A(\mu,\gamma)\leq\alpha_{garo}$ over all $\gamma\in\Gamma$ therefore forces $\alpha_{garo}$ to grow with $\norm{\Xi}$.

  In the limit $\Xi=\Re_+$ with $\mb P_0=\delta_0$ the absolute rate becomes outright infeasible: direct computation gives $A(\mu,\gamma)=(\mu-\gamma)^2$, which diverges as $\gamma\to\infty$ for any fixed $\mu$. Similarly, as we pointed out in Lemma \ref{lemma:weber-lower-bound}, we have $v^\star_{wc}(\gamma)=\infty$ for any $\gamma>0$ and hence considering a relative rate function is not a sensible option either. Note that the two failure modes are of opposite character: absolute infeasibility ($\alpha_{garo}\to\infty$) signals that no decision uniformly tracks the oracle, while relative triviality ($\alpha_{garo}\to 0$ for every~$\mu$) means all decisions do so vacuously. We can salvage the situation by considering instead $\phi(\gamma)=(1+\gamma)^q$ with $q\geq 2$: the resulting guarantee $A(\mu,\gamma)\leq\alpha_{garo}(1+\gamma)^q$ is meaningful for practically relevant prediction errors (small $\gamma$, where $(1+\gamma)^q\approx 1$) and degrades gracefully as predictions become increasingly wild, a reasonable price to pay when $\Xi$ is unbounded. For $q=2$ the solution is $\alpha_{garo}=1$ and $\mu_{garo}=[0,1]$, while for $q>2$ the unique minimizer satisfies $(\mu_{garo})^2(1+\mu_{garo})^{q-2}=\tfrac{4(q-2)^{q-2}}{q^q}$. 
\end{examplebox}

The choice of rate function $\phi$ encodes a design judgment about how much adversarial regret is acceptable as the prediction error grows.
Together with the distance function $d$ which it shares with the uncertain optimization formulations in Section \ref{sec:uncert-optim}, the rate function shapes the nature of the GARO guarantee \eqref{eq:gror-guarantee} and its appropriate specification should reflect the structure of the application at hand.

\section{Performance Guarantees Across Prediction Regimes}

The three prediction regimes introduced in Section~\ref{sec:from-classical-wild} (classical, adaptive, and wild) call for different analytical tools, but GARO delivers meaningful guarantees in each. The following subsections establish these guarantees in turn, completing the picture begun in Section~\ref{sec:robust_decisions_wild_predictions}.

\subsection{Classical Predictions}

Classical predictions satisfy the guarantee \eqref{def:classical-prediction}. In case the event $d(p_0, p^\star)\leq \gamma_N(\delta)$ occurs it is straightforward to conclude that
\(
  f(x_{rob}(\gamma_N(\delta)), p^\star) \leq \max_{d(p_0, p)\leq \gamma_N(\delta)} f(x_{rob}(\gamma_N(\delta)), p) = v_{wc}^\star(\gamma_N(\delta)).
\)
It follows that 
\begin{equation}
  \label{eq:classical-oos-guarantee}
  \Prob\left[f(x_{rob}(\gamma_N(\delta)), p^\star)\leq v_{wc}^\star(\gamma_N(\delta))\right] \geq 1-\delta.
\end{equation}
Such out-of-sample guarantees have been established throughout the robust optimization literature \citep{kuhn2019wasserstein, vanparys2021data} since the probability of observing a disappointment event in which the actual cost exceeds the anticipated worst-case cost can be controlled explicitly by a judiciously designed robustness parameter $\gamma_0$. More recently, several authors \citep{vanparys2021data, lam2019recovering} have pointed out that the guarantee \eqref{eq:classical-oos-guarantee} does not necessitate the predictor to satisfy Equation \eqref{def:classical-prediction} but rather can be established more directly as well.
In any event, a key limitation of the out-of-sample guarantee \eqref{eq:classical-oos-guarantee} is that it reduces the disappointment risk of its decision to a single probability level. However, reporting multiple probability levels is either encouraged or legally required in many safety‑critical applications. For example, the US Nuclear Regulatory Commission \citep{NRC_PRA} mandates three tiers of probability assessments whereas NASA \citep[Chapter 13]{NASA_SP_2011_3421} promotes a continuous approach through reporting an exceedance curve which in our context means reporting
\begin{equation}
  \label{eq:curve-oos-guarantee}
  \Prob\left[f(x_{rob}(\gamma_N(\delta)), p^\star)\leq v_{wc}(x_{rob}(\gamma_N(\delta)), \gamma)\right]
\end{equation}
for several radii $\gamma\geq 0$ simultaneously. However, beyond the fact that $v_{wc}(x_{rob}(\gamma_N(\delta)), \gamma) \leq v^\star_{wc}(\gamma_N(\delta))$ for $\gamma\leq \gamma_N(\delta)$, the worst-case cost $v_{wc}(x_{rob}(\gamma_N(\delta)), \gamma)$ for $\gamma>\gamma_{N}(\delta)$ is uncontrolled.
In stark contrast, the GARO solution $x_{garo}$ satisfies $A(x_{garo},\gamma)\leq\alpha_{garo}\phi(\gamma)$ for \emph{every} $\gamma\in\Gamma$ by construction, independent of any chosen confidence level $\delta$. This deterministic property immediately controls the full exceedance curve: for every $\gamma\in\Gamma$,
\begin{equation}
  \label{eq:garo-exceedance}
  \Prob\left[f(x_{garo}, p^\star) \leq v_{wc}^\star(\gamma) + \alpha_{garo}\phi(\gamma)\right]
  \;\geq\;
  \Prob\left[d(p_0, p^\star) \leq \gamma\right].
\end{equation}
For any fixed $\delta$, substituting $\gamma=\gamma_N(\delta)$ and applying the classical guarantee~\eqref{def:classical-prediction} yields a single-level bound
\[
  \Prob\left[f(x_{garo}, p^\star) \leq v_{wc}^\star(\gamma_N(\delta)) + \alpha_{garo}\phi(\gamma_N(\delta))\right] \geq 1-\delta,
\]
which at that particular $\delta$ is admittedly weaker than~\eqref{eq:classical-oos-guarantee} by the additional term $\alpha_{garo}\phi(\gamma_N(\delta))$. The fundamental advantage of $x_{garo}$ is therefore not at any single confidence level but across all levels simultaneously: the robust solution $x_{rob}(\gamma_N(\delta_0))$, designed for a specific $\delta_0$, provides no control over $v_{wc}(x_{rob}(\gamma_N(\delta_0)),\gamma)$ for $\gamma>\gamma_N(\delta_0)$ and hence no guarantee at higher confidence $\delta<\delta_0$. The single decision $x_{garo}$, by contrast, satisfies~\eqref{eq:garo-exceedance} simultaneously for every $\delta\in(0,1)$, exactly supplying the exceedance curve that safety-critical reporting standards require.

\subsection{Adaptive Predictions}
\label{sec:adaptive-predictions}

In Section~\ref{sec:from-classical-wild} we pointed out that adaptive predictions can be derived from classical predictions \citep{jain_robust_2022}, and discussed two example settings in which $\kappa$ represented an unknown moment in a heavy-tailed data setting and an unknown corruption level in a corrupt data setting. Consider the setting in which we have access to a family of classical estimators $p_0(\kappa)$ parametrised in $\kappa$, all of which satisfy the pointwise guarantee in Equation~(\ref{eq:classical-predictor-family}). If we knew $\kappa$ then we could use $p_0(\kappa)$ as a classical prediction and guarantee
\begin{equation}
  \label{eq:point-wise-guarantee-lepski}
  \Prob\left[f(x_{rob, \kappa}(\gamma_N(\delta, \kappa)), p^\star)\leq v_{wc, \kappa}^\star(\gamma_N(\delta, \kappa))\right] \geq 1-\delta
\end{equation}
where $x_{rob, \kappa}(\gamma)$ and $v_{wc, \kappa}^\star(\gamma)$ are the minimizers and minimum of $\min_{x\in X} \max_{p\in P_{\gamma, \kappa}} f(x,p)$, with $P_{\gamma, \kappa}=\set{p\in P}{d(p_0(\kappa), p)\leq \gamma}$.

However, $\kappa$ is unknown in practice. Neither robust nor regret formulations can directly exploit adaptive predictions satisfying guarantee~(\ref{def:adaptive-prediction}): any decision calibrated to a single $\kappa$ forces the practitioner to commit upfront, either choosing the pessimistic $\kappa_{\max}$ (yielding an overcautious decision) or guessing $\kappa$ (risking an underprotected one). The GARO formulation sidesteps this commitment entirely, generalizing and refining the adaptation strategy of~\citet{lepskii1993asymptotically}, and in fact ensures
\[
  \Prob_{\kappa}\left[f(x_{garo}, p^\star) \leq v^\star_{wc}(\gamma_N(\delta, \kappa))+  \alpha_{garo} \phi(\gamma_N(\delta, \kappa)) \right]\geq 1-\delta.
\]
Rather than guaranteeing absolute performance, the previous inequality establishes that the performance of our decision is adaptive to the hardness of the prediction problem as captured by the auxiliary parameter $\kappa\in K\subseteq [\kappa_{\min}, \kappa_{\max}]$. Crucially, $x_{garo}$ does not depend on $\kappa$: a single decision simultaneously adapts to every level of problem hardness, just as in the classical prediction setting it supplies the full exceedance curve without committing to any particular confidence level~$\delta$.
 
Let $\beta>1$ be an approximation constant and consider a discretized subset $K_J=\{\kappa_0, \dots, \kappa_J\}$ of $K$. We have for any estimator $p_0(\kappa_j)$  with $\kappa_j\geq \kappa$ the pointwise guarantee
\[
   \Prob_{\kappa}\left[d(p_0(\kappa_j), p^\star)\leq \gamma_N(\delta, \kappa_j) \right]\geq 1-\delta.
\]
Given the above, we introduce the associated parameter set $\Gamma_J = \{\gamma_j \defn \gamma_N(\delta/(J+1), \kappa_j)\}_{j=0}^J$ and define a nondecreasing family of sets
\begin{equation}
  \label{eq:P_gamma_Lepski}
  P_{\gamma} = \set{p\in P}{d(p_0(\kappa_j), p)\leq \gamma_j \quad\forall j \in [0, \dots, J] ~\st~ \gamma_j \geq \gamma}  
\end{equation}
for any $\gamma\geq 0$. Consider the event $E_\kappa = \{ p^\star \in P_{\gamma_N(\delta/(J+1), \kappa^\star)} \}$ where $\kappa^\star = \min\set{\kappa'\in K_J}{\kappa'\geq \kappa}$. Informally, if $E_\kappa$ occurs then all of the applicable estimates $p_0(\kappa_j)$ for $\kappa_j\geq \kappa$ realize within their error margin bound $d(p_0(\kappa_j), p^\star)\leq \gamma_j$. A simple union bound argument from the pointwise guarantee (\ref{eq:classical-predictor-family}) implies now that we have
\begin{equation}
  \label{eq:union-bound-guarantee}
  \Prob_{\kappa}[E_\kappa]\geq 1-{\abs{\set{\kappa'\in K_J}{\kappa'\geq \kappa}}}\tfrac{\delta}{(J+1)}\geq 1-\delta.
\end{equation}

\begin{theorem}
  \label{thm:generalized-lepski}
  The globalized adversarial regret formulation (\ref{eq:glob-rob-regret}) with $\phi(\gamma)=\min_{x\in X}\max_{p\in P_\gamma} f(x, p)$ and $P_\gamma$ defined as in Equation \eqref{eq:P_gamma_Lepski} reduces to
  \begin{equation}
    \label{eq:glob-rob-regret-adaptive}
    \begin{array}{r@{~}l}
      \displaystyle\min_{x\in X, \, \alpha \ge 0} & \alpha\\[0.5em]
      \st & \max_{p\in  P_{\gamma_j}} f(x, p) \leq (1+\alpha) \min_{x' \in X}\max_{p'\in P_{\gamma_j}} f(x', p')  \quad \forall j\in [0,\dots, J].
    \end{array}
  \end{equation}
  Its solution $(x_{garo}, \alpha_{garo})$ does not depend on $\kappa\in K$ and satisfies on $E_\kappa$ the guarantee
    \begin{equation*}
      f(x_{garo}, p^\star) \leq (1+\alpha_{garo}) v^\star_{wc, \kappa^\star}(\gamma_N(\delta/(J+1), \kappa^\star))
    \end{equation*}
    where $\kappa^\star = \min\set{\kappa'\in K_J}{\kappa'\geq \kappa}$.
\end{theorem}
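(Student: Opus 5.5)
The proof has three parts: reduce the semi-infinite index set $\Gamma$ to the finite grid $\Gamma_J$, rewrite the constraint with the relative rate function, and then derive the pathwise guarantee on $E_\kappa$. None of the steps depends on $\kappa$, which gives the independence claim for free.

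\textbf{Reduction.} By \eqref{eq:P_gamma_Lepski}, $P_\gamma$ depends on $\gamma$ only through the index set $\{j : \gamma_j\geq\gamma\}$. Hence $\gamma\mapsto P_\gamma$ is piecewise constant. For $\gamma\le\gamma_j$ with $\gamma>\gamma_{j-1}$ (taking the $\gamma_j$ sorted nondecreasingly, which holds since $\kappa\mapsto\gamma_N(\delta,\kappa)$ is nondecreasing), we get $P_\gamma = P_{\gamma_j}$. For $\gamma>\gamma_J$ the index set is empty and $P_\gamma=P$; I would take $\Gamma=\Gamma_J$, or equivalently $\Gamma\subseteq[0,\gamma_J]$, so that no constraint beyond $\gamma_J$ appears. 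Consequently both $v_{wc}(x,\gamma)$ and $\phi(\gamma)=v^\star_{wc}(\gamma)$ take only the values at $\gamma_0,\dots,\gamma_J$, and the constraint family in \eqref{eq:glob-rob-regret} collapses to finitely many constraints indexed by $j$. With $\phi(\gamma_j)=v^\star_{wc}(\gamma_j)$, the constraint $A(x,\gamma_j)\le\alpha\phi(\gamma_j)$ rearranges to $\max_{P_{\gamma_j}} f(x,p)\le(1+\alpha)v^\star_{wc}(\gamma_j)$, which is exactly \eqref{eq:glob-rob-regret-adaptive}. This problem is built only from $p_0(\kappa_j)$ and the $\gamma_j$, none of which reference the true $\kappa$, so the solution does not depend on $\kappa$.

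\textbf{Guarantee.} Let $j^\star$ be the index with $\kappa_{j^\star}=\kappa^\star$. On $E_\kappa$ we have $p^\star\in P_{\gamma_{j^\star}}$, so
\[
f(x_{garo},p^\star)\le \max_{p\in P_{\gamma_{j^\star}}} f(x_{garo},p)\le (1+\alpha_{garo})\,v^\star_{wc}(\gamma_{j^\star}).
\]
It remains to compare $v^\star_{wc}(\gamma_{j^\star})$ with $v^\star_{wc,\kappa^\star}(\gamma_{j^\star})$. Membership in $P_{\gamma_{j^\star}}$ imposes the constraint $d(p_0(\kappa^\star),p)\le\gamma_{j^\star}$, among others. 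Hence $P_{\gamma_{j^\star}}\subseteq P_{\gamma_{j^\star},\kappa^\star}$, so the inner maximum is over a smaller set for every $x$, and therefore $v^\star_{wc}(\gamma_{j^\star})\le v^\star_{wc,\kappa^\star}(\gamma_{j^\star})$. Substituting gives the claimed bound. Since $1+\alpha_{garo}\ge0$, multiplying this inequality through preserves its direction, provided $f\ge0$, which holds by assumption.

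\textbf{Main obstacle.} The delicate step is the reduction. It requires care with ties among the $\gamma_j$ and with the treatment of $\gamma$ beyond $\gamma_J$, and it uses nonemptiness of the sets $P_{\gamma_j}$ to keep the min–max values well defined; the latter holds when $E_\kappa$ occurs and otherwise would be assumed. If $\Gamma$ were taken as all of $\Re_+$, I would argue that the constraints with $\gamma>\gamma_J$ read $v_{wc}(x,\cdot)\le(1+\alpha)v^\star_{wc}(\cdot)$ over the unconstrained set $P$. These are either implied by the grid constraints or excluded by convention, and I would adopt the convention $\Gamma=\Gamma_J$ stated alongside \eqref{eq:P_gamma_Lepski}.
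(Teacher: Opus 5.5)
Your proof is correct and follows the same route as the paper. You collapse $\Gamma$ to the grid using the piecewise constancy of $\gamma\mapsto P_\gamma$, rewrite $A(x,\gamma_j)\le\alpha\, v^\star_{wc}(\gamma_j)$ as the relative constraint, and on $E_\kappa$ combine $p^\star\in P_{\gamma_{j^\star}}$ with the inclusion $P_{\gamma_{j^\star}}\subseteq P_{\gamma_{j^\star},\kappa^\star}$. Your pieces $(\gamma_{j-1},\gamma_j]$ are in fact the right ones: the paper's proof writes $[\gamma_j,\gamma_{j+1})$, but for $\gamma\in(\gamma_j,\gamma_{j+1})$ the constraint indexed by $j$ drops out and $P_\gamma=P_{\gamma_{j+1}}$. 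Two of your asides are slightly overstated, though: $E_\kappa$ only guarantees $P_{\gamma_j}\neq\emptyset$ for $j\ge j^\star$, and multiplying by $1+\alpha_{garo}\ge 1$ requires no sign condition on $f$.
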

\begin{proof}
  The reduction from \ref{eq:glob-rob-regret} to \eqref{eq:glob-rob-regret-adaptive} follows from the observation that $\phi$ is an increasing function and the definition of $P_\gamma$ in Equation \eqref{eq:P_gamma_Lepski} implies that $P_{\gamma_j} = P_{\gamma}$ for any $\gamma\in [\gamma_j, \gamma_{j+1})$.

  By the choice of rate function $\phi$ the solution $(x_{garo}, \alpha_{garo})$ satisfies here a relative guarantee \eqref{eq:gror-guarantee-relative} and hence
  \[
    \max_{p\in P_\gamma} f(x_{garo}, p) \leq (1+\alpha_{garo}) \min_{x'\in X}\max_{p'\in P_\gamma} f(x', p') \quad \forall \gamma\geq 0.
  \]
  We consider $\gamma^\star = \gamma_N(\delta/(J+1), \kappa^\star)$ and observe that $E_\kappa = \{ p^\star \in \mc P_{\gamma_N(\delta/(J+1), \kappa^\star)} \}$ and so
  \[
    f(x_{garo}, p^\star) \leq (1+\alpha_{garo}) \min_{x'\in X}\max_{p'\in P_{\gamma^\star}} f(x', p').
  \]
  Finally, observe that
  \(
  P_{\gamma^\star}\subseteq P_{\gamma_N(\delta/(J+1), \kappa^\star), \kappa^\star}
  \)
  and hence
  \(
  f(x_{garo}, p^\star) \leq (1+\alpha_{garo}) v^\star_{wc, \kappa^\star}(\gamma_N(\delta/(J+1), \kappa^\star)).
  \)

\end{proof}

From Equation \eqref{eq:union-bound-guarantee} and Theorem \ref{thm:generalized-lepski} it follows that
\[
  \Prob_{\kappa} \left[f(x_{garo}, p^\star) \leq (1+\alpha_{garo}) v^\star_{wc, \kappa^\star}(\gamma_N(\delta/(J+1), \kappa^\star)) \right] \geq 1-\delta
\]
with $\kappa^\star = \min\set{\kappa'\in K_J}{\kappa'\geq \kappa}$, which should be compared to Equation \eqref{eq:point-wise-guarantee-lepski}. Note that \eqref{eq:point-wise-guarantee-lepski} is an oracle bound, achievable only if $\kappa$ is known. Relative to this oracle, GARO's bound is admittedly weaker in two respects: it carries the multiplicative overhead $(1+\alpha_{garo})$ and uses the reduced confidence $\delta/(J+1)$ in place of $\delta$. Since $\gamma_N(\delta,\kappa)$ is typically insensitive to $\delta$, the latter difference is minor and can be further controlled by an appropriate discretization of $K$. The fundamental advantage of $x_{garo}$ is again not at any single $\kappa$ but across all $\kappa\in K$ simultaneously: the single $x_{garo}$ satisfies the above bound for every $\kappa$ without any knowledge of it.

We emphasize that \ref{eq:glob-rob-regret} is not merely feasible but optimal among adaptive decisions: any decision $\hat x$ satisfying $f(\hat x, p^\star) \leq (1+\hat\alpha) v^\star_{wc, \kappa^\star}(\gamma_N(\delta/(J+1), \kappa^\star))$ on $E_\kappa$ must have $\hat\alpha \geq \alpha_{garo}$. That is, $x_{garo}$ achieves the smallest possible multiplicative excess over the oracle cost, for each $\kappa$.

\begin{examplebox}
  \label{example:adaptive-predictions-lepski}
  To recover the adaptation strategy of \citet{lepskii1993asymptotically} we consider a continuous cost function $f(x, p) = d(x,p)$ satisfying a triangular inequality and again consider access to a family of classical predictors $p_0(\kappa)$ for $\kappa\in K$ satisfying the pointwise guarantee
\[
  \Prob_{\kappa}\left[d(p_0(\kappa), p^\star) \leq \gamma_N(\delta; \kappa) \right]\geq 1- \delta.
\]
The cost function reflects here that in classical estimation settings in the absence of any particular downstream decision tasks the norm of the error is the most classical measure of performance. Consider here the increasing family of sets $P_{\gamma, \kappa} = \set{p}{d(p_0(\kappa), p) \leq \gamma }$ with associated oracle cost $v_{wc, \kappa}^\star(\gamma) = \min_{x}\max_{p\in P_\gamma} f(x, p)=\gamma$ and define the increasing family of sets $P_{\gamma}$ as in Equation \eqref{eq:P_gamma_Lepski}. The associated globalized adversarial regret formulation (\ref{eq:glob-rob-regret}) returns $(x_{garo}, \alpha_{garo})$ which following Theorem \ref{thm:generalized-lepski} satisfies on $E_\kappa$ the guarantee
  \begin{equation*}
    d(x_{garo}, p^\star) \leq (1+\alpha_{garo}) \gamma_N(\delta/(J+1), \kappa^\star) .
  \end{equation*}

  \begin{lemma}
    \label{lem:lepskis-upperbound}
    We have
    \(
    E_\kappa \implies \alpha_{garo}\leq 1.
    \)
  \end{lemma}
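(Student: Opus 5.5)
The plan is to exhibit one feasible decision for \eqref{eq:glob-rob-regret-adaptive} with $\alpha=1$. Since $\alpha_{garo}$ is the optimal value of a minimization, this gives $\alpha_{garo}\leq 1$ directly. The candidate is the decision Lepskii's rule would pick: a point lying in the smallest nonempty set of the family.

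First I would record the monotonicity of the family \eqref{eq:P_gamma_Lepski}. A smaller $\gamma$ imposes constraints for more indices $j$, so $\gamma\mapsto P_\gamma$ is nondecreasing. On $E_\kappa$ we have $p^\star\in P_{\gamma^\star}$ with $\gamma^\star=\gamma_N(\delta/(J+1),\kappa^\star)$, so at least one $P_{\gamma_j}$ is nonempty. Hence the index
\[
\bar\jmath \defn \min\set{j}{P_{\gamma_j}\neq\emptyset}
\]
is well defined. I then pick any $\bar x\in P_{\gamma_{\bar\jmath}}$, which I assume is admissible (as in Example~\ref{example:adaptive-predictions-lepski}, where $X$ contains the parameter space). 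By monotonicity, $\bar x\in P_{\gamma_j}$ for every $j\geq\bar\jmath$.

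Next I verify the constraint of \eqref{eq:glob-rob-regret-adaptive} with $\alpha=1$ for each $j\geq\bar\jmath$. Take any $p,q\in P_{\gamma_j}$ and any $x'\in X$. The triangle inequality, with symmetry of $d$, gives
\[
d(p,q)\leq d(p,x')+d(x',q)\leq 2\max_{p'\in P_{\gamma_j}} d(x',p').
\]
Minimizing over $x'$ yields $\operatorname{diam}(P_{\gamma_j})\leq 2\,v^\star_j$, where $v^\star_j\defn\min_{x'}\max_{p'\in P_{\gamma_j}}d(x',p')$. Because $\bar x\in P_{\gamma_j}$, it follows that
\[
\max_{p\in P_{\gamma_j}} d(\bar x,p)\leq \operatorname{diam}(P_{\gamma_j})\leq 2v^\star_j=(1+1)v^\star_j.
\]
This is exactly the $j$-th constraint with $\alpha=1$.

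For indices $j<\bar\jmath$ the set $P_{\gamma_j}$ is empty. Under the usual convention that a maximum over the empty set is $-\infty$, both sides degenerate and the constraint is vacuous; equivalently, $\Gamma$ effectively only contains the levels at which the family is nonempty. Hence $(\bar x,1)$ is feasible and $\alpha_{garo}\leq 1$.

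I expect the main obstacle to be this bookkeeping step: handling the empty sets below $\bar\jmath$ consistently with how the relative rate $\phi$ is defined there, and making explicit the symmetry of $d$ that the triangle argument needs. The rest is the classical Lepskii diameter argument, here phrased as $\text{radius}\geq\text{diameter}/2$.
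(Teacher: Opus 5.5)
Your proposal is correct and follows essentially the paper's argument: pick $\bar x$ in the smallest nonempty set of the family, then apply the triangle inequality through a minimax center $p_\gamma$ of $P_\gamma$ to get $d(\bar x,p)\leq d(\bar x,p_\gamma)+d(p_\gamma,p)\leq 2\|P_\gamma\|$, which is exactly your ``diameter at most twice the radius'' step. You also make explicit two points the paper's proof leaves implicit, namely the symmetry of $d$ and the convention for the empty levels below $\bar\jmath$.
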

  \begin{proof}
    Define here $p_{\gamma} \in \arg\min_{x}\max_{p\in P_{\gamma}} d(x, p)$ and $\norm{P_\gamma} = \min_{x}\max_{p\in P_{\gamma}} d(x, p)$.
    Observe that from the triangle inequality it follows that
    \(
    v_{wc}(x, \gamma) = \max_{p\in P_{\gamma}} d(x,p) \leq \max_{p\in P_{\gamma}} d(x,p_\gamma) + d(p_\gamma, p) = d(x,p_\gamma) + \norm{P_{\gamma}}
    \)
    and
    \(
    v^\star_{wc}(\gamma) = \norm{P_{\gamma}}.
    \)
    Hence, we have that
    \[
      \alpha_{garo} \leq \min_{x, \alpha\geq 0} \set{\alpha}{ d(x,p_\gamma) + \norm{P_{\gamma}} \leq (1+\alpha) \norm{P_{\gamma}}\quad \forall \gamma\in \Gamma_J}.
    \]
    However, consider now $\bar x\in P_{\gamma^\star}$ and $\bar \alpha=1$ with $\gamma^\star$ the smallest $\gamma \in \Gamma_J$ so that $\mc P_{\gamma} \neq \emptyset$. We have
    \[
      d(\bar x,p_\gamma) \leq  \norm{P_{\gamma}}
    \]
    as $\bar x\in P_{\gamma^\star}\subseteq P_{\gamma}$ for any $\gamma\geq \gamma^\star$ from which the claim follows immediately.
  \end{proof}

  We have by Lemma \ref{lem:lepskis-upperbound} that in this adaptive prediction setting $\alpha_{garo}$ is bounded above by $1$ on $E_\kappa$.
  From Equation \eqref{eq:union-bound-guarantee} it hence follows that
  \[
    \Prob_{\kappa}\left[ d(x_{garo}, p^\star) \leq 2 \gamma_N(\delta/(J+1), \kappa^\star)\right]\geq 1-\delta.
  \]
  For the particular geometric discretization $K=\{\kappa_j = \kappa_{\min} \beta^j\}_{j=0}^J$ with $J=\ceil{\log_{\beta}(\tfrac{\kappa_{\max}}{\kappa_{\min}})}$ and $\beta > 1$ we have $\kappa^\star \leq \beta\kappa$ and recover Equation \eqref{eq:Lepskis-guarantee}.
  
\end{examplebox}

\subsection{Wild Predictions}

In the wild prediction setting of Section~\ref{sec:from-classical-wild}, the prediction error $\gamma^\star=d(p_0,p^\star)$ satisfies $\gamma^\star\leq\gamma_{\wc}$ in the worst case yet is typically much smaller ($\gamma^\star\leq\gamma_{\typ}$), with no probabilistic relationship between the two levels. Setting $\Gamma=[\gamma_{\typ},\gamma_{\wc}]$, the performance guarantees~\eqref{eq:gror-guarantee-absolute} and~\eqref{eq:gror-guarantee-relative} of Section~\ref{sec:robust_decisions_wild_predictions} apply in their simplest, fully deterministic form: whatever the unknown $\gamma^\star\in[\gamma_{\typ},\gamma_{\wc}]$ turns out to be,
\[
  f(x_{garo},p^\star) \leq v_{wc}^\star(\gamma^\star) + \alpha_{garo}\phi(\gamma^\star),
\]
without invoking any probability model on the prediction. This is the primary regime for which GARO was designed. Unlike the classical and adaptive settings, where statistical concentration inequalities are needed to give the deterministic guarantee probabilistic content, here the bound holds with certainty for any realisation of $\gamma^\star\in\Gamma$. The classical and adaptive guarantees of the preceding subsections should therefore be seen as a bonus: GARO rewards the practitioner with probabilistic exceedance curves or adaptive oracle-tracking when a statistical model is available, but requires neither. Furthermore, by construction $x_{garo}$ is weakly minimal in $(A(x,\gamma))_{\gamma\in\Gamma}$: no other single decision simultaneously reduces adversarial regret across the entire interval $\Gamma=[\gamma_{\typ},\gamma_{\wc}]$, making it the natural choice precisely when predictions cannot be trusted to carry statistical structure.

\section{Solution Methods}
\label{sec:algorithms}

The adversarial regret formulation (\ref{eq:glob-rob-regret}) is computationally more appealing than a classical regret formulation which is already NP hard even for linear optimization problems \citep{averbakh2005complexity}. Nevertheless, the adversarial regret formulation is more challenging to solve than its robust counterpart formulation stated in Equation \eqref{eq:rob-optimization}. In what follows we will take the standard robust problem in Equation \eqref{eq:rob-optimization} as a computational primitive which we assume can be solved efficiently; see \cite{ben2002robust}.
We point out that an exact tractable formulation of \ref{eq:glob-rob-regret} is possible when $v_{wc}$ is affine in $\gamma$: for concave rate functions $\phi$ this reduces to just two constraints (Section~\ref{sec:nomin-robust-form}), while for general $\phi$ it yields a finite number of convex constraints via a conjugate representation (Section~\ref{sec:line-form-with}).
Furthermore, in case $\Gamma$ has finite cardinality (as was the case when working with Lepskii's method for adaptive predictions in Section~\ref{sec:adaptive-predictions}) an exact tractable formulation is guaranteed.
Finally, we give two approximation algorithms to handle more general settings.

\subsection{Nominal-Robust Formulations}
\label{sec:nomin-robust-form}

In this setting we assume that the worst-case cost function is affine in the robustness parameter. 

\begin{assumption}
  \label{ass:affine-in-gamma}
Let $\gamma \mapsto v_{wc}(x, \gamma) = \max_{p\in P_\gamma} f(x, p)$ be affine for all $x\in X$.
\end{assumption}

The following proposition shows that under the simplifying Assumption \ref{ass:affine-in-gamma}, only the interval extremes $\{\min \Gamma, \max \Gamma\}$ of the set $\Gamma$ are of interest when the rate function $\phi$ is concave. We remark that this includes $\phi(\gamma) = v^\star_{wc}(\gamma) = \min_{x\in X} v_{wc}(x, \gamma)$ due to Assumption \ref{ass:affine-in-gamma}.

\begin{proposition}
  \label{prop:nomin-robust-form}
    Let Assumption \ref{ass:affine-in-gamma} hold and $\phi$ be concave. Then, Problem \ref{eq:glob-rob-regret} is equivalent to 
    \begin{equation}
  \label{eq:glob-rob-regret_concave_linear}
  \begin{array}{r@{~}l}
    \min_{x\in X, \,\alpha \ge 0} &  \alpha \\
    \st & \max_{p\in P_{\min\Gamma}} f(x, p)  - v_{wc}^\star(\min \Gamma) \leq \alpha \phi(\min \Gamma), \\
                                  & \max_{p\in P_{\max\Gamma}} f(x, p) - v_{wc}^\star(\max \Gamma) \leq \alpha \phi(\max \Gamma).
  \end{array}
\end{equation}
\end{proposition}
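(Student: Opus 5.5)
Under Assumption \ref{ass:affine-in-gamma}, $v^\star_{wc}$ is concave in $\gamma$. The plan is then to show that for every fixed $x$ and $\alpha\geq 0$ the constraint function
\[
g_{x,\alpha}(\gamma) \defn v_{wc}(x,\gamma) - v^\star_{wc}(\gamma) - \alpha\phi(\gamma)
\]
is convex in $\gamma$ on the interval $\Gamma$. A convex function on an interval attains its maximum at an endpoint, so $g_{x,\alpha}\leq 0$ on $\Gamma$ holds if and only if it holds at $\min\Gamma$ and $\max\Gamma$. This is exactly the constraint set in \eqref{eq:glob-rob-regret_concave_linear}. The two problems then have identical feasible sets in $(x,\alpha)$ and the same objective, which gives the claimed equivalence.

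The convexity claim comes from three observations, carried out in order.
\begin{enumerate}
\item The term $\gamma\mapsto v_{wc}(x,\gamma)$ is affine by Assumption \ref{ass:affine-in-gamma}.
\item The oracle cost $v^\star_{wc}(\gamma)=\min_{x'\in X} v_{wc}(x',\gamma)$ is a pointwise minimum of affine functions of $\gamma$, hence concave, so $-v^\star_{wc}$ is convex. The minimum is attained by compactness of $X$ and $P_\gamma$ and continuity of $f$, as stipulated earlier.
\item Since $\phi$ is concave and $\alpha\geq 0$, the term $-\alpha\phi$ is convex.
\end{enumerate}
Summing, $g_{x,\alpha}$ is convex. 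For any $\gamma=\lambda\min\Gamma+(1-\lambda)\max\Gamma$ one then has $g_{x,\alpha}(\gamma)\leq \lambda g_{x,\alpha}(\min\Gamma)+(1-\lambda)g_{x,\alpha}(\max\Gamma)\leq 0$ whenever both endpoint constraints hold. The converse direction is trivial, because the endpoints belong to $\Gamma$.

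The main obstacle is the implicit assumption that $\Gamma$ is a compact interval, so that $\min\Gamma$ and $\max\Gamma$ exist and every $\gamma\in\Gamma$ is a convex combination of them. I would state this explicitly; if $\Gamma$ is an arbitrary subset of such an interval, the same argument still shows that the endpoint constraints imply all the others. A second point to handle is finiteness of $v_{wc}$ and $v^\star_{wc}$ on $\Gamma$. This follows from compactness and continuity, so that no $\infty-\infty$ expressions arise in the convexity argument.

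Finally, I would remark that the choice $\phi=v^\star_{wc}$ is concave by step 2, so the relative rate function is covered, as the paper notes before the statement.
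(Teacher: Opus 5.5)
Your proposal is correct and follows the paper's proof. Both arguments show that $\gamma\mapsto v_{wc}(x,\gamma)-v^\star_{wc}(\gamma)-\alpha\phi(\gamma)$ is convex on $\Gamma$ (affine, minus concave, minus a nonnegative multiple of a concave function), so its maximum is attained at $\min\Gamma$ or $\max\Gamma$; you additionally spell out the concavity of $v^\star_{wc}$ as a pointwise minimum of affine functions and the role of $\alpha\ge 0$, both of which the paper leaves implicit.
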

\begin{proof}
We can reformulate Problem \ref{eq:glob-rob-regret} as
\begin{equation}
  \begin{array}{r@{~}l}
    \min_{x\in X, \,\alpha\ge 0} & \alpha\\
    \st & A(x, \gamma) = \max_{p\in P_\gamma} f(x, p) -  v_{wc}^\star(\gamma) \leq \alpha \phi(\gamma) \quad \forall \gamma\in \Gamma.
  \end{array}
\end{equation}
The constraint function $\max_{p\in P_\gamma} f(x, p) -  v_{wc}^\star(\gamma) -\alpha\phi(\gamma)$ is here convex in $\gamma$. It follows that the function attains its maximum on the interval extremities $\min\Gamma$ and $\max \Gamma$.
\end{proof}

Formulation \eqref{eq:glob-rob-regret_concave_linear} is an uncertain optimization formulation with objective balancing nominal and robust performance.
This balance has been extensively studied in the robust optimization literature; see \citet{chassein2016bicriteria}.
Furthermore, the desire to balance performance when the prediction is exact ($p^\star=p_0$ and $\min \Gamma=0$) while retaining control over the performance when the prediction fails ($p^\star\in P_{\max\Gamma}$) has been the topic of the surging literature on algorithms with predictions \citep{mitzenmacher2022algorithms}.

\begin{examplebox}[Uncertain Linear Regression]
Consider an uncertain linear regression problem
\(
  \min_{x\in X} \norm{Ax-b}
\)
where the matrix $A$ is uncertain. We assume that we have a prediction $A_0$ for the matrix $A$ where we expect the operator norm $\norm{A_0-A}=\max_{u\neq 0}\tfrac{\norm{(A-A_0)u}}{\|u\|}$ to remain small.
A well known result from the robust optimization literature is that Assumption \ref{ass:affine-in-gamma} holds as we have
\(
  v_{wc}(x, \gamma) \defn \max_{\|A-A_0\|\leq \gamma}\norm{Ax-b}  = \norm{A_0x-b} + \gamma \norm{x}
\)
revealing the classical connection between robustness and regularization. 
The \ref{eq:glob-rob-regret} formulation of this problem with $\Gamma=[0, \gamma_{\max}]$ for a concave rate function $\phi$ simplifies to
\[
\begin{array}{r@{~}l}
  \min_{x\in X, \,\alpha \ge 0} &  \alpha \\
  \st  & \norm{A_0 x-b}  - v^\star_{wc}(0) \leq \alpha \phi(0), \\
                                &\norm{A_0x-b} + \gamma_{\max} \norm{x} - v^\star_{wc}(\gamma_{\max}) \leq \alpha \phi(\gamma_{\max}).
\end{array}
\]
\end{examplebox}

\subsection{Linear Formulations with Polyhedral Norm Uncertainty}
\label{sec:line-form-with}

Adversarial robust optimization formulation (\ref{eq:glob-rob-regret}) has an exact reformulation also in the context of uncertain linear optimization problems.

\begin{assumption}
  \label{ass:uncertain-linear}
  Let $X$ be a polyhedral set, $P$ unconstrained, the distance function $d(p, p')=\norm{p-p'}$ defined by a polyhedral norm and $f(x, p) = x\tpose p$ a linear function.
\end{assumption}

A standard result from the robust optimization literature is that
\[
  v_{wc}(x, \gamma)\defn \max_{\|p-p_0\|\leq \gamma}x\tpose p = x\tpose p_0 +\gamma \norm{x}_\star
\]
where $\|\cdot\|_*$ is the dual norm. Hence, the worst-case function is affine in $\gamma$ for any decision $x$ and satisfies Assumption \ref{ass:affine-in-gamma}. Consequently, for concave rate functions $\phi$ Proposition \ref{prop:nomin-robust-form} applies and our associated (\ref{eq:glob-rob-regret}) formulation reduces to simply balancing nominal and worst-case costs.

For general rate functions $\phi$, the situation is more complicated. 
As however the dual norm $\|\cdot \|_\star$ is also polyhedral and $X$ a polytope, we have that the robust oracle cost
\begin{align}
  \label{eq:uncertain-linear-oracle}
v_{wc}^\star(\gamma) \defn \min_{x\in X} \ & x \tpose p_0  + \gamma \norm{x}_\star 
\end{align}
is characterized as a linear representable minimization problem. A standard result from parametric linear optimization \citep{bertsimas1997introduction} is that we can partition the interval $\Gamma = \bigcup_{t=1}^{T}\Gamma_t$ such that on each interval $\Gamma_t = [\gamma_t,\gamma_{t+1})$ an extreme point $x_t$ is optimal in \eqref{eq:uncertain-linear-oracle} so that $v_{wc}^\star(\gamma) = {x_t} \tpose p_0  + \gamma \|x_t\|_\star  $ for all $\gamma\in \Gamma_t$.
The semi-infinite constraint in the adversarial robust formulation (\ref{eq:glob-rob-regret}) hence reduces to $T$ semi-infinite constraints
\begin{align}
  \label{eq:J-semi-infinite}
 x\tpose p_0 + \gamma \| x\|_\star -  {x_t}\tpose p_0 - \gamma \| x_t\|_\star & \leq\alpha \phi(\gamma) \quad \forall \gamma\in \Gamma_t \quad \forall t\in [1,\ldots, T].
\end{align}
We now indicate that the latter constraints can be reformulated as $T$ convenient convex constraints.
Associate indeed with every interval $\Gamma_t$ an increasing convex conjugate function given as
\begin{align}
  \phi^\star_t(\Delta) \defn & \max_{\gamma\in \Gamma_t} \Delta\gamma - \phi(\gamma)
     \label{eq:convex-conjugate}
\end{align}
and hence we can reformulate the semi-infinite constraints \eqref{eq:J-semi-infinite} as the convex constraints
\[
  (x-x_t)\tpose p_0  + \alpha \phi_t^\star\left(\tfrac{\left(\|x\|_\star-\|x_t\|_\star\right)}{\alpha}\right) \leq 0  \quad \forall t\in [1,\ldots, T].
\]
Here the $(\Delta, \alpha) \mapsto \alpha \phi_t^\star(\Delta/\alpha)$ term is the perspective of $ \Delta \mapsto \phi_t^\star(\Delta)$, which is well-defined and convex for $\alpha\geq 0$ \citep{combettes2018perspective}.

\subsection{Discretization Formulations}\label{sec:discretization}
As $\gamma\in \Gamma$ is a one-dimensional parameter, Problem \ref{eq:glob-rob-regret} can be approximated by appropriately discretizing the interval $\Gamma$.
For a given discretization $\min\Gamma=\gamma_1\leq \gamma_2\leq \cdots \leq \gamma_{T+1}=\max\Gamma$, the discretized version of \ref{eq:glob-rob-regret} is defined here as the problem
\begin{equation}
  \label{eq:glob-rob-regret_discretized}\tag{GARO$_d$}
  \begin{aligned}
    \min_{x\in X, \,\alpha\ge 0} & \quad \alpha\\
    \st & \quad \max_{p\in P_{\gamma_t}}  f(x, p) - v_{wc}^\star(\gamma_t) \leq \alpha \phi(\gamma_t) \quad \forall t\in [1,\ldots ,T+1] .
  \end{aligned}
\end{equation}
We recall that the oracle costs $v_{wc}^\star(\gamma_t) \defn \min_{x' \in X}\max_{p'\in P_{\gamma_t}} f(x', p')$ require merely the solution of a classical robust optimization formulation. Hence, Problem \eqref{eq:glob-rob-regret_discretized} is an optimization problem with $T+1$ robust constraints. Using standard dual representations, the maximization terms in \eqref{eq:glob-rob-regret_discretized} can in a wide range of circumstances be reformulated (see \cite{ben2015deriving}) resulting in an equivalent optimization problem amenable to off-the-shelf solvers. Alternatively, classical constraint generation methods for robust optimization problems could be applied to each of the $T+1$ constraints to solve the problem.

\begin{assumption}
  \label{ass:lipschitz}
  Assume a selection $x_{rob}(\gamma) \in X_{rob}(\gamma)=\arg\min_{x\in X} \max_{p\in P_\gamma} f(x, p)$ exists for all $\gamma\in\Gamma$. We assume that $\gamma'\mapsto \max_{p\in P_{\gamma'}} f(x_{rob}(\gamma), p)$ is Lipschitz continuous with constant $L$ for all $\gamma\in\Gamma$.
\end{assumption}

In the following we study the effect of discretizing the interval $\Gamma$. The following theorem states that the discretized formulation \eqref{eq:glob-rob-regret_discretized} returns a solution which violates the constraints in the adversarial robust formulation \ref{eq:glob-rob-regret} by a value which is well controlled by the grid size under Assumption \ref{ass:lipschitz}.

\begin{theorem}\label{thm:discretization_error}
Let Assumption \ref{ass:lipschitz} hold and let $\Delta:=\max_{t=1,\ldots, T} \gamma_{t+1} - \gamma_{t}$ be the grid size. Denote with $L'$ the Lipschitz constant of $\phi$ over $\Gamma$. An optimal solution $(x^\Delta_{garo},\alpha^\Delta_{garo})$ in \eqref{eq:glob-rob-regret_discretized} satisfies for all $\gamma \in \Gamma$ the inequality
\[
  \max_{p\in P_\gamma} \left[ f(x^\Delta_{garo}, p) - \min_{x' \in X}\max_{p'\in P_\gamma} f(x', p') \right] - \alpha^\Delta_{garo}\phi(\gamma) \le \Delta(L+\alpha^\Delta_{garo} L').
\]
\end{theorem}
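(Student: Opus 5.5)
The idea is to compare an arbitrary $\gamma\in\Gamma$ with the grid point immediately to its right. Three ingredients are needed: the fact that $\gamma\mapsto v_{wc}(x,\gamma)$ is nondecreasing for every fixed $x$, the Lipschitz Assumption~\ref{ass:lipschitz} applied to the oracle selection $x_{rob}(\gamma)$, and the Lipschitz continuity of $\phi$. First note that the left-hand side equals $A(x^\Delta_{garo},\gamma)-\alpha^\Delta_{garo}\phi(\gamma)=v_{wc}(x^\Delta_{garo},\gamma)-v^\star_{wc}(\gamma)-\alpha^\Delta_{garo}\phi(\gamma)$, because the subtracted oracle term does not depend on $p$.

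Fix $\gamma\in\Gamma$. Choose $t$ with $\gamma\in[\gamma_t,\gamma_{t+1}]$, so that $0\le\gamma_{t+1}-\gamma\le\Delta$. The argument has three steps.

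\emph{Step 1 (monotonicity).} Since the sets $P_\gamma$ are nondecreasing, $v_{wc}(x^\Delta_{garo},\gamma)\le v_{wc}(x^\Delta_{garo},\gamma_{t+1})$. Feasibility in \eqref{eq:glob-rob-regret_discretized} at grid point $\gamma_{t+1}$ then gives
\[
v_{wc}(x^\Delta_{garo},\gamma)\le v^\star_{wc}(\gamma_{t+1})+\alpha^\Delta_{garo}\phi(\gamma_{t+1}).
\]

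\emph{Step 2 (oracle cost at the grid point).} The Lipschitz assumption is placed on the robust selection, not on $x^\Delta_{garo}$, so it is used only here. Suboptimality of $x_{rob}(\gamma)$ for the oracle problem at level $\gamma_{t+1}$, followed by Assumption~\ref{ass:lipschitz}, gives
\[
v^\star_{wc}(\gamma_{t+1})\le v_{wc}(x_{rob}(\gamma),\gamma_{t+1})\le v_{wc}(x_{rob}(\gamma),\gamma)+L(\gamma_{t+1}-\gamma)\le v^\star_{wc}(\gamma)+L\Delta .
\]

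\emph{Step 3 (rate function).} Lipschitz continuity of $\phi$ gives $\phi(\gamma_{t+1})\le\phi(\gamma)+L'\Delta$, and $\alpha^\Delta_{garo}\ge 0$.

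Substituting Steps 2 and 3 into Step 1 and rearranging yields
\[
v_{wc}(x^\Delta_{garo},\gamma)-v^\star_{wc}(\gamma)-\alpha^\Delta_{garo}\phi(\gamma)\le\Delta\bigl(L+\alpha^\Delta_{garo}L'\bigr),
\]
which is the claim.

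The main subtlety is the choice of grid point. Comparing $\gamma$ with its left neighbour $\gamma_t$ would require Lipschitz control of $v_{wc}(x^\Delta_{garo},\cdot)$, which is not assumed. Moving to the right neighbour lets plain monotonicity handle the $x^\Delta_{garo}$ term, and leaves the Lipschitz assumption to be used only for the oracle term, exactly as Assumption~\ref{ass:lipschitz} is stated. The endpoint case $\gamma=\max\Gamma$ is itself a grid point and is covered by the constraint of \eqref{eq:glob-rob-regret_discretized} directly.
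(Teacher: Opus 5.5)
Your proof is correct and follows essentially the same route as the paper's. Both arguments move to the right grid point, use monotonicity for the $x^\Delta_{garo}$ term, invoke feasibility there, control the oracle term via Assumption~\ref{ass:lipschitz} on a robust selection, and finish with Lipschitz continuity of $\phi$. The only difference is that the paper first lowers $v^\star_{wc}(\gamma)$ to $v^\star_{wc}(\gamma_j)$ by monotonicity and then uses the selection $x_{rob}(\gamma_j)$ at the left grid point, so it needs the Lipschitz assumption only at grid points. You instead use $x_{rob}(\gamma)$ directly, which is equally valid since the assumption is stated for all $\gamma\in\Gamma$.
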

In other words, $(x^\Delta_{garo},\alpha^\Delta_{garo})$ is $\Delta(L+\alpha^\Delta_{garo} L')$-feasible in \ref{eq:glob-rob-regret}.
Since $\alpha^\Delta_{garo}$ is available after solving the discretized problem, the bound $\Delta(L+\alpha^\Delta_{garo} L') \leq \varepsilon$ can be verified a posteriori and the grid refined if necessary.
Theorem~\ref{thm:discretization_error} is a feasibility statement, but it implies an optimality guarantee on $\alpha^\Delta_{garo}$ directly.
Denote $\phi_{\min}\defn\phi(\min\Gamma)$.
If $\phi_{\min}=0$, the GARO constraint at $\gamma=\min\Gamma$ forces $A(x,\min\Gamma)=0$, so $x_{garo}\in X_{rob}(\min\Gamma)$ and the problem essentially reduces to a standard robust formulation.
In a nontrivial case $\phi_{\min}>0$, the discretized problem \ref{eq:glob-rob-regret_discretized} has fewer constraints than \ref{eq:glob-rob-regret}, giving $\alpha_{garo}^\Delta\leq\alpha_{garo}$.
Theorem~\ref{thm:discretization_error} further implies that $(x^\Delta_{garo},\,\alpha_{garo}^\Delta+\Delta(L+\alpha_{garo}^\Delta L')/\phi_{\min})$ is feasible in \ref{eq:glob-rob-regret}, since $\phi(\gamma)\geq\phi_{\min}$ for all $\gamma\in\Gamma$, and hence $\alpha_{garo}\leq\alpha_{garo}^\Delta+\Delta(L+\alpha_{garo}^\Delta L')/\phi_{\min}$.
Combining,
\[
  0 \;\leq\; \alpha_{garo} - \alpha_{garo}^\Delta \leq \tfrac{\Delta(L+\alpha_{garo}^\Delta L')}{\phi_{\min}}\leq \tfrac{\Delta(L+\alpha_{garo} L')}{\phi_{\min}},
\]
so $\alpha_{garo}^\Delta\to\alpha_{garo}$ at rate $\mc O(\Delta)$ as the grid is refined.

\begin{proof}[Proof of Theorem \ref{thm:discretization_error}]
We recall that we defined
\(
v_{wc}(x, \gamma)\defn \max_{p\in P_\gamma} f(x,p)
\)
and
\(
v_{wc}^\star(\gamma) \defn \min_{x\in X} v_{wc}(x, \gamma).
\)
Assume now that $\gamma\in \Gamma$ is arbitrary. There exists a $j\in \{ 1,\ldots ,T\}$ such that $\gamma \in [\gamma_j, \gamma_{j+1})$. Let $x_{j} = x_{rob}(\gamma_j) \in X_{rob}(\gamma_j)$ be an optimal solution of the min-max problem with uncertainty set $P_{\gamma_j}$.
We can bound
\begin{align*}
v_{wc}(x_{garo}^\Delta, \gamma) - v_{wc}^\star(\gamma) & \le v_{wc}(x_{garo}^\Delta, \gamma_{j+1}) - v_{wc}^\star(\gamma_j) \\
& \le v_{wc}^\star(\gamma_{j+1}) - v_{wc}^\star(\gamma_j) + \alpha^\Delta_{garo} \phi(\gamma_{j+1}) \\
& \le v_{wc}(x_{j}, \gamma_{j+1}) - v_{wc}(x_{j}, \gamma_{j}) + \alpha^\Delta_{garo} \phi(\gamma_{j+1})
\end{align*}
where the first inequality follows since $v_{wc}(x, \gamma)$ is non-decreasing in $\gamma$ for every $x$, the second inequality follows since $(x_{garo}^\Delta,\alpha^\Delta_{garo})$ is a feasible solution in Problem \eqref{eq:glob-rob-regret_discretized}, and the third inequality follows since $x_{j} \in X$ is a minimizer of $v_{wc}(x, \gamma_{j})$. We can continue bounding the last term as
\begin{align*}
    & v_{wc}(x_{j}, \gamma_{j+1}) - v_{wc}(x_{j}, \gamma_{j}) + \alpha^\Delta_{garo} \phi(\gamma_{j+1}) \\
    & \le (\gamma_{j+1} - \gamma_{j}) L + \alpha^\Delta_{garo} \phi(\gamma_{j+1}) \\
    & = (\gamma_{j+1} - \gamma_{j}) L + \alpha^\Delta_{garo} \phi(\gamma) + \alpha^\Delta_{garo} \left(\phi(\gamma_{j+1}) - \phi(\gamma)\right) \\
    & \leq (\gamma_{j+1} - \gamma_{j}) L + \alpha^\Delta_{garo} \phi(\gamma) + \alpha^\Delta_{garo} L' (\gamma_{j+1} - \gamma)\\
    & \leq (\gamma_{j+1} - \gamma_{j}) L + \alpha^\Delta_{garo} \phi(\gamma) + \alpha^\Delta_{garo} L' \left(\gamma_{j+1}- \gamma_{j}\right)
\end{align*}
where the first inequality follows from Assumption \ref{ass:lipschitz}, the second inequality follows from Lipschitz continuity of $\phi$ with constant $L'$, and the third inequality follows since $\gamma_j\le \gamma$.
\end{proof}

In a wide array of problems, including those described in Sections \ref{sec:nomin-robust-form} and \ref{sec:line-form-with}, the mapping $\gamma \mapsto v_{wc}(x, \gamma) \defn \max_{p\in P_\gamma} f(x,p)$ is Lipschitz continuous and hence Assumption \ref{ass:lipschitz} indeed holds. For instance, when the worst-case cost function $\gamma \mapsto v_{wc}(x, \gamma)$ is concave in $\gamma$, then $v^\star_{wc}(\gamma) = \min_{x\in X} v_{wc}(x, \gamma)$ is a concave function as well and Assumption \ref{ass:lipschitz} holds with Lipschitz constant $L=\partial_\gamma v^\star_{wc}(0) = \partial_{\gamma} v_{wc}(x_{nom}, 0)$ by the envelope theorem.

\begin{examplebox}
Reconsider the uncertain linear regression problem
\(
  \min_{x\in X} \norm{Ax-b}
\)
where the matrix $A$ is uncertain. 
We have
\(
  v_{wc}(x, \gamma) \defn \max_{\|A-A_0\|\leq \gamma}\norm{Ax-b}  = \norm{A_0x-b} + \gamma \norm{x}
\)
is affine and hence concave. It follows that Assumption \ref{ass:lipschitz} holds for $L=\norm{x_{nom}} = \|A_0^\dagger b\|$.
Similarly, the uncertain linear optimization problem in Section \ref{sec:line-form-with} where
\(
  v_{wc}(x, \gamma) \defn \max_{\|p-p_0\|\leq\gamma} x\tpose p = x\tpose p_0+\gamma \norm{x}_\star
\)
which is affine and hence concave. It follows that Assumption \ref{ass:lipschitz} holds for $L=\norm{x_{nom}}_\star$.
\end{examplebox}

\begin{examplebox}
  Consider an uncertain stochastic optimization problem $\min_{x\in X} \E{\mb P}{\ell(x, \xi)}$ where the distribution $\P$ is uncertain.
  We assume here that $\P$ is supported on $\Xi$ and that the $\ell_{\min} \leq \ell(x, \xi) \leq \ell_{\max}$ for all $x\in X$ and $\xi\in \Xi$. 
  Suppose we have access to a prediction $\P_0$ for which we expect the Kullback-Leibler distance to $P$ to remain small.
  Given that the Kullback-Leibler distance behaves quadratically, i.e., $\KL(\lambda\P_0+(1-\lambda)\P, \P)=\frac {\lambda^2}{2} \chi^2(\P_0, \P)+o(\lambda^2)$ with $\chi^2(\P_0, \P)\defn \int \left(\tfrac{\d\P_0}{\d \P}(\xi)-1\right)^2\d \P(\xi)$, we will  impose  $\KL(\P_0, \P)\leq \gamma^2$.
  A well known result from the optimization literature \citep{vanparys2021data} is that the worst-case cost function satisfies the dual characterization
  \begin{align*}
    v_{wc}(x, \gamma) =& \left\{
    \begin{array}{r@{~~}l}
      \displaystyle\max_{\P} & \E{\P}{\ell(x, \xi)}\\[0.5em]
      \st & \KL(\P_0, \P)\leq \gamma^2
    \end{array}\right.\\
    =& \left\{
    \begin{array}{r@{~~}l}
      \displaystyle\min_{\alpha} & \alpha - \exp(-\gamma^2) \exp\left(\E{\P_0}{\log(\alpha-\ell(x,\xi))}\right)\\[0.5em]
      \st & \max_{\xi\in \Xi} \ell(x, \xi) \leq \alpha
    \end{array}\right.
  \end{align*}
  and can be evaluated efficiently using simple bisection search. Furthermore, as Lemma \ref{lemma:KL-lips} points out the worst-case function is uniformly Lipschitz in the decision $x\in X$ and hence Assumption \ref{ass:lipschitz} is satisfied.
\end{examplebox}

\subsection{Constraint Generation}

The number of discretization points required to achieve a small constraint violation via the discretization approach in the previous section can be large when working with large Lipschitz constants or a large interval $\Gamma$. In this case it could be beneficial to iteratively discretize $\Gamma$ by calling a separation oracle which finds the $\gamma$ value that violates the constraint in \ref{eq:glob-rob-regret}; see Algorithm \ref{alg:general_case}.

\begin{assumption}
  \label{ass:global-lipschitz}
  Let $f(x,p)$ be Lipschitz continuous on $x\in X$ uniformly for all $p\in P_{\max\Gamma}$, $X$, $\Gamma$ and $P_\gamma$ compact sets for every $\gamma\in \Gamma$, and $\phi$ Lipschitz over $\Gamma$.
\end{assumption}

\begin{assumption}
  \label{ass:separation}
  We assume we can solve the univariate separation problem
  \[
    \max_{\gamma\in \Gamma} \ \max_{p\in P_\gamma} f(x, p) - \min_{x' \in X}\max_{p'\in P_\gamma} f(x', p') -\alpha \phi(\gamma)
  \]
  for any $x\in X$ and $\alpha\geq 0$.
\end{assumption}

We show that under Assumptions \ref{ass:global-lipschitz} and \ref{ass:separation} constraint generation Algorithm \ref{alg:general_case} converges to an approximate solution in our adversarial regret optimization formulation.

\begin{algorithm}
\caption{Constraint Generation Algorithm for \ref{eq:glob-rob-regret}}
\begin{algorithmic}[1]
\Require $X\subset \Re^n$ compact, $f(x,p)$ Lipschitz continuous in $x$ for every $p\in P$, interval $\Gamma$, $\varepsilon >0$
\Ensure $\varepsilon$-optimal solution of Problem \ref{eq:glob-rob-regret}

\State Set $C \leftarrow \emptyset$
\State Set $v_{wc}^\star\leftarrow\infty$
\While{$v_{wc}^\star\geq \varepsilon$}
\State Calculate an optimal solution $(x^\star,\alpha^\star)$ in the main problem
\[
  \begin{array}{r@{~}l}
    \displaystyle\min_{x\in X, \,\alpha\ge 0} & \alpha\\[0.5em]
    \st & \displaystyle\max_{p\in P_\gamma} \left[ f(x, p) - \min_{x' \in X}\max_{p'\in P_\gamma} f(x', p') \right] \leq\alpha \phi(\gamma) \quad \forall \gamma\in C.
  \end{array}
\]
\State Calculate an optimal solution $\gamma^\star$ of the separation problem
    \[
    v_{wc}^\star\leftarrow\max_{\gamma\in \Gamma} \ \max_{p\in P_\gamma} \left[ f(x^\star, p) - \min_{x' \in X}\max_{p'\in P_\gamma} f(x', p') \right] -\alpha^\star \phi(\gamma).
    \]
\State Set $C\leftarrow C\cup \{ \gamma^\star\}$.
\EndWhile\\
\Return $(x^\star,\alpha^\star)$
\end{algorithmic}
\label{alg:general_case}
\end{algorithm}

\begin{theorem}
Let Assumptions \ref{ass:global-lipschitz} and \ref{ass:separation} hold and let \ref{eq:glob-rob-regret} be feasible. Then, Algorithm \ref{alg:general_case} converges to an $\varepsilon$-feasible solution of our \ref{eq:glob-rob-regret}.
\end{theorem}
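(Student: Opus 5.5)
The argument is the standard compactness argument for cutting-plane and constraint-generation methods, applied to the one-dimensional parameter $\gamma$. Write
\[
g(x,\alpha,\gamma) \defn v_{wc}(x,\gamma) - v_{wc}^\star(\gamma) - \alpha\phi(\gamma).
\]
In this notation the main problem enforces $g(x,\alpha,\gamma)\le 0$ for $\gamma\in C$. The separation value computed in the loop is $\max_{\gamma\in\Gamma} g(x^\star,\alpha^\star,\gamma)$.

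\textbf{Step 1: the main problem is well posed and its values are bounded.} Since \ref{eq:glob-rob-regret} is feasible, it has a feasible pair $(\bar x,\bar\alpha)$. This pair is feasible for every relaxation with finite $C$, so every master optimum satisfies $\alpha^\star\le\bar\alpha$. The iterates $(x^k,\alpha^k)$ therefore lie in the compact set $X\times[0,\bar\alpha]$. Attainment of the master optimum follows because $x\mapsto v_{wc}(x,\gamma)$ is continuous, being a maximum of functions that are uniformly Lipschitz by Assumption~\ref{ass:global-lipschitz}.

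\textbf{Step 2: $g$ is Lipschitz in $(x,\alpha)$, uniformly in $\gamma$.} Let $L_f$ be the Lipschitz constant of $f$ in $x$ on $P_{\max\Gamma}$. Because $P_\gamma\subseteq P_{\max\Gamma}$, the map $x\mapsto v_{wc}(x,\gamma)$ is $L_f$-Lipschitz for every $\gamma$. The term $v^\star_{wc}(\gamma)$ does not depend on $(x,\alpha)$. Moreover, $\phi$ is bounded on the compact set $\Gamma$, say by $\Phi$, since it is Lipschitz there. Hence
\[
|g(x,\alpha,\gamma)-g(x',\alpha',\gamma)| \le L_f\|x-x'\| + \Phi|\alpha-\alpha'|
\]
for all $\gamma$. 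Only regularity in $(x,\alpha)$ is needed, not in $\gamma$, which avoids any continuity assumption on $v_{wc}^\star$ in $\gamma$.

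\textbf{Step 3: finite termination by contradiction.} Suppose the loop never stops. Then at each iteration $k$ the separator $\gamma^k$ satisfies $g(x^k,\alpha^k,\gamma^k)\ge\varepsilon$. For every $j>k$ the constraint at $\gamma^k$ is in $C$, so $g(x^j,\alpha^j,\gamma^k)\le 0$. Subtracting and applying Step 2 gives
\[
\varepsilon \le L_f\|x^j-x^k\|+\Phi|\alpha^j-\alpha^k| \qquad \text{for all } j>k.
\]
So the iterates are pairwise separated by a fixed positive distance in a suitable weighted norm. This is impossible for an infinite sequence in the compact set $X\times[0,\bar\alpha]$, since such a sequence has a convergent, and hence Cauchy, subsequence. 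The loop therefore terminates after finitely many iterations. At termination $\max_\gamma g(x^\star,\alpha^\star,\gamma)<\varepsilon$, which is exactly $\varepsilon$-feasibility in \ref{eq:glob-rob-regret}.

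\textbf{Near-optimality as a bonus.} Each master problem is a relaxation of \ref{eq:glob-rob-regret}, so $\alpha^\star\le\alpha_{garo}$. The returned pair is thus $\varepsilon$-feasible with objective no worse than the optimum.

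I expect the main obstacle to be Step 2 together with master attainment. One must check that the oracle term, which may behave poorly in $\gamma$, enters only as a constant for fixed $\gamma$, and that $\phi$ is bounded. One must also handle the edge case where $\Phi=0$ or $L_f=0$. If both vanish, $g$ is independent of $(x,\alpha)$, so the first added cut already certifies that the separation value is at most $0<\varepsilon$.
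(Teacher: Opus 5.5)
Your proof is correct and follows the paper's route. Both restrict $(x,\alpha)$ to the compact set $X\times[0,\bar\alpha]$ using a feasible point of \ref{eq:glob-rob-regret}, and both show that the constraint function is Lipschitz in $(x,\alpha)$ uniformly over $\gamma\in\Gamma$, with constants $L_f$ and $\max_{\gamma\in\Gamma}\phi(\gamma)$.

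The only difference is that you carry out the finite-termination argument (separated iterates contradicting compactness) explicitly, whereas the paper invokes the convergence analysis of \citet[Section 5.2]{mutapcic2009cutting}. Your edge-case remark is unnecessary: when $L_f=\Phi=0$, the inequality in your Step 3 already reads $\varepsilon\le 0$, which is the contradiction.
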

\begin{proof}
  Since \ref{eq:glob-rob-regret} is feasible there exists a solution $(\bar x, A)$ satisfying
  \[
    \max_{p\in P_\gamma} \left[ f(\bar x, p) - \min_{x' \in X}\max_{p'\in P_\gamma} f(x', p') \right] \leq A \phi(\gamma) \quad \forall \gamma\in \Gamma.
  \]
  We can hence restrict $(x, \alpha)$ in \ref{eq:glob-rob-regret} to the compact set $X\times [0, A]$ without loss of optimality.

    Following the convergence analysis in \cite[Section 5.2]{mutapcic2009cutting}, our cutting plane algorithm converges if the constraint function
    \[
      g(x, \alpha; \gamma) = \max_{p\in P_\gamma}  f(x, p) - \min_{x' \in X}\max_{p'\in P_\gamma} f(x', p')  -\alpha \phi(\gamma)
    \]
    are Lipschitz continuous in the decision variables, i.e., $x$ and $\alpha$, uniformly over the quantifiers $\gamma \in \Gamma$.
    Since $f$ is Lipschitz continuous in $x$ uniformly over $p\in P_{\max \Gamma}$ the worst-case cost $(x, \alpha) \mapsto v_{wc}(x, \gamma) \defn \max_{p\in P_\gamma} f(x,p)$ must be Lipschitz continuous for every $\gamma$ since
    \[
    v_{wc}(x_1, \gamma) - v_{wc}(x_2, \gamma) \le f(x_1,p^\star) - f(x_2,p^\star) \le L \| x_1 - x_2\|
    \]
    where $p^\star$ is a maximizer of $\max_{p\in P_\gamma} f(x_1,p)$ and $L$ the uniform Lipschitz constant of $f(x,p)$ in $x\in X$ over $p\in P_{\max \Gamma}$.
    Clearly $(x, \alpha)\mapsto\alpha \phi(\gamma)$ is Lipschitz continuous (and linear) in $\alpha$ with Lipschitz constant $\max_{\gamma\in\Gamma}\phi(\gamma) < \infty$, and the claim follows.
\end{proof}

By the same reasoning as in Section~\ref{sec:discretization}, $\varepsilon$-feasibility also implies an optimality guarantee in the nontrivial case $\phi_{\min}>0$.
At termination, $(x^\star,\alpha^\star)$ satisfies $A(x^\star,\gamma)\leq \alpha^\star\phi(\gamma)+\varepsilon$ for all $\gamma\in\Gamma$, so $(x^\star,\,\alpha^\star+\varepsilon/\phi_{\min})$ is feasible in \ref{eq:glob-rob-regret} and hence $\alpha_{garo}\leq\alpha^\star+\varepsilon/\phi_{\min}$.
Combined with $\alpha^\star\leq\alpha_{garo}$, we obtain $0\leq\alpha_{garo}-\alpha^\star\leq\varepsilon/\phi_{\min}$.

The separation problem over $\gamma$ which has to be solved in Algorithm \ref{alg:general_case} is a univariate maximization problem.
When the function $\gamma \to \max_{p\in P_\gamma} f(x, p)$ is Lipschitz this can be done to any desired accuracy using the Shubert--Piyavskii algorithm.
If $\gamma \to \max_{p\in P_\gamma} f(x, p)$ is concave, this could also be done by exploiting DC-programming algorithms.
However, in certain situations when the objective function is linear, the separation problem can be calculated exactly; see Section~\ref{sec:line-form-with}.

\begin{examplebox}
  We consider a generalized version of the uncertain linear optimization problem discussed in Section \ref{sec:line-form-with} in which $P_\gamma=\set{p\in P}{\|p-p_0\|\leq \gamma}$ and where now $P$ can be an arbitrary convex polytope. Consider support function $h_P(u)=\max_{p\in P} u\tpose p$.
  In the linear case $f(x,p)=x\tpose p$ we have that
  \(
    v_{wc}(x, \gamma) \defn \max_{p\in P_\gamma} f(x, p) = \min_{y}  y\tpose p_0 +\norm{y}_\star\gamma+h_P(x-y) .
  \)

  Observe that as here $\|\cdot \|_\star$ is a polyhedral norm, $X$ a polytope and $h_P$ a linear representable function 
  both
  \begin{align}
    \begin{split}
    v_{wc}(x,\gamma) \defn  & \min_{y}  y\tpose p_0 +\norm{y}_\star\gamma+h_P(x-y) \quad \forall \gamma\in\Gamma\\
      v^\star_{wc}(\gamma)\defn  \min_{x\in X} \max_{p\in P_\gamma} f(x, p)=& \min_{x'\in X, y'}  {y'}\tpose p_0 +\norm{y'}_\star\gamma+h_P(x'-y') \quad \forall \gamma\in\Gamma
    \end{split}
    \label{eq:two-problems}
  \end{align}
  are linear optimization problems.
  Hence, for any fixed decision $x$ we can partition $\Gamma=\cup_{t=1}^{T} \Gamma_t$ such that on each $\Gamma_t=[\gamma_t, \gamma_{t+1})$ we can find extreme points $y_t$ and $(x_t',y_t')$ in \eqref{eq:two-problems} so that
  \begin{align*}
    \begin{split}
      v_{wc}(x,\gamma) \defn  & y_t\tpose p_0 +\|y_t\|_\star\gamma+h_P(x-y_t) \quad \forall \gamma\in \Gamma_t\\
      v^\star_{wc}(\gamma)\defn  \min_{x\in X} \max_{p\in P_\gamma} f(x, p)=& {y_t'}\tpose p_0 +\norm{y'_t}_\star\gamma+h_P(x'_t-y'_t)\quad  \forall \gamma\in \Gamma_t
    \end{split}
  \end{align*}
  Hence, using the same argument as in Section \ref{sec:line-form-with} we can write the separation problem as a maximum of at most $T$ terms, i.e.,
  \begin{align*}
  & \max_{\gamma\in \Gamma} \ \max_{p\in P_\gamma} \left[ f(x, p) - \min_{x' \in X}\max_{p'\in P_\gamma} f(x', p') \right] -\alpha\phi(\gamma)\\
 = & \max_{t=1}^{T}~ (y_t-y'_t)\tpose p_0 +h_P(x-y_t)-h_P(x'_t-y'_t) + \alpha \phi^\star_t\left(\tfrac{\left(\norm{y_t'}_\star-\norm{y_t}_\star\right)}{\alpha}\right)
  \end{align*}
  where the convex conjugate $\phi^\star_t$ associated with each $\Gamma_t$ is stated in Equation \eqref{eq:convex-conjugate}.
\end{examplebox}

\section{A Minimum Knapsack Experiment}\label{sec:Experiments}
We apply the algorithmic framework of Section~\ref{sec:algorithms} to minimum knapsack problems with uncertainty sets constructed from randomly generated data to assess the practical performance of GARO. The code for the experiments is publicly available at \url{https://github.com/JannisKu/GARO.git}.

\subsection{Setup and Baseline Methods}
We consider as deterministic problem the minimum knapsack problem of the form
\[
  \begin{array}{r@{~}l}
    \min_{x} & x\tpose p \\
    \st & a\tpose x \ge b \\
        & x\in [0,100]^n.
  \end{array}
\]
The bounds on the variables prevent unbounded problems when a random scenario $p$ has negative entries. Furthermore, we refrain from applying an upper bound of one to each variable since the effect of the robust models in this case is only marginal. Intuitively, for the chosen variant a nominal solution is sparse, while robustness has the effect of reducing this sparsity.

We assume a training set $\mc D$ and a test set $\mc T$ of historical data points in $80/20$ proportion. We then define the uncertainty set
\[
P_\gamma:=\{ p\in \Re^n: (p-p_0)\tpose \tilde\Sigma^{-1} (p-p_0)\le \gamma\}
\]
where $p_0$ is the empirical mean prediction and $\tilde \Sigma$ the empirical covariance matrix of $\mc D$. If the smallest eigenvalue of $\tilde \Sigma$ is smaller than $10^{-4}$, we add the matrix $10^{-4}\mathbb I_n$ to $\tilde \Sigma$. For a given confidence level $\rho\in [0,1)$, we define $\gamma_\rho$ as the smallest value such that $P_{\gamma_\rho}$ contains a fraction $\rho$ of all training data points. We define the interval for the $\gamma$-parameter as $\Gamma = [0, \gamma_{0.99} ]$. Choosing $\gamma_{0.99}$ instead of $\gamma_{1.0}$ avoids calibrating the interval to the single most extreme sample, yielding a more stable estimate of the effective data range.

We compare our method (GARO) against several baseline methods:
\begin{itemize}
\item (RO) robust optimization with ellipsoidal uncertainty set,
\item (RO$_d$) robust optimization with discrete uncertainty set,
\item (SAT) robust satisficing with ellipsoidal uncertainty set, and
\item (REG) classical robust regret with discrete uncertainty set.
\end{itemize}
We chose the discrete version of robust regret since the same problem with convex uncertainty sets is computationally intractable. We briefly describe below how each method is solved. We denote by $X:=\{ x\in\Re^n : a\tpose x \ge b,\ x\in[0,100]^n\}$ the feasible set.

\paragraph*{Classical Robust Optimization with Convex Uncertainty (RO)}
For a given $\gamma\in\Gamma$, we solve the problem
\(
\min_{x\in X} \max_{p\in P_{\gamma}} x\tpose p
\)
which can be reformulated as
\begin{equation}\label{eq:reformulation_RO_computations}
\min_{x\in X} x\tpose p_0 + \sqrt{\gamma}\sqrt{x\tpose \tilde\Sigma x}.
\end{equation}
\paragraph*{Classical Robust Optimization with Discrete Uncertainty (RO$_d$)}
For a given $\gamma\in\Gamma$, we collect all training scenarios which are contained in $P_{\gamma}$, i.e., we define $\mc D(\gamma):=\mc D\cap P_{\gamma}$ and solve the problem
\(
\min_{x\in X} \max_{p\in\mc D(\gamma)} x\tpose p
\)
which can be reformulated as
\[
  \begin{array}{r@{~}l}
    \min_{x\in X,\alpha} & \alpha \\
    \st & x\tpose p \le \alpha \quad \forall p\in \mc D(\gamma).
  \end{array}
\]

\paragraph*{Robust Satisficing (SAT)}
We discretize the interval $[0,\gamma_{0.99}]$ into $T+1=100$ equidistant values $0=\gamma_1 \le \ldots \le \gamma_{T+1} = \gamma_{0.99}$. We then calculate the oracle nominal cost $v^\star_{wc}(0) = \min_{x\in X} x\tpose p_0$. For a given factor $\beta \in [1,\infty)$ we define the target value as $f_{0}=\beta v^\star_{wc}(0)$ and then solve the robust satisficing problem
\[
  \begin{array}{r@{~}l}
    \min_{x\in X,\alpha\geq 0} & \alpha \\
    \st & \max_{p\in P_{\gamma_t}} x\tpose p \le f_{0} + \alpha \gamma_t \quad \forall t\in [1,\ldots,T+1].
  \end{array}
\]

\paragraph*{Robust Regret (REG)}
The regret formulation for a given $\gamma\in\Gamma$ solves $\min_{x\in X} \max_{p\in P_\gamma} [x\tpose p - \min_{x'\in X} {x'}\tpose p]$, which is computationally intractable due to the nonconvexity induced by the inner minimization.
To allow computation, we consider an approximation in which the outer maximization ranges only over the training scenarios contained in $P_\gamma$.
We define $\mc D(\gamma):= \mc D\cap P_{\gamma}$ and solve the approximate problem
\(
\min_{x\in X} \max_{p\in\mc D(\gamma)} \left[ x\tpose p - \min_{x'\in X} {x'}\tpose p\right]
\)
which can be reformulated as
\[
  \begin{array}{r@{~}l}
    \min_{x\in X,\alpha} & \alpha \\
    \st & x\tpose p \le \alpha + \min_{x'\in X} {x'}\tpose p \quad \forall p\in \mc D(\gamma).
  \end{array}
\]

\paragraph*{Globalized Adversarial Regret Optimization (GARO)}
We discretize the interval $[0,\gamma_{0.99}]$ into $T+1=100$ equidistant values $0=\gamma_1 \le \ldots \le \gamma_{T+1} = \gamma_{0.99}$. For each $\gamma_t$ we calculate the optimal value $v^\star_{wc}(\gamma_t)$ of the classical robust problem, i.e.,
\[
v^\star_{wc}(\gamma_t):=\min_{x\in X} \ \max_{p\in P_{\gamma_t}} \ x\tpose p.
\]
We use the power rate function $\phi(\gamma)=(1+\gamma)^q$ with parameter $q\geq 0$ introduced in Example~\ref{example:rate-function}, which interpolates between the absolute guarantee ($q=0$) and increasingly permissive adversarial regret growth for larger $q$.
We then solve the discretized global robust regret problem
\[
  \begin{array}{r@{~}l}
    \min_{x\in X,\alpha} & \alpha \\
    \st & \max_{p\in P_{\gamma_t}} x\tpose p - v^\star_{wc}(\gamma_t) \le \alpha(1+\gamma_t)^q \quad \forall t\in [1,\ldots,T+1].
  \end{array}
\]
The maximization term can be reformulated as shown for the classical robust optimization baseline.

\paragraph*{Instance Generation}
We generate $5$ random problem instances and $5$ random data sets, each containing $m=5000$ random observations on the profit vector $p$, which we split into training data $\mc D$ and test data $\mc T$ in $80/20$ proportion. The problem instances are generated as follows:
\begin{enumerate}[(i)]
    \item We set $n=50$ and draw five different weight vectors $a$ uniformly from the box $[25,100]^n$.
    \item We define the knapsack capacity as $b=\frac{2}{5} \sum_{i=1}^na_i$.
\end{enumerate}
For the data sets we consider different types of randomly generated data:
\begin{enumerate}[(a)]
    \item \textbf{Gaussian}: We draw $\mu$ uniformly from the box $[0,50]^n$ and the standard deviation $\sigma_i$ uniformly from $[0,\frac{1}{2}\mu_i]$ for each $i\in[n]$. We additionally draw a random orthonormal basis $u^1, \ldots , u^n\in \Re^n$ and define the covariance matrix as $\Sigma = \sum_{i=1}^{n} \sigma_i^2 u^i(u^i)^\top$. We then sample $m$ random points from $\mathcal N(\mu, \Sigma)$.
    \item \textbf{Gaussian with Inverse Mean-Variance Relationship}: The points are generated as in the Gaussian case, except that the standard deviation $\sigma_i$ is chosen uniformly from $[0,50-\mu_i]$ for each $i\in [n]$. This ensures that large mean values are paired with small standard deviations and vice versa. This choice is motivated by the observation that in the robust case the size of the uncertainty has a larger impact on the ordering of entries in the worst-case scenario, which intuitively leads to more diverse solutions for different $\gamma$ values.
    \item \textbf{Heavy-Tail}: We draw $m$ random points from the Pareto distribution with tail index $1.5$, and add the all-ones vector $\mathbf{1}_n$ to each drawn point.
\end{enumerate}

\subsection{Preliminary Analysis of Classical Robust Optimization}
We start with an analysis of RO with Gaussian samples to introduce our evaluation metrics and show the drawbacks of the guarantees provided by the model. A critical question in RO is how to choose $\gamma$ for the uncertainty set $P_\gamma$ to obtain a solution which performs well out-of-sample under a given set of performance metrics. To determine a good value for $\gamma$, we solve RO with radius $\gamma = \theta \gamma_{0.99}$ where $\theta\in \{ 0.7^i: i=0,1,\ldots ,25\}$, which we denote by RO($\theta$). Note that for $\theta=0$ we obtain the nominal solution associated with the prediction $p_0$, while for $\theta=1$ we use an ellipsoid containing $99\%$ of the training data. For each $\gamma$ we solve each combination of the $5$ problem instances and the $5$ training data sets, resulting in $25$ instances.

Each solution is evaluated on all test scenarios $p\in \mc T$, i.e., we calculate the objective value $x\tpose p$ for every solution $x$ and test scenario $p$, and evaluate the average, worst-case and $90\%$-quantile performance. While robust optimization typically aims for solutions with good performance in the upper quantiles, these solutions should also not perform too poorly on average. Hence, decision makers are often interested in the trade-off between average and worst-case performance. In Figure \ref{fig:trade_off_RO} we show the average out-of-sample performance over all $25$ instances, plotting average vs.\ worst-case (left) and average vs.\ $90\%$-quantile (right).
\begin{figure}
    \centering
\includegraphics[width=0.9\textwidth]{ 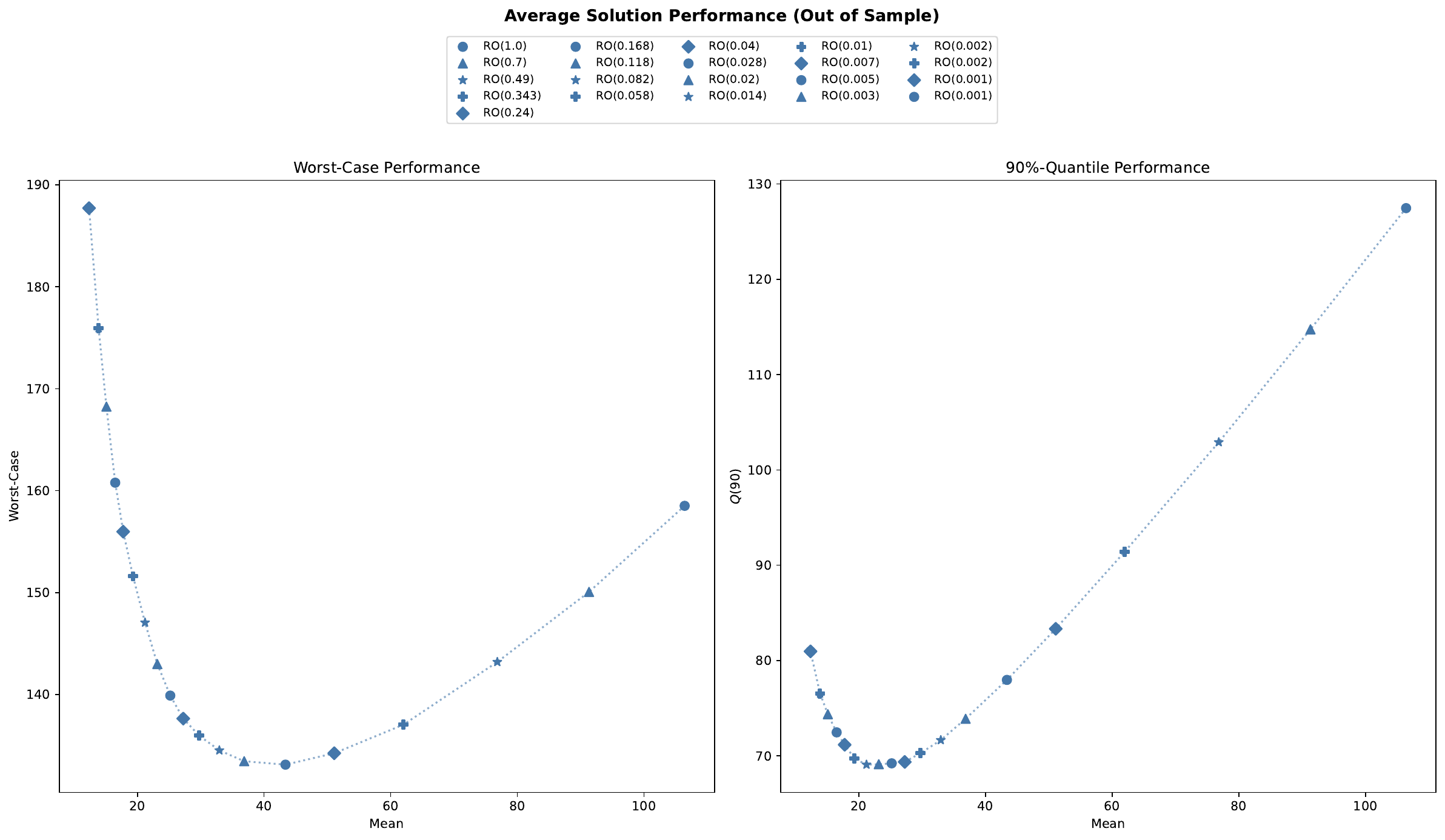}
    \caption{Out-of-sample performance of RO for the minimum knapsack problem with $n=50$ for Gaussian data. Mean vs.\ worst-case (left) and mean vs.\ $90\%$-quantile (right).}
    \label{fig:trade_off_RO}
\end{figure}
The results show that good trade-offs between average and worst-case (or $90\%$-quantile) are achieved for very small $\gamma$ radii. Beyond $\theta = 0.168$, the performance in both average and worst-case (or $90\%$-quantile) already deteriorates as $\theta$ increases. The performance is also highly sensitive to $\gamma$ for small values thereof.

In practice, finding a good value for $\gamma$ as in the previous analysis can be done via an empirical out-of-sample performance test on a validation dataset. However, due to limited or biased data, such empirical tests can be highly imprecise and hence misleading regarding the future performance of the calculated solution. To obtain a more reliable performance guarantee, one can instead consider the guarantee offered by the RO model itself. More precisely, for an optimal solution $x_{RO(\theta)}$ of RO($\theta$) with optimal value opt(RO($\theta$)), we have
\[
x_{RO(\theta)}\tpose p^\star \le \begin{cases}
    \text{opt(RO}(\theta)) & \text{ if } \gamma^\star = d(p_0,p^\star)\leq \theta\gamma_{0.99} \\
    \infty &  \text{ otherwise.}
\end{cases}
\]
These performance guarantees, plotted as a function of $d(p_0, p)$, are shown in Figure \ref{fig:guarantees_RO}. All values are averages over the $25$ instances. For each test sample $p\in \mc T$, the value $d(p_0, p)$ is indicated on the horizontal axis with a cross. The horizontal axis is normalized such that $1$ corresponds to the largest $\gamma_{0.99}$ over all problem instances. Since $\gamma_{0.99}$ is computed from the training data only, test samples may exceed this value significantly.

\begin{figure}
    \centering
\includegraphics[width=0.9\textwidth]{ 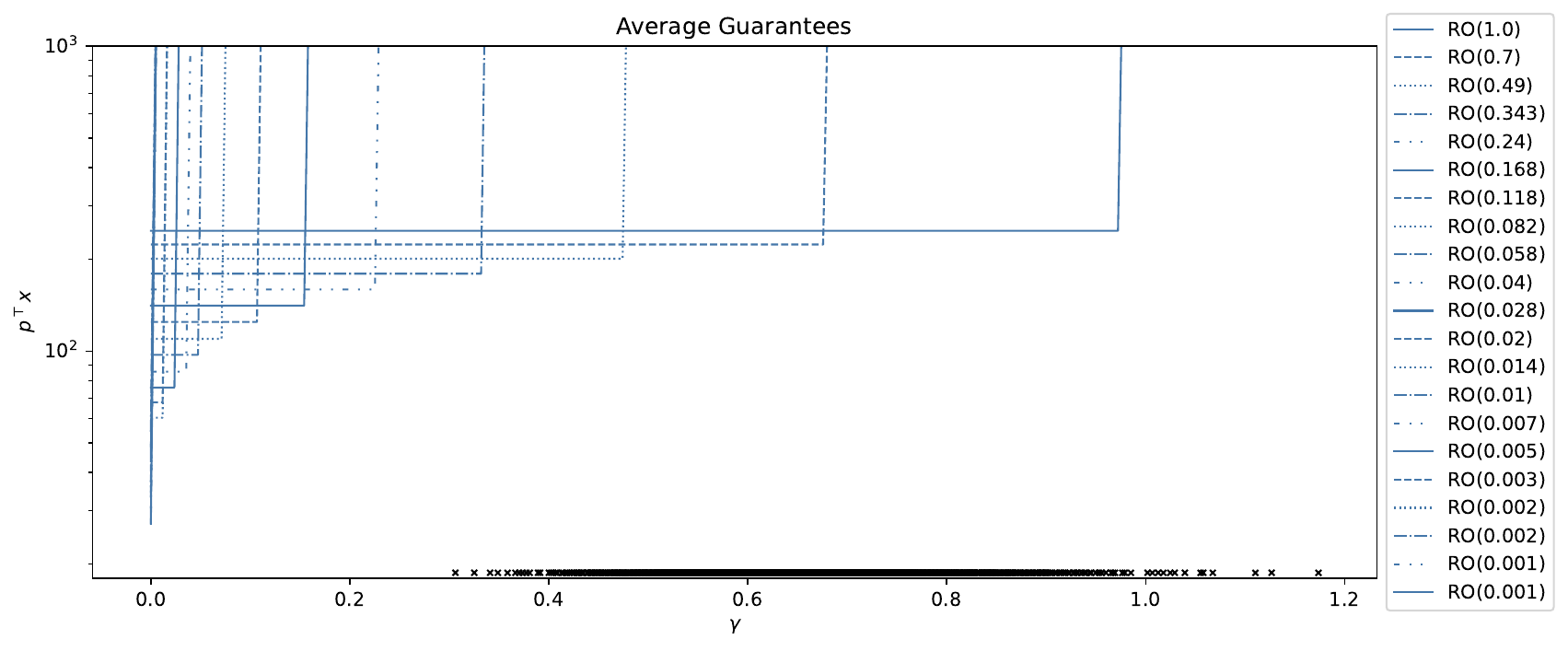}
    \caption{Performance guarantees of RO$(\theta)$ in the minimum knapsack problem with $n=50$ for Gaussian data. Its guarantee is all-or-nothing: it is constant for $d(p_0, p) \leq \theta\gamma_{0.99}$ and is vacuous beyond this threshold.}
    \label{fig:guarantees_RO}
\end{figure}

The results show that the guarantees for the potential values of $\theta$ which lead to good trade-offs ($0\le \theta \le 0.12$) are practically useless since they only hold for scenarios $p$ close to the mean. However, this region does not contain a single test scenario and hence the robust optimization guarantee does not apply to any test scenario. In fact, the uncertainty sets $P_\gamma$ which lead to a good out-of-sample performance do not contain a single training point. This stark observation highlights the impracticality of the guarantees classical robust formulations provide and the need for methods offering broader performance guarantees. Furthermore, the size of the uncertainty set does not provide any intuitive insight into the final performance of the corresponding solution. Indeed, the box-plots in Figure~\ref{fig:boxplots_RO} suggest that the only notable benefit of increasing the uncertainty set is a reduction in the variance of the out-of-sample performance. This is unsurprising: from the reformulation \eqref{eq:reformulation_RO_computations}, larger $\gamma$ values assign greater weight to the standard deviation term in the objective.

\begin{figure}
    \centering
    \includegraphics[width=0.7\textwidth]{ 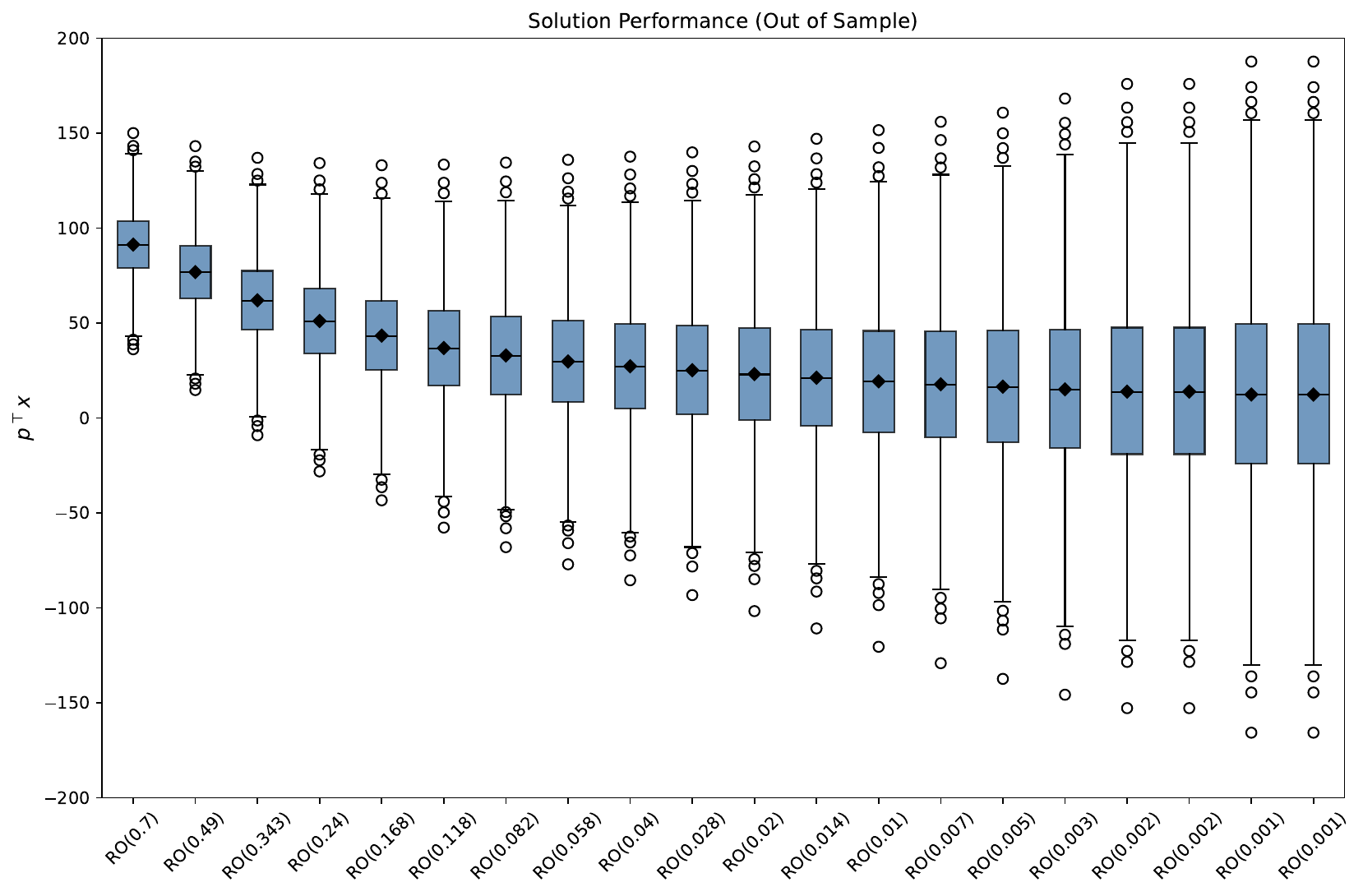}
    \caption{Boxplots of the out-of-sample objective values for the minimum knapsack problem with $n=50$ for Gaussian data. The diamonds denote the mean value.}
    \label{fig:boxplots_RO}
\end{figure}

\subsection{Gaussian Data}
We now turn to the main experiment comparing the different methods mentioned above. For the baseline methods and GARO we consider the following setups, chosen based on the best worst-case performance of each method in preliminary experiments. For RO we consider $\gamma = \theta \gamma_{0.99}$ where $\theta\in \{ 0, 0.02,\ldots ,0.08\}$, and for RO$_d$ and REG we consider values $\theta\in \{ 0.1,\ldots ,0.5\}$. For SAT we consider target values of $\beta\in \{ 1.2, 1.4, \ldots ,2.0\}$, and for GARO we consider exponents $q\in \{0,0.5,\ldots, 2.0 \}$ in the rate function $\phi(\gamma)=(1+\gamma)^q$. We denote the previously described methods as RO($\theta$), RO$_d$($\theta$), SAT($\beta$), REG($\theta$) and GARO($q$).
For each type of data generation process we solve each combination of the $5$ problem instances and the $5$ training data sets, resulting in $25$ instances.

\begin{figure}
    \centering
    \includegraphics[width = 0.9\textwidth]{ 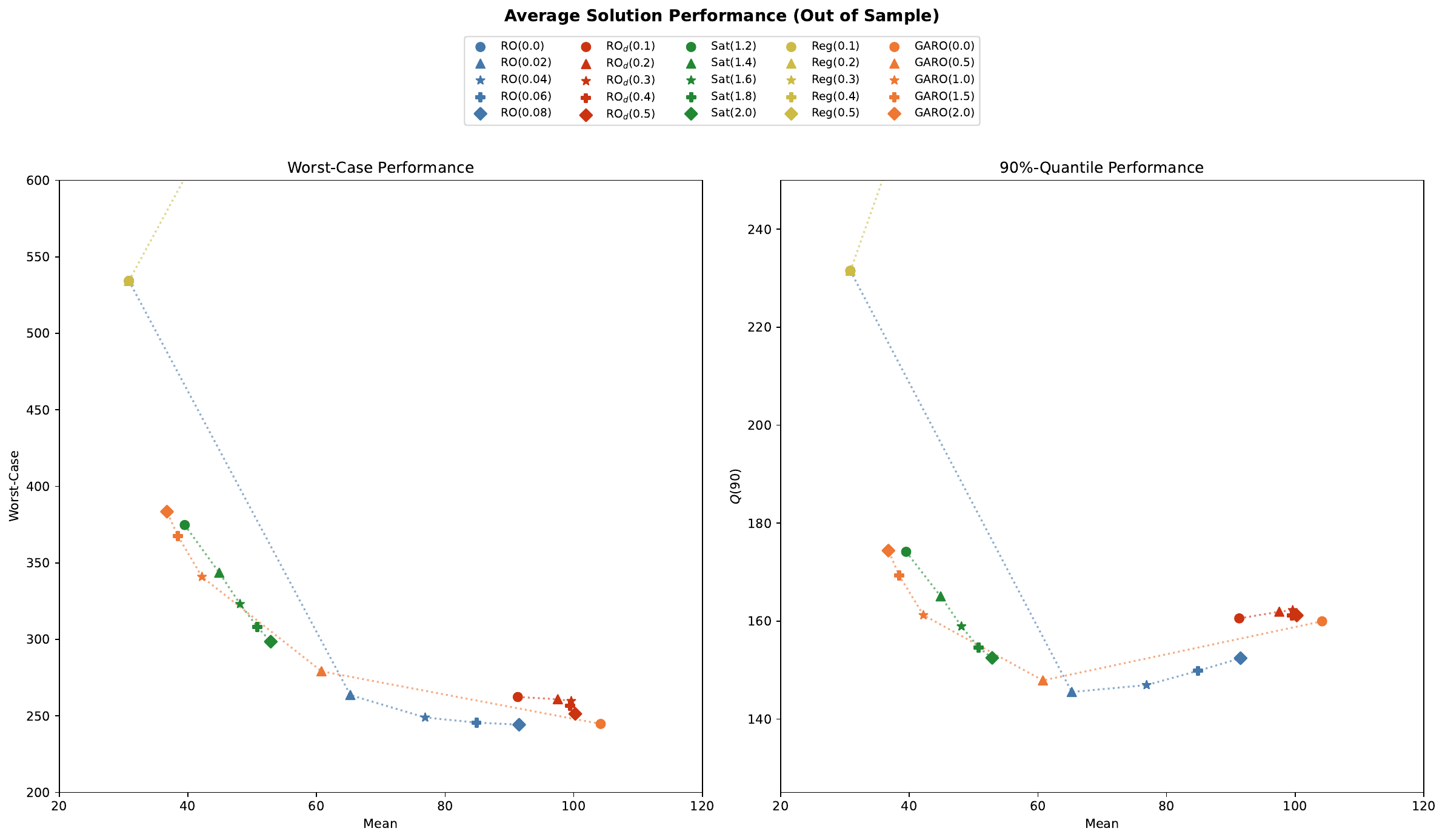}
    \caption{Out-of-sample performance for the minimum knapsack problem with $n=50$ for Gaussian data. Average vs.\ worst-case (left) and average vs.\ $90\%$-quantile (right).}
    \label{fig:gaussian_avg_wc}
\end{figure}

In Figure \ref{fig:gaussian_avg_wc} we show the average out-of-sample cost over all $25$ instances, plotting average vs.\ worst-case (left) and average vs.\ $90\%$-quantile (right) for all methods. Each value is an average over all $25$ instances. The points for REG($0.3$), REG($0.4$) and REG($0.5$) are not shown in the plot since they lead to very large values on average and in worst-case and $90\%$-quantile costs.

The results show that the points of our method for $q\ge 0.5$ provide a good trade-off lying close to the lower left corner of the plot and dominate the solutions of the other methods. The solutions of SAT provide similar values but are sensitive to the choice of the target level. In fact, for different target levels SAT will return one of the RO solutions for a certain $\gamma$-value, and indeed any point of the RO trade-off curve could be generated by setting a corresponding target level (see the discussion in Section \ref{sec:satisficing}).

The solutions of RO and RO$_d$ provide good values for the worst-case costs with large increases in mean cost, where RO$_d$ suffers most. RO$_d$ deteriorates also for the $90\%$-quantile compared to RO. Only the nominal solution provides the best mean performance, at the cost of large worst-case and $90\%$-quantile costs (upper left corner). REG returns the nominal solution for $\theta\in \{ 0, 0.1, 0.2\}$ but leads to very poor performance in mean, worst-case and $90\%$-quantile costs for larger $\theta$ values, which are not visible on the plot.

Some of the studied methods provide performance guarantees on future scenarios after the solution is calculated. We note that RO$_d$($\theta$) only provides guarantees against scenarios in the training set $\mc D(\gamma)$ and hence offers no formal out-of-sample guarantee.
For an optimal solution $x_{GARO(q)}$ of GARO($q$) with corresponding optimal value $\alpha_{GARO(q)}$, the resulting performance guarantee is
\[
x_{GARO(q)}\tpose p^\star \le v^\star_{wc}(\gamma^\star) + \alpha_{GARO(q)}(1+ \gamma^\star)^q
\]
where $\gamma^\star=d(p_0, p^\star)$. Moreover, for an optimal solution $x_{SAT(\beta)}$ of SAT($\beta$) with corresponding optimal value $\alpha_{SAT(\beta)}$, the resulting performance guarantee is
\[
x_{SAT(\beta)}\tpose p^\star \le \beta v^\star_{wc}(0) + \alpha_{SAT(\beta)}\gamma^\star.
\]
Finally, note that for REG we do not naturally obtain a guarantee depending only on $\gamma^\star$.

These performance guarantees for Gaussian data, plotted as a function of $d(p_0, p)$, are shown in Figure \ref{fig:gaussian_guarantees}. For each scenario $p$, the plot shows the guaranteed upper bound on the objective value provided by each method. All values are averages over the different problem and data instances. For each test sample $p\in \mc T$, the value $d(p_0, p)$ is indicated on the horizontal axis with a cross. The horizontal axis is normalized such that $1.0$ corresponds to the largest $\gamma_{0.99}$ over all instances. Since $\gamma_{0.99}$ is computed from the training set, test scenarios can have a normalized distance exceeding $1.0$.
\begin{figure}
    \centering
    \includegraphics[width= 0.9\textwidth]{ 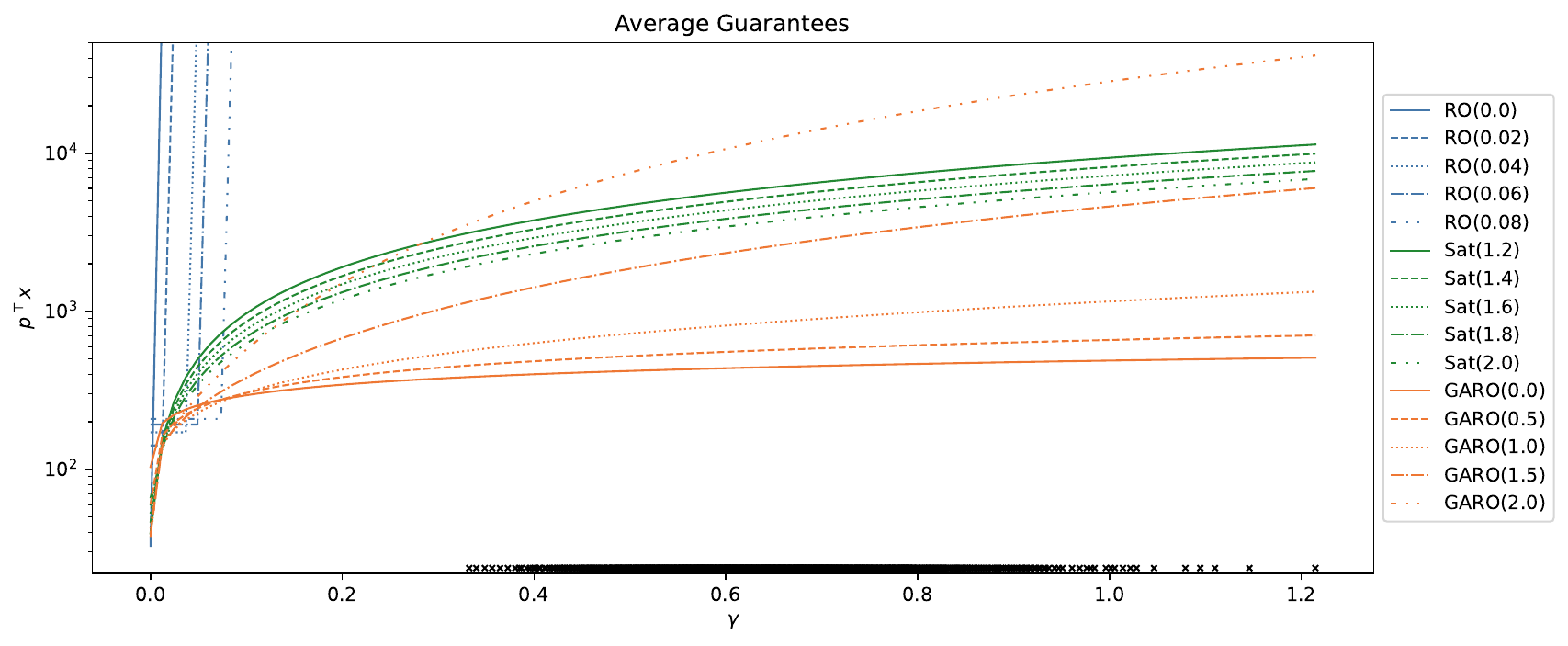}
    \caption{Performance guarantees for the minimum knapsack problem with $n=50$ for Gaussian data.}
    \label{fig:gaussian_guarantees}
\end{figure}

As shown in Figure \ref{fig:gaussian_guarantees}, GARO provides significantly tighter guarantees throughout the test data region for nearly all values of $q$, compared to SAT and RO/RO$_d$. Only for a small region of scenarios very close to the nominal prediction $p_0$ (which did not appear in the test set) are the guarantees of RO/RO$_d$ slightly better than those of GARO. Since the uncertainty sets chosen for RO have a very small radius, the provided guarantees are practically vacuous: under Gaussian sampling the squared Mahalanobis distance of a test point to the training mean is approximately $\chi^2_n$-distributed with mean $n$, so the typical test scenario lies far outside the small uncertainty sets used by RO.

In Figure \ref{fig:gaussian_boxplots} we visualize the distribution of the out-of-sample performance. More precisely, for each method the returned optimal solution $x$ is evaluated on each of the test samples, i.e., we calculate the objective value ${x}\tpose p$ for each $p\in \mc T$ and the distribution of these values is shown in the boxplots. Each value is an average over all $25$ instances.

\begin{figure}
\includegraphics[width=0.9\textwidth]{ 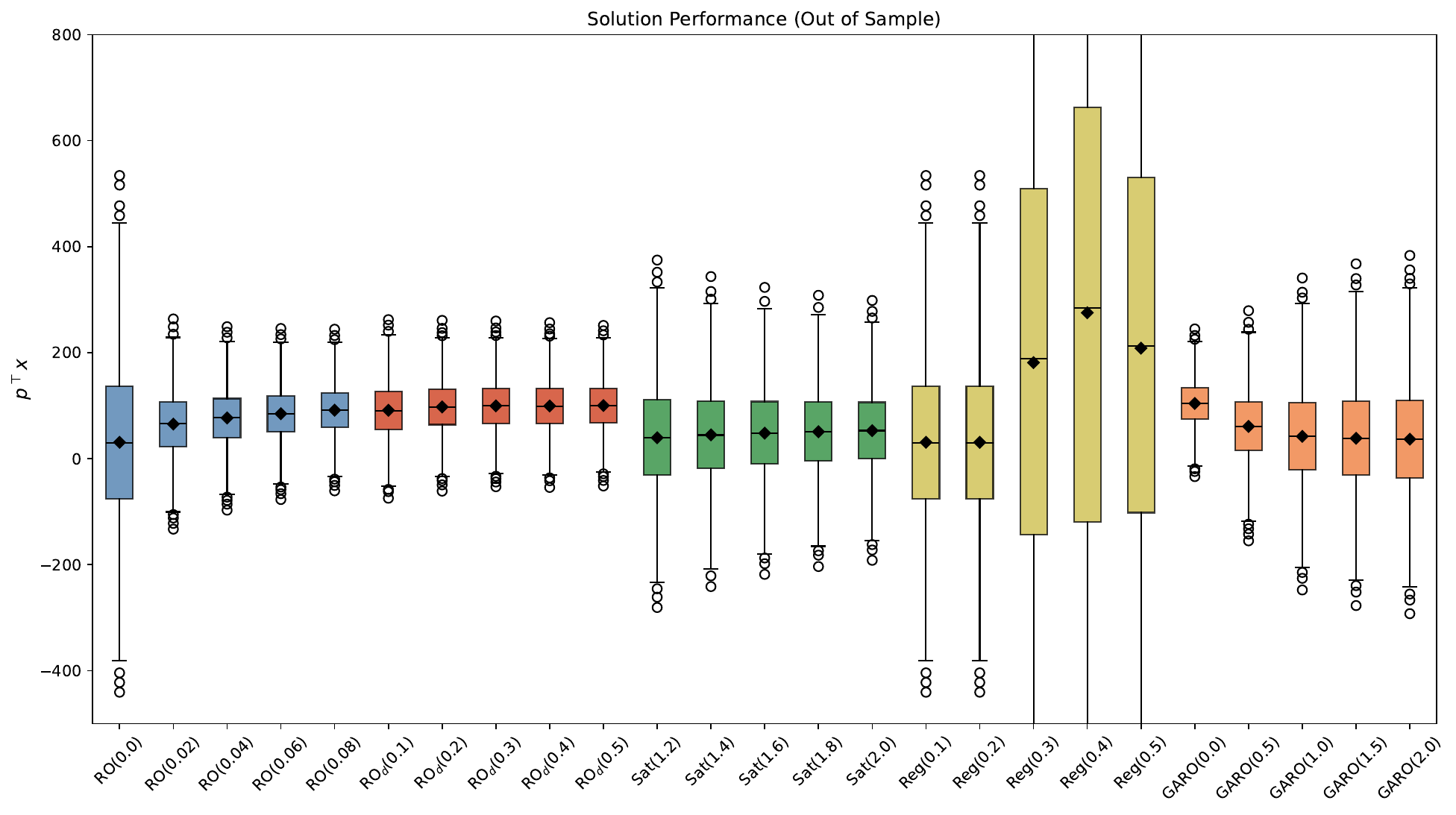}
\caption{Boxplots of the out-of-sample objective values for the minimum knapsack problem with $n=50$ for Gaussian data. The diamonds denote the mean value.}
\label{fig:gaussian_boxplots}
\end{figure}

The results indicate that nearly all methods (except REG and the nominal solution) perform well for the upper quantiles, while for the lower quantiles only REG achieves good values. At the same time, REG exhibits a larger variance compared to the other methods. 

The average runtime is below or around one second for all methods except GARO which has a runtime of around $6$ seconds on average. We note, however, that the REG runtimes reflect the approximate formulation over finitely many training scenarios; solving the exact regret problem would be substantially more expensive due to its inherent nonconvexity.

For Gaussian samples with inverse mean-variance relationship we show the tradeoff plots in Figure \ref{fig:gaussian_inverse_avg_wc}. The results show that the worst-case and $90\%$-quantile costs that GARO achieves are consistently close to the best performance across all methods. At the same time, the mean performance is even better than that of the nominal solution for $q\ge 1$. Meanwhile, SAT also achieves better mean performance than the nominal solution but is worse in terms of worst-case and $90\%$-quantile costs compared to GARO. RO and RO$_d$ perform similarly to GARO in terms of worst-case and $90\%$-quantile costs but underperform in terms of mean performance. The reason GARO performs well here is that items in the knapsack problem with large mean costs have small variance and vice versa. For small uncertainty sets, items with small mean values are therefore beneficial; for larger uncertainty sets, however, these items become less attractive, since their variance eventually exceeds that of items with large means, which are not greatly affected by larger uncertainty sets owing to their small variance.
\begin{figure}
    \centering
    \includegraphics[width=0.9\textwidth]{ 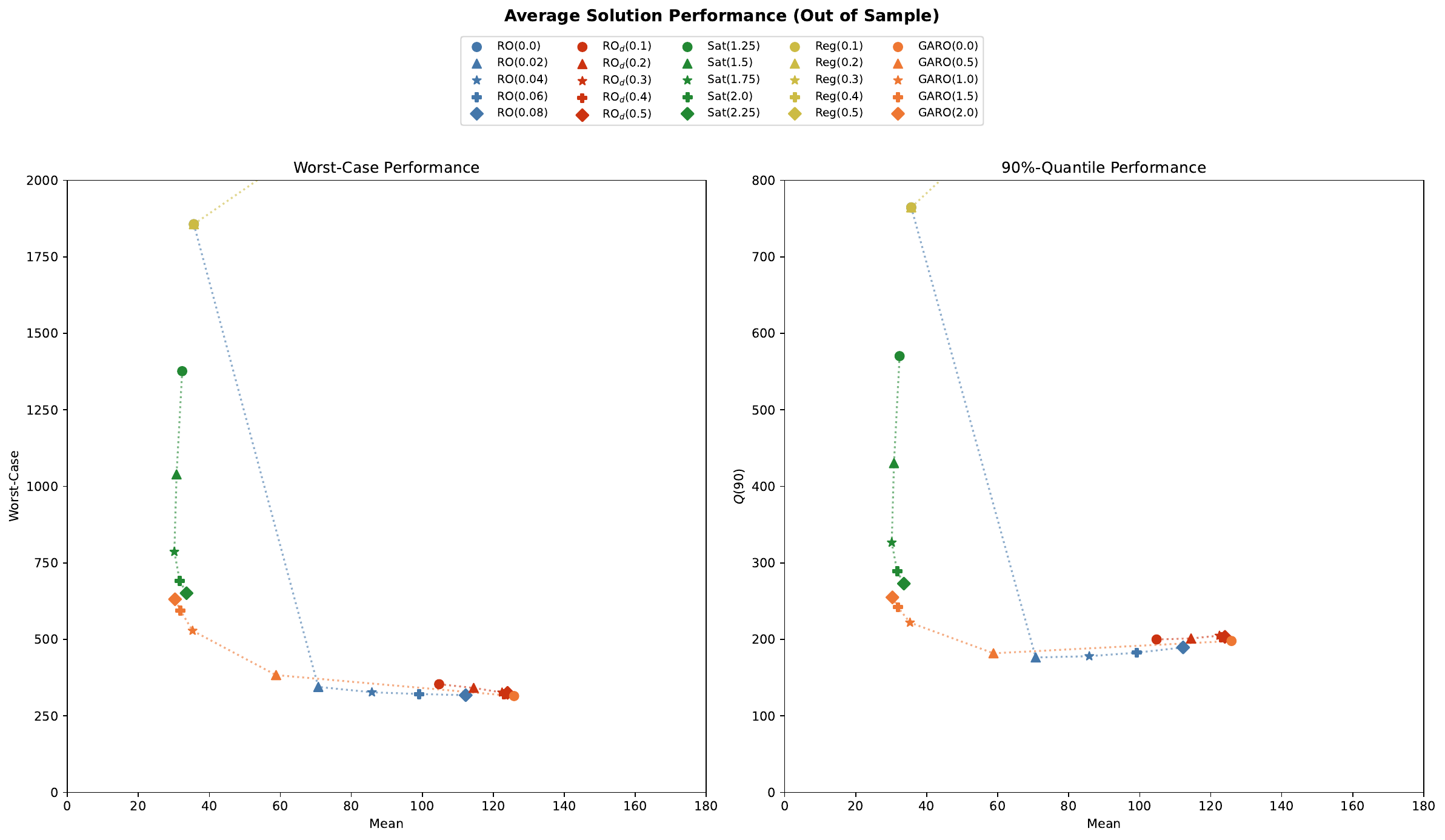} 
    \caption{Out-of-sample performance for the minimum knapsack problem with $n=50$ for Gaussian data with inverse mean-variance relationship. Average vs.\ worst-case (left) and average vs.\ $90\%$-quantile (right).}
    \label{fig:gaussian_inverse_avg_wc}
\end{figure}
We show the performance guarantees in Figure \ref{fig:gaussian_inverse_guarantees}, and the out-of-sample boxplots in Figure \ref{fig:gaussian_inverse_boxplots} in the Appendix which show a similar picture as for classical Gaussian samples.

The runtimes show approximately the same picture as for classical Gaussian samples, but are slightly lower.

\subsection{Heavy-Tail Data}
For the heavy-tail samples we present the trade-off plots in Figure \ref{fig:heavy_tail_avg_wc}. The results indicate that RO, SAT and REG consistently achieve approximately the best worst-case performances, however with significant degradation in mean performance. The GARO method again yields solutions which dominate in terms of worst-case vs.\ mean trade-off. However, its solutions are more sensitive in terms of worst-case behavior, leading to larger worst-case values for $q\ge 1$ compared to most of the benchmark methods, while achieving nearly the best worst-case value for $q=0$. At the same time, GARO with $q\ge 1$ achieves the best mean values together with the nominal solution. REG suffers most in terms of mean performance.

Interestingly, when considering the $90\%$-quantile cost, the trade-off curve changes. The $90\%$-quantile costs are significantly smaller than worst-case costs, owing to the properties of the Pareto distribution, which generates many samples with small values and a few with very large values. GARO for $q\ge 0.5$ performs very well in terms of $90\%$-quantile cost while simultaneously achieving nearly the best mean behavior (only the nominal solution is better). The methods SAT, RO and REG do not achieve good $90\%$-quantile costs, performing even worse than the nominal solution and having significantly larger mean values than GARO. REG achieves the worst $90\%$-quantile costs followed by RO, SAT and RO$_d$.

\begin{figure}
    \centering
    \includegraphics[width=0.9\linewidth]{ 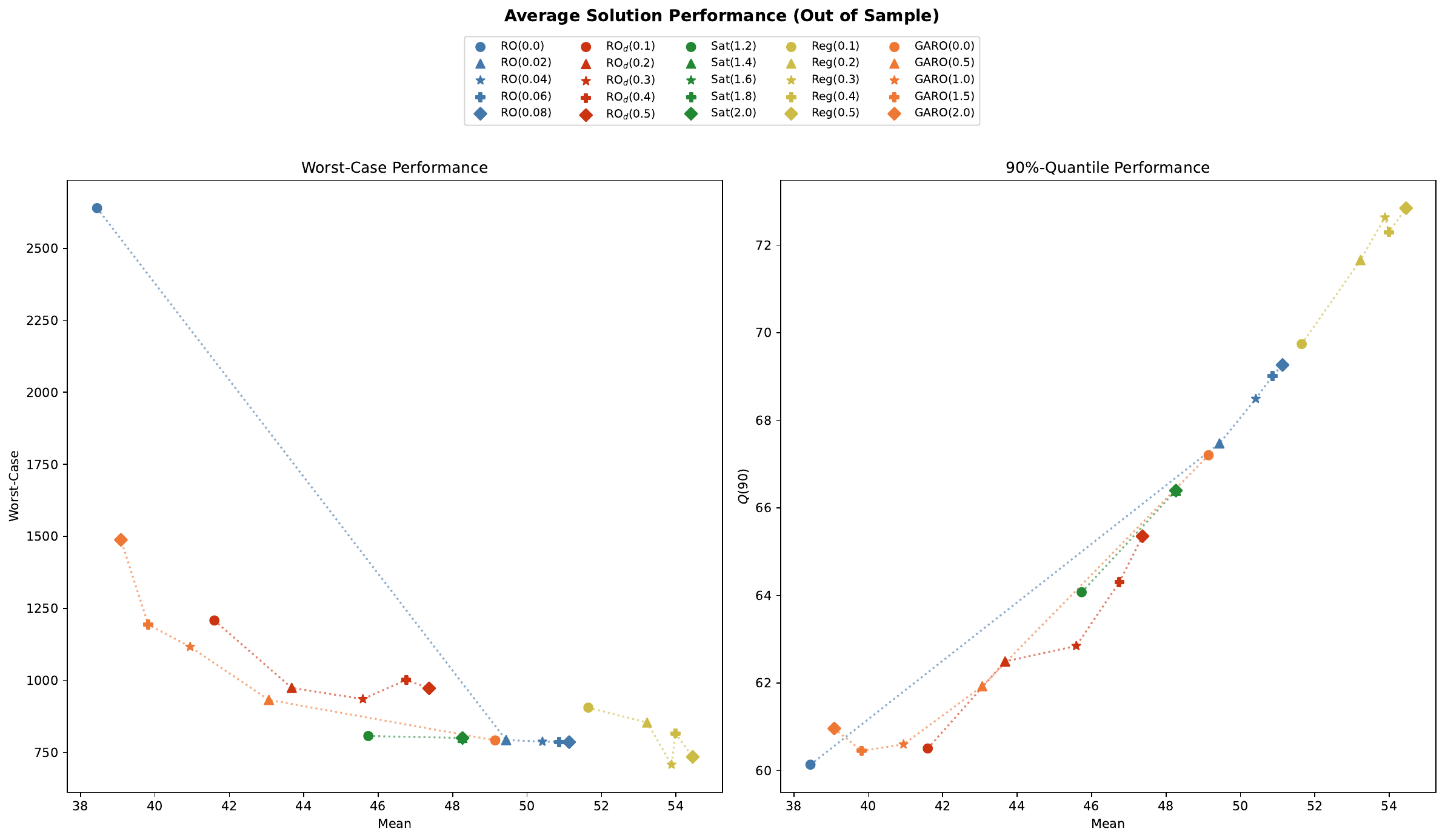}
    \caption{Out-of-sample performance for the minimum knapsack problem with $n=50$ for heavy-tail data. Average vs.\ worst-case (left) and average vs.\ $90\%$-quantile (right).}
    \label{fig:heavy_tail_avg_wc}
\end{figure}

In Figure \ref{fig:heavy_tail_guarantees} we show the guarantees for each method as a function of $\gamma$ for all test scenarios (right) and a zoomed-in view restricted to test scenarios with $\gamma$ close to zero (left). Due to the heavy-tail distribution, most data points lie very close to $p_0$ while a few lie far away. Note that what appears to be piecewise linear functions on the left is an artifact of the plot generation process, since we compute the function value on a grid of $100$ evenly spaced $\gamma$-values connected by straight lines. The results show that GARO($0$) and GARO($0.5$) provide very good guarantees for heavy-tail data, both for scenarios close to $p_0$ and those far from it. The previous trade-off results confirmed that GARO($0.5$) also provided one of the best out-of-sample performances in terms of $90\%$-quantile cost. We show the out-of-sample boxplots in Figure \ref{fig:heavy_tail_boxplots}.

\begin{figure}
    \centering
    \includegraphics[width=0.7\linewidth]{ 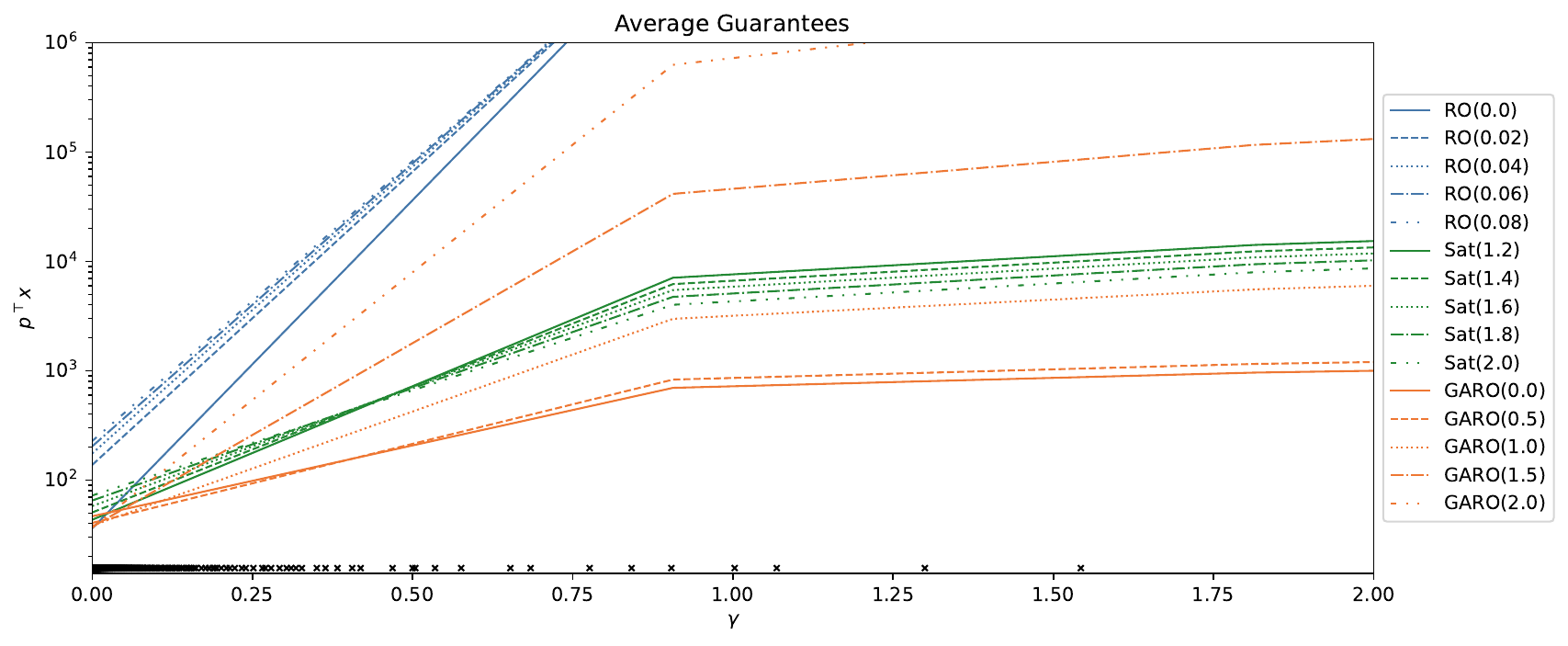}\\[1em]
    \includegraphics[width=0.7\linewidth]{ 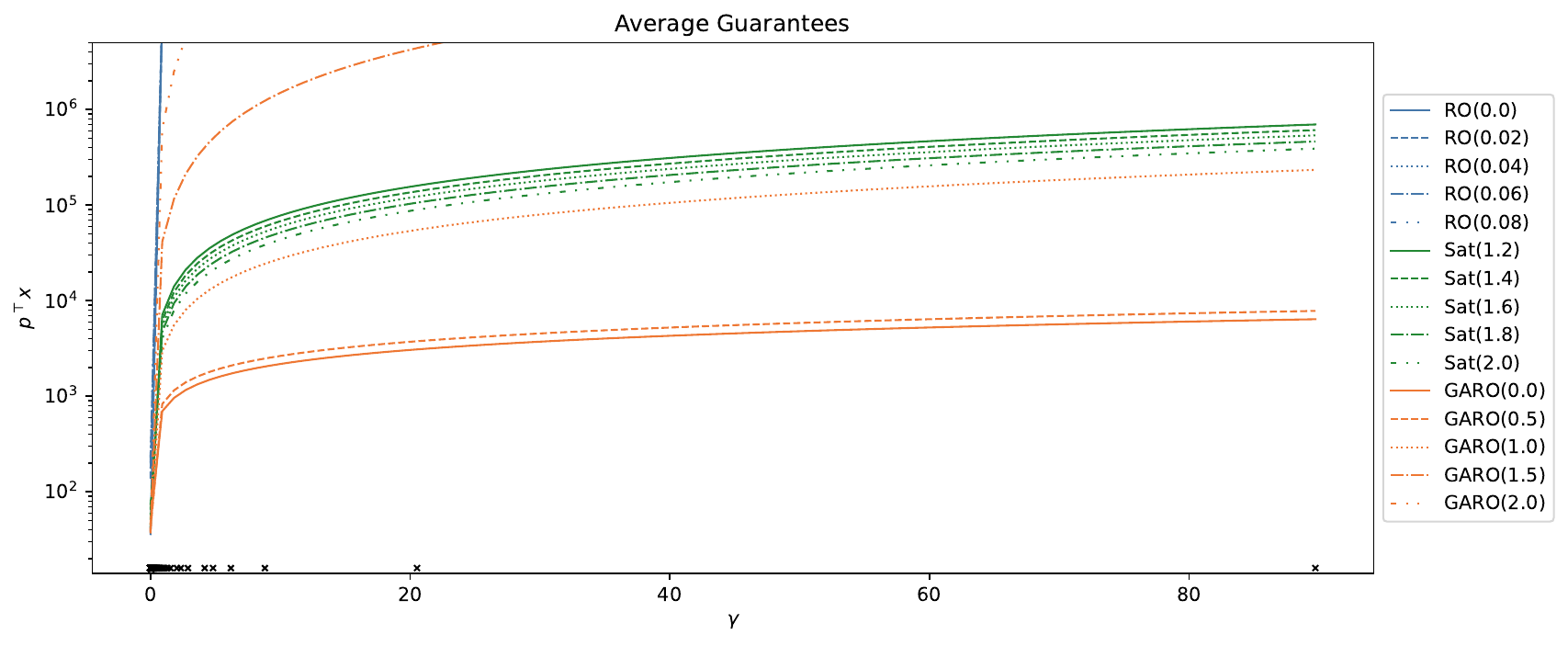}
    \caption{Performance guarantees for the minimum knapsack problem with $n=50$ for heavy-tail data. Zoomed view for small $\gamma$ (top) and full range (bottom).}
    \label{fig:heavy_tail_guarantees}
\end{figure}

\begin{figure}
    \centering
    \includegraphics[width=0.7\linewidth]{ 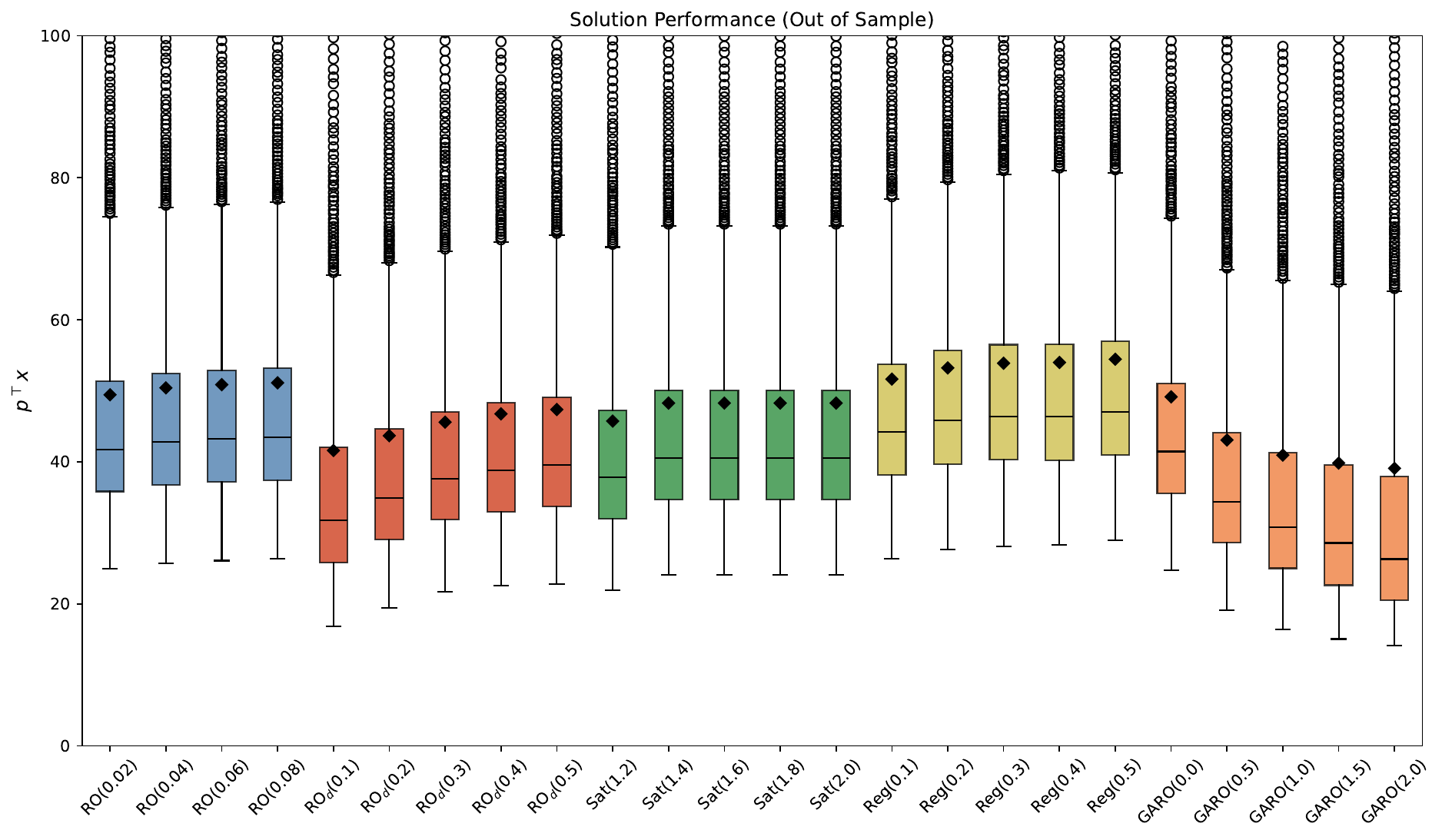}
    \caption{Boxplots for performance over all test scenarios for the knapsack problem with $n=50$ and heavy-tail data.}
    \label{fig:heavy_tail_boxplots}
\end{figure}

The runtimes of RO, RO$_d$, and SAT are again below one second on average, while REG and GARO can have runtimes of up to $10$ seconds where larger uncertainty sets lead to larger runtimes for REG. The runtime for GARO is stable over different $q$.

\section{Conclusion}
We introduced Globalized Adversarial Regret Optimization (GARO), a decision framework that controls adversarial regret uniformly across all prediction error levels.
Unlike classical robust and regret formulations, GARO does not require probabilistic calibration of the uncertainty set and provides meaningful performance guarantees even when predictions lack rigorous error bounds.
We showed that satisficing formulations reduce to robust formulations under typical convexity assumptions, and that GARO with a relative rate function generalizes Lepski's classical adaptation method to downstream decision problems.
On the algorithmic side, we derived exact tractable reformulations for affine and linear-polyhedral settings and proposed a discretization and constraint-generation algorithm with convergence guarantees for the general case.
Experiments on minimum knapsack problems show that GARO yields a favorable trade-off between worst-case and mean out-of-sample performance, and provides stronger global performance guarantees than existing methods.

In this work we focus on single-level problems where the uncertain parameters affect only the objective function.
Extending the globalized adversarial regret perspective to multi-stage problems or to settings where uncertain parameters also affect the constraints is an interesting direction for future research.
Furthermore, not much is known yet about the theoretical complexity of the problem, either in general or for specific problem structures.

\paragraph*{Acknowledgements}
Bart P.G.\ van Parys gratefully acknowledges funding from NWO Vidi grant VI.Vidi.243.021.

\clearpage

\bibliography{main}

\appendix

\section{Supporting Results on the Facility Location Example}

The Wasserstein distance is taken here as
\[
  W(\P, \P_0) \defn \inf \set{\int \norm{\xi-\xi'} \d \mb T(\xi, \xi')}{\mb T\in \mathcal P(\P, \P_0)}
\]
where $\mathcal P(\P, \P_0)$ denotes the set of all probability measures on $\Xi\times \Xi$ with marginals $\P$ and $\P_0$.
The classical Kantorovich-Rubinstein duality theorem states that the Wasserstein distance can be characterized alternatively as
\[
  W(\P, \P_0) = \sup \set{\int \psi(\xi) \d \P(\xi)-\int \psi(\xi) \d \P_0(\xi)}{\psi~{\mathrm{a~1-Lipschitz~function}}}.
\]
The following result states the classical dual characterization of Wasserstein distributional optimization.
\begin{lemma}[{\citet{blanchet2019quantifying}}]
  \label{lemma:weber-dual-representation}
  Let $\int \norm{\xi-\mu}^2 \d\mb P_0(\xi)< \infty$.
  Then, we have
  \[
    \max_{\mb P\in \mc P_\gamma} \int \norm{\xi-\mu}^2 \d \P(\xi) = \inf_{\lambda\geq 0} \lambda \gamma + \int \max_{\xi'\in \Xi} \norm{\xi'-\mu}^2 -\lambda \norm{\xi'-\xi} \d \P_0(\xi).
  \]
\end{lemma}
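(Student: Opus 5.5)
The identity is the standard strong duality for Wasserstein distributionally robust optimization, specialized to the loss $g_\mu(\xi)\defn\norm{\xi-\mu}^2$. I would prove it in two halves: an elementary weak duality bound, and a matching primal construction obtained from the optimality conditions of the one-dimensional dual in $\lambda$. Write
\[
h_\lambda(\xi)\defn\max_{\xi'\in\Xi}\ g_\mu(\xi')-\lambda\norm{\xi'-\xi}
\qquad\text{and}\qquad
D(\lambda)\defn\lambda\gamma+\int h_\lambda\,\d\P_0 .
\]
The map $\lambda\mapsto D(\lambda)$ is convex, because $h_\lambda$ is a pointwise supremum of affine functions of $\lambda$. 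It is also lower semicontinuous, and the integrability assumption $\int g_\mu\,\d\P_0<\infty$ together with $h_\lambda\ge g_\mu$ keeps every integral well defined in $(-\infty,+\infty]$.

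\textbf{Weak duality.} Fix $\P\in\mc P_\gamma$, $\lambda\ge0$ and $\varepsilon>0$. Choose a coupling $\mb T\in\mc P(\P,\P_0)$ with $\int\norm{\xi'-\xi}\,\d\mb T(\xi',\xi)\le\gamma+\varepsilon$. Then
\[
\int g_\mu\,\d\P=\int\big[g_\mu(\xi')-\lambda\norm{\xi'-\xi}\big]\d\mb T+\lambda\int\norm{\xi'-\xi}\,\d\mb T\le\int h_\lambda\,\d\P_0+\lambda(\gamma+\varepsilon).
\]
Letting $\varepsilon\to0$ and taking the infimum over $\lambda$ gives $\sup_{\P\in\mc P_\gamma}\int g_\mu\,\d\P\le\inf_{\lambda\ge0}D(\lambda)$.

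\textbf{Unbounded support.} Here I would treat the case in which $D\equiv+\infty$ separately. This happens when $\Xi$ is unbounded, since the quadratic growth of $g_\mu$ beats any linear penalty $\lambda\norm{\xi'-\xi}$. In that case I exhibit primal values tending to infinity. Move a mass $\varepsilon$ of $\P_0$ to a point at distance roughly $\gamma/\varepsilon$. The transport cost stays at most $\gamma$, while the objective grows like $\varepsilon(\gamma/\varepsilon)^2=\gamma^2/\varepsilon\to\infty$. Both sides are then $+\infty$.

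\textbf{Bounded support.} Otherwise $\Xi$ is bounded (and, after taking its closure, compact), so $h_\lambda$ is finite and continuous and $D$ is finite for all $\lambda\ge0$. Moreover $D(\lambda)\to\infty$ as $\lambda\to\infty$ whenever $\gamma>0$, so a minimizer $\lambda^\star$ exists. The case $\gamma=0$ is trivial: $D$ is nonincreasing with limit $\int g_\mu\,\d\P_0$.

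\textbf{Strong duality (the main step).} I would argue through the one-sided derivatives of $D$ at $\lambda^\star$. Let $\xi'_+(\xi)$ and $\xi'_-(\xi)$ be measurable selections of maximizers in $h_{\lambda^\star}(\xi)$ that minimize, respectively maximize, the distance $\norm{\xi'-\xi}$ over the argmax set. A Danskin-type argument then gives
\[
D'_+(\lambda^\star)=\gamma-\int\norm{\xi'_+(\xi)-\xi}\,\d\P_0,
\qquad
D'_-(\lambda^\star)=\gamma-\int\norm{\xi'_-(\xi)-\xi}\,\d\P_0 .
\]
Optimality requires $D'_-(\lambda^\star)\le0\le D'_+(\lambda^\star)$ when $\lambda^\star>0$, and only $D'_+(0)\ge0$ when $\lambda^\star=0$. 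Hence
\[
\int\norm{\xi'_+(\xi)-\xi}\,\d\P_0\;\le\;\gamma\;\le\;\int\norm{\xi'_-(\xi)-\xi}\,\d\P_0 .
\]
Choose $\theta\in[0,1]$ so that the mixture $\P^\star\defn\theta\,(\xi'_+)_\#\P_0+(1-\theta)\,(\xi'_-)_\#\P_0$ has transport cost exactly $\gamma$. When $\lambda^\star=0$, simply take $\theta=1$, so the cost is at most $\gamma$. Then $\P^\star\in\mc P_\gamma$, and complementary slackness gives
\[
\int g_\mu\,\d\P^\star=\int h_{\lambda^\star}\,\d\P_0+\lambda^\star\gamma=D(\lambda^\star),
\]
which proves equality and also shows that the maximum is attained.

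The main obstacle is the rigorous justification of this step: the existence of measurable maximizer selections and the one-sided derivative formula. I would handle it with the measurable maximum theorem, using that $(\xi,\xi')\mapsto g_\mu(\xi')-\lambda\norm{\xi'-\xi}$ is continuous and $\Xi$ is compact, followed by dominated convergence for the difference quotients of $h_\lambda$, which are bounded by $\operatorname{diam}(\Xi)$. If this became too delicate, I would instead invoke directly the general duality theorem of \citet{blanchet2019quantifying}, whose hypotheses (an upper semicontinuous loss, a lower semicontinuous cost, and $\int g_\mu\,\d\P_0<\infty$) are all satisfied here.
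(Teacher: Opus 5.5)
\textbf{Comparison with the paper.} The paper gives no proof of this lemma. It imports the general duality theorem of \citet{blanchet2019quantifying}, which needs an upper semicontinuous loss integrable under $\mb P_0$, a lower semicontinuous transport cost and a strictly positive radius, but no compactness of $\Xi$.

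Your argument is a genuinely different, self-contained route. You get weak duality by integrating $g_\mu(\xi')-\lambda\norm{\xi'-\xi}\leq h_\lambda(\xi)$ against a near-optimal coupling. For compact $\Xi$ you then analyse the one-dimensional convex function $D$, and use Danskin's one-sided derivatives together with a $\theta$-mixture of two pushforwards of $\mb P_0$ to build an explicit worst-case distribution.

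The delicate points hold:
\begin{itemize}
\item The right derivative involves the distance-minimizing maximizer and the left derivative the distance-maximizing one.
\item The difference quotients in $\lambda$ are dominated by the diameter of $\Xi$.
\item Complementary slackness closes the gap both for $\lambda^\star>0$ (cost exactly $\gamma$) and for $\lambda^\star=0$ (cost at most $\gamma$).
\end{itemize}

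What your route buys over the citation is an explicit certificate of primal attainment, i.e.\ the ``$\max$'' on the left-hand side, which a bare appeal to the duality theorem leaves implicit. What it costs is reliance on compactness. Passing to $\cl(\Xi)$ preserves the value, but attainment needs $\Xi$ itself to be closed, which the $\max$ in the statement presupposes.

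\textbf{The gap: unbounded $\Xi$ with $\gamma=0$.} Your blow-up construction moves mass $\varepsilon$ to distance roughly $\gamma/\varepsilon$. It is cleaner to fix $\xi''\in\Xi$ with $\norm{\xi''}$ large and then choose $\varepsilon$, since an unbounded $\Xi$ need not contain points at a prescribed distance. Either way the construction yields nothing when $\gamma=0$, yet you conclude without restriction that both sides are $+\infty$.

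For $\gamma=0$ and unbounded $\Xi$ that conclusion is false, and no argument can rescue it. Here $\mc P_0=\{\mb P_0\}$, so the left-hand side equals $\int\norm{\xi-\mu}^2\d\mb P_0(\xi)<\infty$. On the other hand $h_\lambda\equiv+\infty$ for every $\lambda\geq0$, which forces the right-hand side to be $+\infty$.

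The lemma as stated therefore needs one of two extra hypotheses:
\begin{itemize}
\item $\gamma>0$. This is exactly the positive-radius assumption in \citet{blanchet2019quantifying}, so your fallback of citing that theorem does not cover this corner either.
\item $\Xi$ bounded. Then your treatment of $\gamma=0$ via $h_\lambda\downarrow g_\mu$ as $\lambda\to\infty$ is correct.
\end{itemize}
With that hypothesis stated, and reading the left-hand side as a supremum equal to $+\infty$ when $\Xi$ is unbounded (no maximum is attained there), your proof is complete.
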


\begin{lemma}
  \label{lemma:weber-lower-bound}
  We have
  \begin{equation}
    \label{eq:lb_wasserstein}
    v^\star_{wc}(\gamma_0) \geq \max\left(0, 1-\frac{\gamma_0}{2\norm{\Xi}}\right) v(\mb P_0)+ \min\left(\frac{\gamma_0}{2}, \norm{\Xi}\right) \norm{\Xi}.
  \end{equation}
\end{lemma}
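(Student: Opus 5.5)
Since $v^\star_{wc}(\gamma_0)=\min_\mu \sup_{\P\in\mc P_{\gamma_0}}\int\norm{\xi-\mu}^2\d\P(\xi)$, it suffices to fix an arbitrary hub location $\mu$ and exhibit one distribution $\P_\mu\in\mc P_{\gamma_0}$ whose cost at $\mu$ is at least the right-hand side of \eqref{eq:lb_wasserstein}. Since $\mu$ is arbitrary, the bound then transfers to the minimum. We avoid the dual representation of Lemma~\ref{lemma:weber-dual-representation} and work directly with a primal mixture perturbation of $\P_0$.

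\textbf{Construction.} Fix $\mu$ and let $\xi_\mu\in\Xi$ (or in its closure, via a limiting argument) be a point maximizing $\norm{\xi-\mu}$ over $\Xi$. By definition of the circumradius, $\norm{\xi_\mu-\mu}\geq \norm{\Xi}$, since otherwise the ball around $\mu$ (projected onto $\conv(\Xi)$ if necessary, where projection does not increase distances to points of $\Xi$) would contain $\Xi$ with a radius smaller than $\norm{\Xi}$. Set $\lambda\defn\min(1,\gamma_0/(2\norm{\Xi}))$ and define $\P_\mu\defn(1-\lambda)\P_0+\lambda\delta_{\xi_\mu}$.

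\textbf{Feasibility.} Couple the $(1-\lambda)$ portion of $\P_0$ to itself and transport the remaining $\lambda$ mass of $\P_0$ to $\xi_\mu$. Any two points of $\Xi$ lie within distance $2\norm{\Xi}$ of each other (both are within $\norm{\Xi}$ of the circumcenter), so
\[
W(\P_\mu,\P_0)\leq \lambda\cdot 2\norm{\Xi}\leq\gamma_0 .
\]
This is the only place where the factor $2$ in $\gamma_0/2$ enters.

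\textbf{Cost bound.} By linearity in the distribution,
\[
\int\norm{\xi-\mu}^2\d\P_\mu=(1-\lambda)\int\norm{\xi-\mu}^2\d\P_0+\lambda\norm{\xi_\mu-\mu}^2 .
\]
The bias–variance decomposition gives $\int\norm{\xi-\mu}^2\d\P_0\geq v(\P_0)$. For the second term we only need $\lambda\norm{\xi_\mu-\mu}^2\geq\lambda\norm{\Xi}^2$. Moreover,
\[
\lambda\norm{\Xi}^2=\min\left(\tfrac{\gamma_0}{2},\norm{\Xi}\right)\norm{\Xi},
\qquad
1-\lambda=\max\left(0,1-\tfrac{\gamma_0}{2\norm{\Xi}}\right),
\]
which is exactly \eqref{eq:lb_wasserstein}. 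Taking the infimum over $\mu$ completes the argument.

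\textbf{Main obstacle.} The delicate step is justifying $\max_{\xi\in\Xi}\norm{\xi-\mu}\geq\norm{\Xi}$ for every $\mu\in\Re^n$, not only for $\mu\in\conv(\Xi)$. The footnote defines the circumradius with the minimum taken over $\conv(\Xi)$. I would handle this with the projection argument: projecting $\mu$ onto the closed convex set $\overline{\conv}(\Xi)$ does not increase the distance to any point of $\Xi$, because projections onto convex sets are nonexpansive and fix points of the set.

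A minor secondary issue is attainment of the maximizer $\xi_\mu$ when $\Xi$ is not closed. In that case I would use an $\varepsilon$-maximizer and let $\varepsilon\to0$. If $\Xi$ is unbounded, the bound is read as infinite, which is consistent with the paper's claim that $v^\star_{wc}(\gamma)=\infty$.
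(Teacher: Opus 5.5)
Your proof is correct and follows essentially the same route as the paper. You fix $\mu$, move a fraction of the mass of $\P_0$ to a point of $\Xi$ farthest from $\mu$, and combine the bias--variance decomposition with $\max_{\xi\in\Xi}\norm{\xi-\mu}\geq\norm{\Xi}$. The differences from the paper are cosmetic:

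1. You use the $\mu$-independent weight $\lambda=\min(1,\gamma_0/(2\norm{\Xi}))$ with the diameter bound $2\norm{\Xi}$, which also absorbs the paper's separate case $\gamma_0>2\norm{\Xi}$. The paper instead takes $\alpha=\gamma_0/(2\max_{\xi\in\Xi}\norm{\xi-\mu})$ and bounds transport distances by the triangle inequality through $\mu$.

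2. You justify the circumradius inequality explicitly via projection, whereas the paper uses it without comment.
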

\begin{proof}[Proof of Lemma \ref{lemma:weber-lower-bound}.]
  Let $\gamma_0>2\norm{\Xi}$. Then as $W(\mb P, \mb P_0)\leq \gamma_0$ for any $\mb P$ it hence holds that
  \[
    \max_{\mb P\in \mc P_{\gamma_0}} \int \norm{\xi-\mu}^2 \d \P(\xi)=\max_{\mb P} \int \norm{\xi-\mu}^2 \d \P(\xi) = \max_{\xi\in \Xi}\norm{\xi-\mu}^2\geq \norm{\Xi}^2.
  \]
  
  Let $\gamma_0\leq 2 \norm{\Xi}$ and take $\Xi^\star = \arg\max_{\xi\in \Xi} \norm{\xi-\mu}$ which is a nonempty compact set by the extreme value theorem.
  Consider the parameter $\alpha=\gamma_0/ (2\max_{\xi\in \Xi} \norm{\xi-\mu})\leq\gamma_0/(2\norm{\Xi})\leq 1$ and consider the following transport plan
  \[
    \mb T_\alpha(\xi', \xi) = \one{\xi=\xi'} (1-\alpha) + \alpha\,\one{\xi'\in \arg\min_{\xi''\in \Xi^\star} \norm{\xi''-\xi}}.
  \]
  Informally, we transport from each location a fraction $\alpha\in [0,1]$ of the probability mass to a closest point in the set $\Xi^\star$.
  Let us denote with $\mb P_\alpha$ the associated pushforward probability measure.
  From the definition of the Wasserstein distance is trivial to observe that $W(\mb P_{\alpha}, \mb P_0) \leq \max_{\xi\in \Xi} 2\norm{\xi-\mu} \alpha\leq \gamma_0$.
  Furthermore, observe that
  \[
    \int \norm{\xi-\mu}^2 \d \P_\alpha(\xi)= (1-\alpha) \int \norm{\xi-\mu}^2 \d \P_0(\xi)+\alpha \max_{\xi\in \Xi} \norm{\xi-\mu}^2.
  \]
  Hence,
  \begin{align*}
    \max_{\mb P\in \mc P_{\gamma_0}} \int \norm{\xi-\mu}^2 \d \P(\xi) \geq & \int \norm{\xi-\mu}^2 \d \mb P_{\alpha}(\xi)\\
    \geq  & \left(1-\frac{\gamma_0}{\max_{\xi\in \Xi} \norm{\xi-\mu}}\right) \int \norm{\xi-\mu}^2 \d \P_0(\xi) + \frac{\gamma_0}{2} \max_{\xi\in \Xi} \norm{\xi-\mu}
  \end{align*}
  from which the claim follows immediately.
\end{proof}

\begin{lemma}
  \label{lemma:bound-M2}
  Let $M(\gamma_0) \defn \set{\mu_{nom}(\mb P)}{\mb P \in \mc P_{\gamma_0}}$ where $\mu_{nom}(\mb P) \defn \arg\min_{\mu}\int\norm{\xi-\mu}^2 \d\mb P(\xi)$ is the unique minimizer.
  Then $M(\gamma_0) \subseteq B[\mu_{nom}(\mb P_0), \gamma_0]$.
\end{lemma}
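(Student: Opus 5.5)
Take any $\mathbb P\in\mathcal P_{\gamma_0}$ and bound $\norm{\mu_{nom}(\mathbb P)-\mu_{nom}(\mathbb P_0)}$ by $\gamma_0$. The minimizer of $\mu\mapsto\int\norm{\xi-\mu}^2\,\d\mathbb P(\xi)$ is the mean, so $\mu_{nom}(\mathbb P)=\int\xi\,\d\mathbb P(\xi)$. This is well defined because $\mathbb P\in\mathcal P$ has a finite second moment, hence a finite first moment. In the same way $\mu_{nom}(\mathbb P_0)=\int\xi\,\d\mathbb P_0(\xi)$. The claim therefore reduces to showing that the distance between the means of two distributions is at most their Wasserstein distance.

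I would argue through the dual norm. Fix $u$ with $\norm{u}_\star\le 1$, where $\norm{\cdot}_\star$ is dual to the norm used in the transport cost. Since $\norm{\cdot}$ is Euclidean in the example, $\norm{\cdot}_\star=\norm{\cdot}$. The linear function $\psi_u(\xi)=u\tpose\xi$ is $1$-Lipschitz because $|u\tpose(\xi-\xi')|\le\norm{u}_\star\norm{\xi-\xi'}\le\norm{\xi-\xi'}$. Applying the Kantorovich--Rubinstein duality stated above gives
\[
u\tpose\Bigl(\int\xi\,\d\mathbb P(\xi)-\int\xi\,\d\mathbb P_0(\xi)\Bigr)=\int\psi_u\,\d\mathbb P-\int\psi_u\,\d\mathbb P_0\le W(\mathbb P,\mathbb P_0)\le\gamma_0 .
\]
Take the supremum over $\norm{u}_\star\le1$. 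This yields $\norm{\mu_{nom}(\mathbb P)-\mu_{nom}(\mathbb P_0)}\le\gamma_0$, so $\mu_{nom}(\mathbb P)\in B[\mu_{nom}(\mathbb P_0),\gamma_0]$.

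The primal route gives the same bound and avoids the duality theorem. Take any coupling $\mathbb T\in\mathcal P(\mathbb P,\mathbb P_0)$. Jensen's inequality for the convex norm gives $\norm{\int(\xi-\xi')\,\d\mathbb T}\le\int\norm{\xi-\xi'}\,\d\mathbb T$, and the left-hand side equals the difference of the means. Taking the infimum over couplings gives $W(\mathbb P,\mathbb P_0)$.

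The only real care point is integrability. The linear test function $\psi_u$ is unbounded, so $\int\psi_u\,\d\mathbb P$ has to be finite for the duality step to apply. Finite first moments guarantee this, and they follow from the finite second moments assumed for $\mathcal P$ and for $\mathbb P_0$.
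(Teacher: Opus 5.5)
Your proposal is correct and follows essentially the paper's argument. Both apply the Kantorovich--Rubinstein duality with the linear $1$-Lipschitz test functions $\xi\mapsto u\tpose\xi$. The paper packages this as the support-function inequality $h_{M(\gamma_0)}(c)\leq c\tpose\int\xi\,\d\P_0(\xi)+\norm{c}\gamma_0=h_B(c)$ and then concludes $M(\gamma_0)\subseteq\cl(\conv(M(\gamma_0)))\subseteq B$. You instead fix $\P$ and take the supremum over directions directly, which avoids the closed-convex-hull step; your primal coupling/Jensen route is a valid, more elementary alternative that needs no duality at all.
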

\begin{proof}[Proof of Lemma \ref{lemma:bound-M2}]
  Recall that the support function of the set $M(\gamma_0)$ is defined as
  \begin{equation}
    h_{M(\gamma_0)}(c) \defn \, \max_{\mu} \set{c\tpose \mu}{\mu \in M(\gamma_0)} = \norm{c} \sup \set{\int \frac{c\tpose \xi}{\norm{c}} \, \d \P(\xi)}{\P\in \mc P_{\gamma_0}}.
  \end{equation}
  As the function $\tfrac{c\tpose \xi}{\norm{c}}$ is $1$-Lipschitz for any $c$, it follows from the Kantorovich-Rubinstein duality that
  \begin{equation}
    h_{M(\gamma_0)}(c) \leq  \, c\tpose \int \xi \d \P_0(\xi) + \norm{c} \gamma_0 = h_{B}(c)
  \end{equation}
  where $B$ denotes here a closed norm ball with center $\mu_{nom}(\mb P_0) = \int \xi \d \P_0(\xi)$ of radius $\gamma_0$. As the inequality holds for any $c$, it follows that $M(\gamma_0) \subseteq \cl(\conv(M(\gamma_0))) \subseteq B$ from which the claim follows.
\end{proof}

\section{Supporting Results on Satisficing}
\label{sec:supporting-sat}

\begin{proposition}
  \label{prop:sat-eq-rob}
  Let $f(x, p)$ be convex and lower semicontinuous in $x$ and strictly concave and upper semicontinuous in $p$ with $X$ compact convex and $P_\infty$ compact convex. Assume that $p\mapsto d(p_0, p)$ is a convex lower semicontinuous distance function and \eqref{eq:rob-grc} is feasible and satisfies condition \eqref{eq:sat-conditions}.
  Then, we have
  \[
    X_{sat} \subseteq \textstyle\cup_{\gamma \in [0, \gamma_{\max}]} X_{rob}(\gamma), \quad \gamma_{\max} \defn \textstyle\max_{p\in P_\infty} d(p_0, p).
  \]
\end{proposition}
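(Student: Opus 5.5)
We will show that every satisficing solution solves the robust problem at some radius by exploiting a minimax (saddle point) structure together with the uniqueness of the inner maximizer afforded by strict concavity. Fix $x_{sat}\in X_{sat}$ with optimal value $\alpha_{sat}$. The satisficing constraint can be written as
\[
  \max_{p\in P_\infty} \; f(x, p) - \alpha\, d(p_0, p) \le f_0 .
\]
Since $p\mapsto f(x,p)-\alpha d(p_0,p)$ is strictly concave and upper semicontinuous on the compact convex set $P_\infty$, and $x\mapsto f(x,p)$ is convex and lower semicontinuous on the compact convex $X$, Sion's minimax theorem applies. It gives
\[
  \min_{x\in X}\max_{p\in P_\infty} \bigl[f(x,p)-\alpha_{sat} d(p_0,p)\bigr] = \max_{p\in P_\infty}\min_{x\in X}\bigl[f(x,p)-\alpha_{sat} d(p_0,p)\bigr].
\]
Optimality of $\alpha_{sat}$ should force this common value to equal exactly $f_0$. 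If it were strictly below $f_0$, I would decrease $\alpha$ slightly and use lower semicontinuity of the min-max value in $\alpha$ together with compactness to contradict minimality. The degenerate case $\alpha_{sat}=0$ is handled by \eqref{eq:sat-conditions}: it then reduces to $f_0 = v_{wc}^\star(\gamma_{\max})$, and $x_{sat}\in X_{rob}(\gamma_{\max})$ directly.

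Let $p^\dagger$ be a maximizer on the dual side. Then $(x_{sat},p^\dagger)$ is a saddle point, because $x_{sat}$ attains the primal value $f_0$. Set $\gamma^\dagger := d(p_0,p^\dagger)\le\gamma_{\max}$. I then aim to show $x_{sat}\in X_{rob}(\gamma^\dagger)$ in two steps.
\begin{itemize}
\item Upper bound. For every $p\in P_{\gamma^\dagger}$ the saddle inequality gives
\[
  f(x_{sat},p)\le f(x_{sat},p^\dagger)+\alpha_{sat}\bigl(d(p_0,p)-\gamma^\dagger\bigr)\le f(x_{sat},p^\dagger).
\]
Hence $v_{wc}(x_{sat},\gamma^\dagger)=f(x_{sat},p^\dagger)$.
\item Lower bound. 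For any $x\in X$, the other saddle inequality gives $f(x,p^\dagger)\ge f(x_{sat},p^\dagger)$. Therefore $v_{wc}(x,\gamma^\dagger)\ge f(x,p^\dagger)\ge f(x_{sat},p^\dagger)=v_{wc}(x_{sat},\gamma^\dagger)$, which proves robust optimality at radius $\gamma^\dagger$.
\end{itemize}

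The main obstacle is that an arbitrary optimal $x_{sat}$ must pair with a single common $p^\dagger$. In general, the set of primal optimizers only forms saddle points with the set of dual optimizers, and the latter need not be a singleton. This is where strict concavity is used. For fixed $x_{sat}$, the map $p\mapsto f(x_{sat},p)-\alpha_{sat}d(p_0,p)$ is strictly concave, so it has a unique maximizer $\hat p$. I then need to show $\hat p$ is a dual optimizer, i.e. $\min_x[f(x,\hat p)-\alpha_{sat} d(p_0,\hat p)]=f_0$.

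My first attempt is the standard fact that in a convex--concave game the saddle point set is the product of the primal and dual optimal sets. Thus for any dual optimizer $p^\dagger$, the pair $(x_{sat},p^\dagger)$ is a saddle point. Consequently $p^\dagger$ maximizes $f(x_{sat},\cdot)-\alpha_{sat}d$, and by uniqueness $p^\dagger=\hat p$. It therefore suffices that a dual optimizer exists, which follows from upper semicontinuity and compactness of $P_\infty$. I must still check that the satisficing constraint is tight at $x_{sat}$, i.e. that the primal value of $x_{sat}$ is exactly $f_0$. This again follows from minimality of $\alpha_{sat}$ combined with the lower bound $v_{wc}^\star(0)\le f_0$ from \eqref{eq:sat-conditions}.
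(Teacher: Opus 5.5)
Your argument is correct, but it dualizes over a different object than the paper does. The paper rewrites \eqref{eq:rob-grc} as a semi-infinite program in $(x,\alpha)$ indexed by the radius $\gamma$. It then uses infinite-dimensional Lagrange duality with a measure multiplier $\mu_{sat}$ on $\Re_+$. Complementary slackness shows $\mu_{sat}$ is a probability measure supported on the binding set, where $\gamma\mapsto v_{wc}(x_{sat},\gamma)$ is affine. Finally, strict concavity of that map (Lemma~\ref{lemma:strict-concavity}, which is where strict concavity of $f$ in $p$ enters) collapses $\mu_{sat}$ to a Dirac mass at some $\gamma_0'$.

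You instead fix $\alpha_{sat}$ and apply Sion's theorem to $L(x,p)=f(x,p)-\alpha_{sat}d(p_0,p)$ on $X\times P_\infty$. The dual variable is then a single adversarial parameter $p^\dagger$, and the radius is read off as $\gamma^\dagger=d(p_0,p^\dagger)$. Concavity of $f$ and convexity of $d$ have already merged any ``mixture of radii'' into one point. Your two saddle inequalities then give $v_{wc}(x_{sat},\gamma^\dagger)=f(x_{sat},p^\dagger)\le f(x,p^\dagger)\le v_{wc}(x,\gamma^\dagger)$ at once.

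Note that strict concavity is never actually used. Once $\max_{p}L(x_{sat},p)=f_0=\max_p\min_x L(x,p)$, the product structure of the saddle-point set pairs $x_{sat}$ with every dual optimizer. Your ``main obstacle'' paragraph and the uniqueness of $\hat p$ are therefore superfluous. Your route thus proves the inclusion for \emph{every} satisficing solution under mere concavity in $p$ plus semicontinuity. This subsumes the regularization argument of Theorem~\ref{prop:general-convex}: no auxiliary $\psi$, no joint continuity, and all of $X_{sat}$ rather than a single element. It is consistent with Example~\ref{ex:sat-not-rob}, where $P_\infty=\Re$ is not compact. What the paper's route offers in exchange is an argument phrased entirely through the worst-case cost curves $v_{wc}(x,\cdot)$, at the price of heavier duality machinery and the extra hypothesis.

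Two small repairs are needed.
\begin{enumerate}
\item In the tightness step, lower semicontinuity in $\alpha$ points the wrong way. What you need is this: if some $x'$ has $\max_{p\in P_\infty}[f(x',p)-\alpha_{sat}d(p_0,p)]<f_0$, then for $\alpha\le\alpha_{sat}$ one has $\max_{p\in P_\infty}[f(x',p)-\alpha d(p_0,p)]\le\max_{p\in P_\infty}[f(x',p)-\alpha_{sat}d(p_0,p)]+(\alpha_{sat}-\alpha)\gamma_{\max}$, since $0\le d(p_0,\cdot)\le\gamma_{\max}<\infty$ on $P_\infty$. This contradicts minimality when $\alpha_{sat}>0$.
\item Tightness at $x_{sat}$ itself then follows from $f_0=\min_x\max_p L\le\max_p L(x_{sat},p)\le f_0$, not from the bound $v^\star_{wc}(0)\le f_0$.
\end{enumerate}
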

\begin{proof}
  First, it is straightforward to observe that problem \eqref{eq:rob-grc} is here equivalent to
  \[
    \begin{array}{rl}
      \min_{x\in X, \, \alpha\geq 0} & \alpha \\
      \st & \tfrac{(v_{wc}(x, \gamma) - f_0)}{\gamma} \leq \alpha \quad \forall \gamma\geq 0.
    \end{array}
  \]
  As \eqref{eq:rob-grc} admits a feasible point $(x_s, \alpha_s)$, the previous optimization problem satisfies the Slater constraint qualification condition at $(x_s, \alpha_s+1)$. Furthermore, its minimum is attained as $v_{wc}(x, \gamma)$ is lower semicontinuous in $x$ for all $\gamma\geq 0$ and $X$ is a compact set and $\alpha$ can be constrained without loss of generality to the compact set $[0, \alpha_s]$. Consider now an arbitrary minimizer $(x_{sat}, \alpha_{sat})$.

  \medskip\noindent\textbf{Case $\alpha_{sat}=0$.} We must have here that $f_0\geq \min_{x\in X} \max_{p\in P_\infty} f(x, p)$ for \eqref{eq:rob-grc} to be feasible and hence from \eqref{eq:sat-conditions} we get $f_0= \min_{x\in X} \max_{p\in P_\infty} f(x, p)$ and hence $x_{sat}\in X_{rob}(\gamma_{\max})$.

  \medskip\noindent\textbf{Case $\alpha_{sat}>0$.} The constraint $\alpha\geq 0$ is nonbinding and can be relaxed and hence \eqref{eq:rob-grc} is here equivalent to
  \[
    \begin{array}{rl}
      \min_{x\in X, \, \alpha\in\Re} & \alpha \\
      \st & \tfrac{(v_{wc}(x, \gamma) - f_0)}{\gamma} \leq \alpha \quad \forall \gamma\geq 0.
    \end{array}
  \]
  We remark that as $f(x, p)$ is convex in $x$ this optimization problem is convex by construction.
  Introduce the Lagrangian
  \[
    L(x, \alpha, \mu) = \alpha + \int_{\Re_+} \frac{(v_{wc}(x, \gamma) - f_0)}{\gamma}- \alpha \, \d \mu(\gamma)
  \]
  where $\mu\in \mc M_{+}(\Re_+)$ is a positive measure supported on $\Re_+$.
  By \citet[Theorem 8.3.1]{luenberger_optimization_nodate} Slater's constraint qualification implies that strong duality holds and the dual maximum is attained at some nonnegative measure $\mu_{sat}$.
  Combined with primal attainment of $(x_{sat}, \alpha_{sat})$ \citet[Theorem 8.4.1]{luenberger_optimization_nodate} gives that the triple $(x_{sat}, \alpha_{sat}, \mu_{sat})$ is a saddle point of the Lagrangian $L$, i.e.,
  \[
    \max_{\mu\in \mc M_{+}(\Re_+)} L(x_{sat}, \alpha_{sat}, \mu) \leq L(x_{sat}, \alpha_{sat}, \mu_{sat}) = \alpha_{sat} \leq \min_{x\in X, \, \alpha\in\Re} L(x, \alpha, \mu_{sat}).
  \]

  We now indicate that $\mu_{sat}$ must be a degenerate probability distribution.
  From the saddle point condition, we first observe that $\alpha_{sat}$ minimizes $L(x_{sat}, \alpha,  \mu_{sat})$ over $\alpha\in\Re$.
  Given that
  \[
    L(x_{sat}, \alpha, \mu_{sat}) = \alpha \left(1- \int_{\Re_+}  \d \mu_{sat}(\gamma)\right)+ \int_{\Re_+} \frac{(v_{wc}(x_{sat}, \gamma) - f_0)}{\gamma} \d  \mu_{sat}(\gamma)
  \]
  we need that $\int_{\Re_+} \d  \mu_{sat}(\gamma)=1$ and hence it follows that $\mu_{sat}$ is a probability measure on $\Re_+$.
  Second, we have that
  \(
  \alpha_{sat}  = L(x_{sat}, \alpha_{sat}, \mu_{sat})
  \)
  which implies the complementarity condition
  \[
    \int_{\Re_+} \frac{(v_{wc}(x_{sat}, \gamma) - f_0)}{\gamma} -\alpha_{sat}\, \d \mu_{sat}(\gamma) = 0.
  \]
  Observe that as $f(x, p)$ is concave in $p$ and $d(p_0, p)$ is convex in $p$ it follows that $v_{wc}(x, \gamma)$ is an increasing concave function for any $x \in X$.
  From primal feasibility it follows that $(v_{wc}(x_{sat}, \gamma) - f_0)/{\gamma} - \alpha_{sat}\leq 0$ and hence $\mu_{sat}$ must be supported on a convex set $C = \set{\gamma\geq 0}{v_{wc}(x_{sat}, \gamma) = f_0 + \alpha_{sat}\gamma }= \set{\gamma\geq 0}{v_{wc}(x_{sat}, \gamma) \geq f_0 + \alpha_{sat}\gamma }$ where the worst-case function is affine and strictly increasing as we have here $\alpha_{sat}>0$. Hence, we must have that $C\subseteq [0, \max_{p\in P_\infty} d(p_0, p)]$ as for $\gamma\geq \max_{p\in P_\infty} d(p_0, p)$ the worst-case function takes on a constant value $v_{wc}(x_{sat}, \gamma) = \max_{p\in P_\infty} f(x_{sat}, p)$.

  However, the worst-case cost function $\gamma\mapsto v_{wc}(x_{sat}, \gamma)$ is strictly concave following Lemma \ref{lemma:strict-concavity} and hence $C=\{\gamma_0'\}$ must be a singleton. For the sake of contradiction, let the convex set $C$ contain two distinct points $\gamma_a$ and $\gamma_b$. Then for any $t\in (0,1)$ we have
  \(
    v_{wc}(x_{sat}, t\gamma_a + (1-t)\gamma_b) = f_0 + \alpha_{sat}(t\gamma_a + (1-t)\gamma_b) = t (f_0 + \alpha_{sat}\gamma_a)+(1-t)(f_0 +\alpha_{sat}\gamma_b) = tv_{wc}(x_{sat}, \gamma_a) +(1-t)v_{wc}(x_{sat}, \gamma_b)
  \)
  which contradicts the fact that $v_{wc}(x_{sat}, \gamma)$ is strictly concave.

  From the saddle point condition, $x_{sat}$ finally minimizes $L(x, \alpha_{sat}, \mu_{sat}) = \int_{\Re_+} \tfrac{(v_{wc}(x, \gamma) - f_0)}{\gamma} \d \mu_{sat}(\gamma) = \tfrac{(v_{wc}(x, \gamma_0') - f_0)}{\gamma_0'}$ over $x\in X$ and hence $x_{sat}\in X_{rob}(\gamma_0')$.
\end{proof}

\begin{lemma}
  \label{lemma:strict-concavity}
Let $P$ be a compact convex set and $p_0 \in P$, let $f(x, p)$ be strictly concave and upper semicontinuous in $p$, and let $p\mapsto d(p_0, p)$ be a convex lower semicontinuous function. The function
\[
    \gamma\mapsto v_{wc}(x, \gamma) = \max \set{f(x, p)}{p \in P,~ d(p_0, p) \leq \gamma}
\]
is strictly concave on any set $C$ where it is strictly increasing.
\end{lemma}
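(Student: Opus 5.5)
The plan is the standard ``convex combination of maximizers'' argument for concavity of a value function. Strictness will come from the strict concavity of $f(x,\cdot)$, once we know the two maximizers are distinct, and that is exactly what strict monotonicity on $C$ provides. Fix $x$ and write $v(\gamma)\defn v_{wc}(x,\gamma)$. We read the claim as follows: for any $\gamma_a<\gamma_b$ in $C$ and $t\in(0,1)$, with $\gamma_t\defn t\gamma_a+(1-t)\gamma_b$,
\[
v(\gamma_t)>t\,v(\gamma_a)+(1-t)\,v(\gamma_b).
\]

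\textbf{Step 1 (attainment).} For every $\gamma\geq 0$ the feasible set $S_\gamma\defn\set{p\in P}{d(p_0,p)\leq\gamma}$ is nonempty, since $p_0\in S_\gamma$ because $d(p_0,p_0)=0$. It is closed because $p\mapsto d(p_0,p)$ is lower semicontinuous, and it is convex because this map is convex. Hence $S_\gamma$ is a compact convex subset of $P$. Since $f(x,\cdot)$ is upper semicontinuous, the maximum defining $v(\gamma)$ is attained. Pick maximizers $p_a\in S_{\gamma_a}$ and $p_b\in S_{\gamma_b}$.

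\textbf{Step 2 (feasibility of the combination).} Set $p_t\defn tp_a+(1-t)p_b\in P$. Convexity of $d(p_0,\cdot)$ gives
\[
d(p_0,p_t)\leq t\,d(p_0,p_a)+(1-t)\,d(p_0,p_b)\leq\gamma_t,
\]
so $p_t\in S_{\gamma_t}$ and therefore $v(\gamma_t)\geq f(x,p_t)$. This step alone already yields concavity of $v$.

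\textbf{Step 3 (strictness; the key step).} We must rule out $p_a=p_b$. If $p_a=p_b$, then $v(\gamma_b)=f(x,p_b)=f(x,p_a)=v(\gamma_a)$. This contradicts $v$ being strictly increasing on $C$, because $\gamma_a<\gamma_b$ both lie in $C$. Hence $p_a\neq p_b$, and strict concavity of $f(x,\cdot)$ gives
\[
v(\gamma_t)\geq f(x,p_t)>t f(x,p_a)+(1-t)f(x,p_b)=t\,v(\gamma_a)+(1-t)\,v(\gamma_b).
\]

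The only delicate points are the attainment in Step 1, which relies on the semicontinuity hypotheses and compactness of $P$, and the distinctness argument in Step 3. The latter is where the ``strictly increasing'' hypothesis is genuinely needed. Without it, the example of a maximizer lying in the interior of $S_{\gamma_a}$ shows that $v$ can be constant, hence not strictly concave, on that range.
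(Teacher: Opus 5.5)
Your proof is correct and follows the paper's argument: convex combinations of maximizers give concavity, and strict monotonicity on $C$ rules out coinciding maximizers, so strict concavity of $f(x,\cdot)$ yields strictness. Your Step 3 is slightly more direct than the paper's. You contradict strict increase immediately through $v(\gamma_a)=v(\gamma_b)$, whereas the paper first shows $v$ is constant on $[\gamma_a,\gamma_b]$. You also never need uniqueness of the maximizers.
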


\begin{proof}
The feasible set $F(\gamma) = \{p \in P : d(p_0, p) \leq \gamma\}$ is a nonempty, compact, and convex set for any $\gamma \in C$. Since $f(x, p)$ is upper semicontinuous in $p$, the maximum defining $v_{wc}(x, \gamma)$ is attained by the Weierstrass extreme value theorem, and since $p\mapsto f(x, p)$ is strictly concave this maximizer $p^\star(\gamma)$ is also unique.

\medskip\noindent\textbf{Concavity.} Consider $\gamma_1$ and $\gamma_2$ and $\lambda \in (0,1)$, and set $\gamma_\lambda = \lambda \gamma_1 + (1-\lambda)\gamma_2$. Let $p_i^\star = p^\star(\gamma_i)$ and $p_\lambda = \lambda p_1^\star + (1-\lambda) p_2^\star$. Since $P$ is convex, $p_\lambda \in P$. Since $p\mapsto d(p_0, p)$ is convex,
\[
    d(p_0, p_\lambda) \leq \lambda\, d(p_0, p_1^\star) + (1-\lambda)\, d(p_0, p_2^\star) \leq \lambda \gamma_1 + (1-\lambda)\gamma_2 = \gamma_\lambda,
\]
so $p_\lambda \in F(\gamma_\lambda)$. By feasibility and concavity of $f$,
\[
    v_{wc}(x, \gamma_\lambda) \geq f(x, p_\lambda) \geq \lambda f(x, p_1^\star) + (1-\lambda) f(x, p_2^\star) = \lambda v_{wc}(x, \gamma_1) + (1-\lambda) v_{wc}(x, \gamma_2),
\]
establishing concavity of $v_{wc}$.

\medskip\noindent\textbf{Strict concavity on $C$.} It suffices to show that for $\gamma_1 < \gamma_2$ in $C$, the optimizers satisfy $p_1^\star \neq p_2^\star$, since then the strict concavity of $f$ gives $f(x, p_\lambda) > \lambda f(x, p_1^\star) + (1-\lambda)f(x, p_2^\star)$, and the argument above yields strict inequality $v_{wc}(x, \gamma_\lambda) > \lambda v_{wc}(x, \gamma_1) + (1-\lambda)v_{wc}(x, \gamma_2)$. Suppose for contradiction that $p_1^\star = p_2^\star = p^\star$. Then $d(p_0, p^\star) \leq \gamma_1$, so $p^\star$ is feasible for every $\gamma' \in [\gamma_1, \gamma_2]$, giving $v_{wc}(x, \gamma') \geq f(x, p^\star) = v_{wc}(x, \gamma_1)$. But $p^\star$ is also optimal at $\gamma_2$, so for any $\gamma' \leq \gamma_2$ we have $v_{wc}(x, \gamma') \leq v_{wc}(x, \gamma_2) = f(x, p^\star)= v_{wc}(x, \gamma_1)$. Hence $\gamma\mapsto v_{wc}(x, \gamma)$ is constant on $[\gamma_1, \gamma_2]$, contradicting the assumption that $\gamma\mapsto v_{wc}(x, \gamma)$ is strictly increasing on $C$.
\end{proof}

We remark that the assumption that the cost function $f(x, p)$ is strictly concave in $p$ can be relaxed, resulting in a slightly weaker result that there is always a satisficing solution which admits a robust interpretation. The following is a proof of Theorem~\ref{prop:general-convex}, which proceeds via a regularization argument.

\begin{proof}[Proof of Theorem~\ref{prop:general-convex}]
For each $\varepsilon > 0$ define
\[
    f^\varepsilon(x, p) := f(x, p) - \varepsilon\, \psi(p).
\]
Since $f(x,\cdot)$ is concave and $-\varepsilon\psi$ is strictly concave, $f^\varepsilon(x,\cdot)$
is strictly concave for every $\varepsilon > 0$. Define
\[
    v_{wc}^\varepsilon(x, \gamma) := \max_{p \in P_\gamma} f^\varepsilon(x,p), \qquad
    v_{wc}(x, \gamma) := \max_{p \in P_\gamma} f(x,p).
\]

\medskip\noindent\textbf{Step 1 (Uniform approximation and compactness).}
Let $M := \max_{p \in P_\infty} \psi(p) < \infty$. Let $p^\varepsilon \in \arg\max_{p \in P_\gamma}
f^\varepsilon(x,p)$ and $p^\star \in \arg\max_{p \in P_\gamma} f(x,p)$. Using optimality of
$p^\varepsilon$ for $f^\varepsilon$ and $p^\star$ for $f$ respectively:
\begin{align*}
v_{wc}^\varepsilon(x,\gamma)
  &= f(x,p^\varepsilon) - \varepsilon\psi(p^\varepsilon)
   \leq f(x,p^\star) - \varepsilon\psi(p^\varepsilon)
   \leq v_{wc}(x,\gamma), \\
v_{wc}^\varepsilon(x,\gamma)
  &= f(x,p^\varepsilon) - \varepsilon\psi(p^\varepsilon)
   \geq f(x,p^\star) - \varepsilon\psi(p^\star)
   \geq v_{wc}(x,\gamma) - \varepsilon M,
\end{align*}
where the first inequality in the first line uses optimality of $p^\star$, the second inequality in the first line uses $\psi \geq 0$ and the first inequality in the
second line uses optimality of $p^\varepsilon$. Hence
\[
    \max_{x \in X,\, \gamma \in [0,\gamma_{\max}]}
    \bigl|v_{wc}^\varepsilon(x,\gamma) - v_{wc}(x,\gamma)\bigr|
    \;\leq\; \varepsilon M \;\to\; 0
    \quad \text{as } \varepsilon \to 0.
\]
Since $v_{wc}^\varepsilon \leq v_{wc}$, any $(x,\alpha)$ feasible for $\mathrm{SAT}$ is also
feasible for $\mathrm{SAT}^\varepsilon$, so $\alpha^\varepsilon \leq \alpha_{sat}$ for all
$\varepsilon > 0$. By compactness of $X \times [0,\alpha_{sat}] \times [0,\gamma_{\max}]$, there
exists a sequence $\varepsilon_k \downarrow 0$ along which
\[
    (x^{\varepsilon_k},\, \alpha^{\varepsilon_k},\, \gamma^{\varepsilon_k})
    \;\longrightarrow\;
    (\hat{x},\, \hat{\alpha},\, \hat{\gamma})
    \;\in\; X \times [0,\alpha_{sat}] \times [0,\gamma_{\max}].
\]

\medskip\noindent\textbf{Step 2 (Continuity of $v_{wc}$).}
We verify the conditions of Berge's maximum theorem \citep[p.\ 116]{berge1963topological} applied to
$v_{wc}(x,\gamma) = \max_{p\in P_\gamma} f(x,p)$ with parameter $(x,\gamma)\in X\times[0,\gamma_{\max}]$.
First, each $P_\gamma = \{p\in P_\infty : d(p_0,p)\leq\gamma\}$ is compact, as a closed sublevel set
of the lower semicontinuous function $d(p_0,\cdot)$ intersected with the compact set $P_\infty$, and nonempty
since $p_0\in P_\infty$ satisfies $d(p_0,p_0)=0\leq\gamma$.
The correspondence $\gamma\mapsto P_\gamma$ is upper hemicontinuous: if $\gamma_n\to\gamma$ and
$p_n\in P_{\gamma_n}$ with $p_n\to p$ for $n\to\infty$, then lower semicontinuity of $d$ gives $d(p_0,p)\leq \lim_{n\to\infty} d(p_0,p_n)
\leq\lim_{n\to\infty}\gamma_n=\gamma$, so $p\in P_\gamma$.
It is lower hemicontinuous: given $p\in P_\gamma$ and $\gamma_n\to\gamma$ for $n\to\infty$, set
$t_n = \min(1,\gamma_n/\gamma)$ (interpreting $t_n=1$ when $\gamma=0$) and
$p_n = t_n p + (1-t_n)p_0\in P_\infty$ by convexity of $P_\infty$.
Convexity of $d(p_0,\cdot)$ gives $d(p_0,p_n)\leq t_n\,d(p_0,p)\leq t_n\gamma\leq\gamma_n$,
so $p_n\in P_{\gamma_n}$, and $p_n\to p$ since $t_n\to 1$.
Since $f$ is jointly continuous, Berge's maximum theorem implies that $v_{wc}(x,\gamma)$ is
jointly continuous in $(x,\gamma)$ on $X\times[0,\gamma_{\max}]$.

\medskip\noindent\textbf{Step 3 (Perturbed satisficing problem).}
Since $f^\varepsilon(x,\cdot)$ is strictly concave and upper semicontinuous, Proposition~\ref{prop:sat-eq-rob} applies to $\mathrm{SAT}^\varepsilon$ and yields an optimal solution
$(x^\varepsilon, \alpha^\varepsilon)$ with $x^\varepsilon \in X_{rob}^\varepsilon(\gamma^\varepsilon)$
for some $\gamma^\varepsilon \in [0, \gamma_{\max}]$, i.e.,
\[
    v_{wc}^{\varepsilon}(x^{\varepsilon}, \gamma^{\varepsilon})
    \;\leq\;
    v_{wc}^{\varepsilon}(x, \gamma^{\varepsilon})
    \quad \forall\, x \in X.
\]

\medskip\noindent\textbf{Step 4 ($\hat{x}$ is a robust solution at $\hat{\gamma}$).}
Fix $x \in X$. For the right-hand side of the inequality in Step 3,
\[
    \bigl|v_{wc}^{\varepsilon_k}(x, \gamma^{\varepsilon_k}) - v_{wc}(x, \hat{\gamma})\bigr|
    \leq
    \underbrace{\bigl|v_{wc}^{\varepsilon_k}(x, \gamma^{\varepsilon_k}) - v_{wc}(x, \gamma^{\varepsilon_k})\bigr|}_{\leq\, \varepsilon_k M \,\to\, 0}
    +
    \underbrace{\bigl|v_{wc}(x, \gamma^{\varepsilon_k}) - v_{wc}(x, \hat{\gamma})\bigr|}_{\to\, 0 \text{ by Step 2}},
\]
and for the left-hand side,
\[
    \bigl|v_{wc}^{\varepsilon_k}(x^{\varepsilon_k}, \gamma^{\varepsilon_k}) - v_{wc}(\hat{x}, \hat{\gamma})\bigr|
    \leq
    \underbrace{\bigl|v_{wc}^{\varepsilon_k}(x^{\varepsilon_k}, \gamma^{\varepsilon_k}) - v_{wc}(x^{\varepsilon_k}, \gamma^{\varepsilon_k})\bigr|}_{\leq\, \varepsilon_k M \,\to\, 0}
    +
    \underbrace{\bigl|v_{wc}(x^{\varepsilon_k}, \gamma^{\varepsilon_k}) - v_{wc}(\hat{x}, \hat{\gamma})\bigr|}_{\to\, 0 \text{ by Step 2}}.
\]
Passing to the limit gives $v_{wc}(\hat{x}, \hat{\gamma}) \leq v_{wc}(x, \hat{\gamma})$ for all $x
\in X$, so $\hat{x} \in X_{rob}(\hat{\gamma})$.

\medskip\noindent\textbf{Step 5 ($\hat{x}$ is feasible and optimal for $\mathrm{SAT}$).}
Since $(x^{\varepsilon_k}, \alpha^{\varepsilon_k})$ is feasible for $\mathrm{SAT}^{\varepsilon_k}$,
\[
    v_{wc}^{\varepsilon_k}(x^{\varepsilon_k}, \gamma)
    \;\leq\; f_0 + \alpha^{\varepsilon_k}\gamma
    \quad \forall\, \gamma \in [0, \gamma_{\max}].
\]
Fix $\gamma \in [0, \gamma_{\max}]$. Adding and subtracting $v_{wc}(x^{\varepsilon_k}, \gamma)$,
\begin{align*}
v_{wc}(\hat{x}, \gamma)
&\leq
\underbrace{\bigl|v_{wc}(\hat{x}, \gamma) - v_{wc}(x^{\varepsilon_k}, \gamma)\bigr|}_{\to\, 0 \text{ by Step 2}}
+
\underbrace{\bigl|v_{wc}(x^{\varepsilon_k}, \gamma) - v_{wc}^{\varepsilon_k}(x^{\varepsilon_k}, \gamma)\bigr|}_{\leq\, \varepsilon_k M\, \to\, 0}
+
v_{wc}^{\varepsilon_k}(x^{\varepsilon_k}, \gamma) \\
&\leq
\bigl|v_{wc}(\hat{x}, \gamma) - v_{wc}(x^{\varepsilon_k}, \gamma)\bigr|
+ \varepsilon_k M
+ f_0 + \alpha^{\varepsilon_k}\gamma.
\end{align*}
Passing to the limit as $k \to \infty$ gives $v_{wc}(\hat{x}, \gamma) \leq f_0 + \hat{\alpha}\gamma$
for all $\gamma \in [0, \gamma_{\max}]$, so $(\hat{x}, \hat{\alpha})$ is feasible for
$\mathrm{SAT}$. Since $\hat{\alpha} \leq \alpha_{sat}$ by Step~1 and $(\hat{x}, \hat{\alpha})$ is
feasible, optimality of $\alpha_{sat}$ gives $\hat{\alpha} = \alpha_{sat}$, so $\hat{x}$ is an
optimal solution of $\mathrm{SAT}$.
\end{proof}

The following examples illustrate that the assumptions in Proposition~\ref{prop:sat-eq-rob} and Theorem~\ref{prop:general-convex} are critical.

\begin{examplebox}
  \label{sec:satisficing-counterexample}
  Let here $X=\{ x^1= (0, 2),  x^2=(1,  0),  x^3=(0.5, 1.7)\}$ (nonconvex), $f(x, p) = p\tpose x$, $d(p_0, p)=\|p-p_0\|_2$, $P=\Re^2$ and $p_0=(1,0)$. The associated worst-case cost is
  \[
    v_{wc}(x, \gamma) = \max_{d(p_0, p)\leq \gamma} p^\top x = p_0^\top x + \gamma \| x\|_2.
  \]
  The worst-case costs are $v(x^1, \gamma)=2\gamma$, $v(x^2, \gamma)=1 + \gamma$ and $v(x^3, \gamma)\approx 0.5 + 1.77\gamma$. For $0\le \gamma \le 1$ solution $x^1$ is robust optimal, while for $1\le \gamma$ solution $x^2$ is robust optimal; see Figure~\ref{fig:counterexample}. In particular, note that $x^3$ is always dominated in terms of worst-case cost by $x^1$ or $x^2$.

  We consider a satisficing formulation
  \[
    \begin{array}{r@{~~}l}
      \min &  \alpha \\
      \st & \max_{d(p_0, p)\leq \gamma} p^\top x \le f_0 + \alpha \gamma \quad \forall \gamma \geq 0 \\
           & x\in X
    \end{array}
  \]
  with satisfaction level $f_0=1/2$.
  Note that solution $x^2$ is not feasible in the satisficing problem since $p_0\tpose x^2 = 1 > f_0$. The optimal $\alpha$ for $x^3$ is $\alpha^3=\norm{x^3}_2\approx 1.77$ while for $x^1$ it must hold $\alpha^1 = \norm{x^1}_2=2$. Hence $(x_{sat}, \alpha_{sat}) = (x^3, \norm{x^3})$ is the unique satisficing solution and yet $x^3$ is not robustly optimal for any $\gamma\geq 0$.
\end{examplebox}

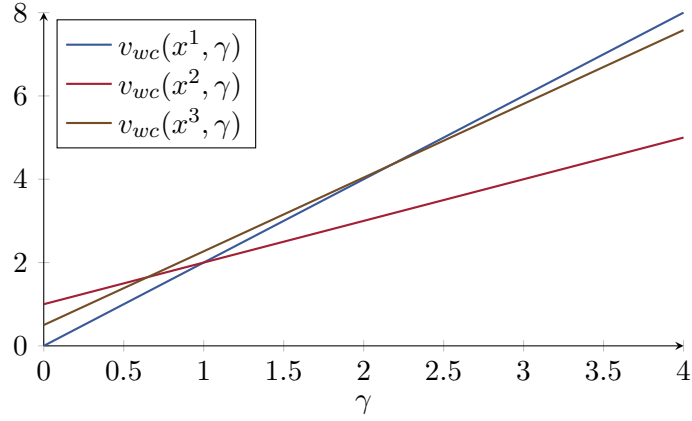
\begin{figure}[t]
    \centering
\begin{tikzpicture}
\begin{axis}[
  domain=0:4,
  samples=100,
  xlabel={$\gamma$},
  legend style={at={(0.02,0.98)},anchor=north west},
  axis lines=left,
  width=10cm,
  height=6cm
  ]

  \addplot+[no marks, thick] {2*x};
  \addlegendentry{$v_{wc}(x^1, \gamma)$}

  \addplot+[no marks, thick] {1 + x};
  \addlegendentry{$v_{wc}(x^2, \gamma)$}

  \addplot+[no marks, thick] {0.5 + 1.77*x};
  \addlegendentry{$v_{wc}(x^3, \gamma)$}

\end{axis}
\end{tikzpicture}
    \caption{Worst-case costs for the three decisions in the nonconvex counterexample.}
    \label{fig:counterexample}
\end{figure}

\begin{examplebox}
  \label{ex:sat-not-rob}
  Let here $X=[0, 2]$, $f(x, p) = (x-1)^2 + p$ (not strictly concave in $p$), $d(p_0, p)=|p-p_0|$, $P=\Re$ and $p_0=0$. The associated worst-case cost is
  \[
    v_{wc}(x, \gamma) = (x-1)^2 +\gamma
  \]
  where clearly we have $x_{rob}(\gamma)=1$ for all $\gamma\geq 0$ achieving oracle worst-case cost $v_{wc}^\star(\gamma) = \gamma$.
  We consider now the satisficing formulation
  \[
    \begin{array}{r@{~~}l}
      \min & \alpha \\
      \st &  v_{wc}(x, \gamma) \le f_0 + \alpha \gamma \quad \forall \gamma \geq 0 \\
           & x\in X
    \end{array}
  \]
  with satisfaction level $f_0=1/4>0$ with solution set $(X_{sat}, \alpha_{sat}) = (\set{x}{(x-1)^2\leq f_0},1)$ and hence there are satisficing solutions which are not robustly optimal.
\end{examplebox}

\begin{proposition}
  \label{prop:sat-weakly-minimal}
  Let $\Gamma$ be compact and $\gamma\mapsto v_{wc}(x,\gamma)$ be continuous for all $x\in X$. Suppose $(x_{sat}, \alpha_{sat})$ is optimal in~\eqref{eq:rob-grc}. Then $x_{sat}$ is weakly minimal in~\eqref{eq:multiobjective}.
\end{proposition}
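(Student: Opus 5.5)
We argue by contradiction. Suppose $x_{sat}$ is not weakly minimal in \eqref{eq:multiobjective}. Then there is a decision $y\in X$ with $A(y,\gamma)<A(x_{sat},\gamma)$ for all $\gamma\in\Gamma$. Since the oracle term $v^\star_{wc}(\gamma)$ is common to both sides, this is equivalent to
\[
v_{wc}(y,\gamma)<v_{wc}(x_{sat},\gamma)\quad\forall\gamma\in\Gamma .
\]
The plan is to use this strict dominance to build a feasible point of \eqref{eq:rob-grc} with objective strictly smaller than $\alpha_{sat}$. That contradicts optimality.

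\textbf{Reformulating the constraint.} We rewrite the semi-infinite constraint of \eqref{eq:rob-grc}, as in the proof of Proposition~\ref{prop:sat-eq-rob}, as
\[
v_{wc}(x,\gamma)\leq f_0+\alpha\gamma\quad\forall\gamma\in\Gamma,
\]
where $\Gamma$ plays the role of the range of $d(p_0,\cdot)$ over $P_\infty$. Feasibility of $(x_{sat},\alpha_{sat})$ gives $v_{wc}(y,\gamma)<f_0+\alpha_{sat}\gamma$ for every $\gamma\in\Gamma$.

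\textbf{Extracting a uniform margin.} The function $\gamma\mapsto f_0+\alpha_{sat}\gamma-v_{wc}(y,\gamma)$ is continuous by assumption and strictly positive on the compact set $\Gamma$. It therefore attains a minimum $\eta>0$. We then set $\alpha'=\max(0,\alpha_{sat}-\eta/\gamma_{\max})$, where $\gamma_{\max}=\max\Gamma$. For each $\gamma\in\Gamma$ this gives
\[
v_{wc}(y,\gamma)\leq f_0+\alpha_{sat}\gamma-\eta\leq f_0+\alpha'\gamma ,
\]
because $(\alpha_{sat}-\alpha')\gamma\leq\eta$. Hence $(y,\alpha')$ is feasible, and $\alpha'<\alpha_{sat}$ whenever $\alpha_{sat}>0$.

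\textbf{Degenerate cases.} If $\alpha_{sat}=0$, strict improvement of $\alpha$ is impossible. This case is the main obstacle. Here we would argue differently: any optimal $x_{sat}$ then satisfies $v_{wc}(x_{sat},\gamma)\leq f_0$ for all $\gamma$, and condition \eqref{eq:sat-conditions} forces $f_0=v^\star_{wc}(\gamma_{\max})$. Evaluating at $\gamma_{\max}$ yields $v_{wc}(x_{sat},\gamma_{\max})\leq v^\star_{wc}(\gamma_{\max})\leq v_{wc}(y,\gamma_{\max})$, contradicting strict dominance at $\gamma_{\max}\in\Gamma$.

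The case $\gamma_{\max}=0$ is handled the same way. There $\Gamma=\{0\}$, and $x_{sat}$ must minimize $v_{wc}(\cdot,0)$ by \eqref{eq:sat-conditions} if $f_0=v^\star_{wc}(0)$. If instead $f_0>v^\star_{wc}(0)$, strict domination at $0$ again contradicts the fact that $v_{wc}(x_{sat},\gamma_{\max})$ is minimal among decisions satisfying the constraint.

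The delicate point throughout is matching the paper's $\Gamma$ with the set of $\gamma$ appearing in \eqref{eq:rob-grc}. I would take $\Gamma=[0,\gamma_{\max}]$ and note that the constraints for $\gamma>\gamma_{\max}$ are implied by the one at $\gamma_{\max}$, since $v_{wc}(x,\cdot)$ is constant beyond $\gamma_{\max}$ while the right-hand side is nondecreasing.
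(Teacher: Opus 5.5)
Your proof is correct and follows the paper's argument: a strictly dominating $y$ gives $v_{wc}(y,\gamma)<v_{wc}(x_{sat},\gamma)\leq f_0+\alpha_{sat}\gamma$ on $\Gamma$, compactness and continuity yield a uniform margin, and lowering $\alpha_{sat}$ by that margin divided by $\max\Gamma$ contradicts optimality. Your additions (clipping at $\max(0,\cdot)$, the $\alpha_{sat}=0$ case via \eqref{eq:sat-conditions}, and the explicit identification $\Gamma=[0,\gamma_{\max}]$, which the paper uses tacitly when it calls the new point feasible in \eqref{eq:rob-grc}) fill in points the paper passes over, since its candidate $\alpha_{sat}-\delta/\max\Gamma$ may be negative; however, your subcase $f_0>v^\star_{wc}(0)$ for $\gamma_{\max}=0$ is already excluded by \eqref{eq:sat-conditions}, and the justification you give for it would not stand on its own.
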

\begin{proof}
  Suppose for contradiction that $x_{sat}$ is not weakly minimal. Then there exists $y\in X$ such that $A(y,\gamma)<A(x_{sat},\gamma)$ for all $\gamma\in\Gamma$. Adding $v^\star_{wc}(\gamma)$ to both sides gives
  \[
    v_{wc}(y,\gamma) < v_{wc}(x_{sat},\gamma) \leq f_0+\alpha_{sat}\gamma \quad\forall\gamma\in\Gamma,
  \]
  where the second inequality follows from the feasibility of $(x_{sat},\alpha_{sat})$ in~\eqref{eq:rob-grc}. Since $\Gamma$ is compact and $\gamma\mapsto f_0+\alpha_{sat}\gamma - v_{wc}(y,\gamma)$ is continuous and strictly positive on $\Gamma$, its minimum
  \[
    \delta \defn \min_{\gamma\in\Gamma}\bigl(f_0+\alpha_{sat}\gamma - v_{wc}(y,\gamma)\bigr) > 0
  \]
  is attained by Weierstrass' extreme value theorem. Rearranging gives
  \[
    v_{wc}(y,\gamma) \leq f_0+\alpha_{sat}\gamma - \delta \leq f_0+\Bigl(\alpha_{sat}-\tfrac{\delta}{\max\Gamma}\Bigr)\gamma \quad\forall\gamma\in\Gamma,
  \]
  so $(y,\,\alpha_{sat}-\delta/\max\Gamma)$ is feasible in~\eqref{eq:rob-grc}, contradicting the optimality of $(x_{sat},\alpha_{sat})$.

  The assumption that $\Gamma$ is compact is necessary: in Example~\ref{ex:sat-not-rob} with $\Gamma=\Re_+$, which is closed but not bounded (hence not compact), the adversarial regret simplifies to $A(x,\gamma)=(x-1)^2$, so the entire set of satisficing solutions $\set{x}{(x-1)^2\leq f_0}$ fails to be weakly minimal, with the sole exception $x_{sat}=1$.
\end{proof}

\section{Supporting Results on Algorithmic Aspects}
\label{sec:supporting-algorithmic}

\begin{lemma}
  \label{lemma:KL-lips}
  We have that $\gamma\mapsto v_{wc}(x, \gamma)$ is Lipschitz continuous with constant $$L=\sqrt{2}\left(\ell_{\max}-\ell_{\min}\right).$$
\end{lemma}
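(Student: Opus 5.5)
The plan is to work with the dual characterization stated in the example just above. For a fixed decision $x$, write
\[
  v_{wc}(x,\gamma)=\min_{\alpha\geq \max_{\xi\in\Xi}\ell(x,\xi)} \; \alpha - e^{-\gamma^2}\exp\bigl(\mathbb{E}_{\mathbb{P}_0}[\log(\alpha-\ell(x,\xi))]\bigr).
\]
For each feasible $\alpha$, the objective is a smooth function of $\gamma$ with derivative
\[
  2\gamma e^{-\gamma^2}\exp\bigl(\mathbb{E}_{\mathbb{P}_0}[\log(\alpha-\ell(x,\xi))]\bigr).
\]
A pointwise minimum of functions that are each $L$-Lipschitz in $\gamma$ is again $L$-Lipschitz. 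It therefore suffices to bound this derivative uniformly over $\gamma\ge0$ and over the relevant range of $\alpha$.

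First I would restrict $\alpha$ to a bounded interval without loss of optimality. The lower limit is $\alpha\ge\max_\xi\ell(x,\xi)$. For the upper limit, the primal value satisfies $v_{wc}\le \ell_{\max}$, while the dual objective grows like $\alpha(1-e^{-\gamma^2})$ up to lower-order terms. That growth alone is not enough to bound the minimizer near $\gamma=0$, so I would take a cleaner route: bound $\exp(\mathbb{E}[\log(\alpha-\ell)])$ by $\alpha-\ell_{\min}$ using Jensen's inequality. This gives a derivative bound of $2\gamma e^{-\gamma^2}(\alpha^\star-\ell_{\min})$ at the optimal $\alpha^\star(\gamma)$.

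An envelope/Danskin argument then gives $|\partial_\gamma v_{wc}|\le 2\gamma e^{-\gamma^2}(\alpha^\star(\gamma)-\ell_{\min})$. The remaining task is to control $\alpha^\star(\gamma)-\ell_{\min}$, and here I would work directly with the primal problem. Its value lies between $\mathbb{E}_{\mathbb{P}_0}[\ell]\ge \ell_{\min}$ and $\ell_{\max}$, and the first-order condition in $\alpha$ ties $\alpha^\star$ to the worst-case value. Combining these should show that $e^{-\gamma^2}\exp(\mathbb{E}[\log(\alpha^\star-\ell)])\cdot 2\gamma$ stays bounded by a constant times $(\ell_{\max}-\ell_{\min})$. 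The bound should be independent of $x$, since only $\ell_{\min}$ and $\ell_{\max}$ enter.

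As an alternative route that avoids the dual, I would argue directly via mixtures. Let $\mathbb{P}_1$ be optimal at radius $\gamma_1$, and let $\gamma_2>\gamma_1$. The task is to exhibit a distribution feasible at $\gamma_1$ whose expected loss is within $L(\gamma_2-\gamma_1)$ of the optimum at $\gamma_2$. The natural candidate is the mixture $\mathbb{Q}_\lambda=\lambda\mathbb{P}_0+(1-\lambda)\mathbb{P}_2$, where $\mathbb{P}_2$ is optimal at $\gamma_2$. By convexity of $\mathrm{KL}(\mathbb{P}_0,\cdot)$ in its second argument,
\[
  \mathrm{KL}(\mathbb{P}_0,\mathbb{Q}_\lambda)\le(1-\lambda)\gamma_2^2 .
\]
Choosing $1-\lambda=\gamma_1^2/\gamma_2^2$ makes $\mathbb{Q}_\lambda$ feasible at $\gamma_1$. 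The expected loss then changes by at most
\[
  \lambda(\ell_{\max}-\ell_{\min})=\Bigl(1-\tfrac{\gamma_1^2}{\gamma_2^2}\Bigr)(\ell_{\max}-\ell_{\min}).
\]

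This bound alone is only Lipschitz away from zero, because $1-\gamma_1^2/\gamma_2^2$ does not scale like $\gamma_2-\gamma_1$ when $\gamma_1$ is small. So the main obstacle is the behaviour near small $\gamma$. There I would use Pinsker's inequality. Since $\mathrm{TV}(\mathbb{P}_0,\mathbb{P})\le\sqrt{\mathrm{KL}/2}\le\gamma/\sqrt2$, the $\gamma$-ball lies within a total-variation ball of radius $\gamma/\sqrt2$. Hence
\[
  v_{wc}(x,\gamma)-\mathbb{E}_{\mathbb{P}_0}[\ell]\le \sqrt2\,\gamma(\ell_{\max}-\ell_{\min}),
\]
which is where the stated constant $\sqrt2(\ell_{\max}-\ell_{\min})$ should come from.

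To get a Lipschitz bound between two positive radii, I would use concavity of $\gamma\mapsto v_{wc}(x,\gamma^{\,})$. This needs care, since concavity in the radius holds for the constraint $\mathrm{KL}\le r$, i.e. in $r=\gamma^2$. I would therefore try to prove concavity of $v_{wc}$ directly in $\gamma$ via the mixture argument: the convex combination of KL values bounds the mixed radius squared. If concavity in $\gamma$ holds, the Lipschitz constant equals the right-derivative at $0$, and this is bounded by $\sqrt2(\ell_{\max}-\ell_{\min})$ from the Pinsker estimate, uniformly in $x$.

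I expect establishing this concavity, or an equivalent slope bound at intermediate radii, to be the hardest step.
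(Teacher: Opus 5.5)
Your proposal has a genuine gap at the step you flag as hardest, and the route you propose there cannot work: $\gamma\mapsto v_{wc}(x,\gamma)$ is in general \emph{not} concave in $\gamma$.

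\textbf{Counterexample to concavity in $\gamma$.} Take $\Xi=\{\xi_0,\xi_1\}$ with $\ell(x,\xi_0)=0$, $\ell(x,\xi_1)=1$ and $\mathbb{P}_0=\delta_{\xi_0}$. Then $\KL(\mathbb{P}_0,\mathbb{P})=-\log\mathbb{P}(\{\xi_0\})$ and $v_{wc}(x,\gamma)=1-e^{-\gamma^2}$. This function is strictly convex on $[0,1/\sqrt2)$ and has right-derivative $0$ at $\gamma=0$, yet its Lipschitz constant is $\sqrt2e^{-1/2}>0$. So the claim that the Lipschitz constant equals the right-derivative at $0$ fails.

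The mixture heuristic could not have delivered it either. Mixing maximizers at $\gamma_1,\gamma_2$ with weight $t$ certifies only $\KL\le t\gamma_1^2+(1-t)\gamma_2^2$. By Jensen this is at least $(t\gamma_1+(1-t)\gamma_2)^2$, so you get concavity in $r=\gamma^2$, not in $\gamma$.

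\textbf{The dual route is also incomplete.} It is left open at its only nontrivial step (``combining these should show''). It also cannot be run as a minimum of uniformly Lipschitz functions over a fixed bounded $\alpha$-range. The $\gamma$-derivative $2\gamma e^{-\gamma^2}\exp(\mathbb{E}_{\mathbb{P}_0}[\log(\alpha-\ell)])$ grows linearly in $\alpha$, while $\alpha^\star(\gamma)\to\infty$ as $\gamma\to0$ whenever $\ell(x,\cdot)$ is not $\mathbb{P}_0$-a.s.\ constant.

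\textbf{The missing idea.} You never need concavity in $\gamma$; you had the right inequality and weakened it too early. Your mixture $\mathbb{Q}_\lambda$ with $\lambda=1-\gamma_1^2/\gamma_2^2$ gives $v_{wc}(x,\gamma_1)\ge\lambda\,\mathbb{E}_{\mathbb{P}_0}[\ell]+(1-\lambda)\,v_{wc}(x,\gamma_2)$. Use an $\varepsilon$-optimal $\mathbb{P}_2$ if the supremum is not attained. Rearranged, this reads
\[
  v_{wc}(x,\gamma_2)-v_{wc}(x,\gamma_1)\le\frac{\gamma_2^2-\gamma_1^2}{\gamma_2^2}\bigl(v_{wc}(x,\gamma_2)-\mathbb{E}_{\mathbb{P}_0}[\ell]\bigr).
\]
Do not bound the last factor by $\ell_{\max}-\ell_{\min}$. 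Bound it with Pinsker in its sharp form: for every $\mathbb{P}$ with $\KL(\mathbb{P}_0,\mathbb{P})\le\gamma_2^2$,
\[
\mathbb{E}_{\mathbb{P}}[\ell]-\mathbb{E}_{\mathbb{P}_0}[\ell]\le(\ell_{\max}-\ell_{\min})\sup_A|\mathbb{P}(A)-\mathbb{P}_0(A)|\le(\ell_{\max}-\ell_{\min})\gamma_2/\sqrt2 .
\]
Your displayed $\sqrt2\,\gamma$ is a factor $2$ too loose and would only give $2\sqrt2(\ell_{\max}-\ell_{\min})$. Finally, $\gamma_2^2-\gamma_1^2\le2\gamma_2(\gamma_2-\gamma_1)$ yields exactly $\sqrt2(\ell_{\max}-\ell_{\min})(\gamma_2-\gamma_1)$.

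This is the paper's proof. The paper obtains the displayed inequality as the secant (three-chord) inequality between $0$, $\gamma_1^2$, $\gamma_2^2$ for the concave function $\psi(r)=v_{wc}(x,\sqrt r)$. Your mixture argument is a self-contained derivation of that same secant bound, without citing concavity of $\psi$.
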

\begin{proof}
  Define $\psi(r) \defn v_{wc}(x, \sqrt{r})$ for $r \geq 0$ and observe that it is concave and non-decreasing with $\psi(0) = \E{\P_0}{\ell(x,\xi)}$ \citep{vanparys2021data}.
  Pinsker's inequality gives $\KL(\P_0, \P) \leq r \implies \|\P - \P_0\|_{TV} \leq \sqrt{r/2}$, where $\|\cdot\|_{TV}$ denotes the total variation norm, so
  \begin{equation}\label{eq:KL-pinsker}
    \psi(r) - \psi(0) \leq (\ell_{\max} - \ell_{\min})\sqrt{r/2}.
  \end{equation}
  For $0 \leq \gamma_1 < \gamma_2$, set $r_i = \gamma_i^2$.
  By concavity of $\psi$, the secant slope from $r_1$ to $r_2$ is at most the secant slope from $0$ to $r_2$:
  \[
    \psi(r_2) - \psi(r_1)
    \leq \frac{r_2 - r_1}{r_2}\big(\psi(r_2) - \psi(0)\big)
    \leq \frac{(\gamma_2^2 - \gamma_1^2)(\ell_{\max} - \ell_{\min})}{\sqrt{2}\,\gamma_2}.
  \]
  Using $\gamma_2^2 - \gamma_1^2 = (\gamma_2 + \gamma_1)(\gamma_2 - \gamma_1) \leq 2\gamma_2(\gamma_2 - \gamma_1)$ yields
  \begin{equation*}
    v_{wc}(x, \gamma_2) - v_{wc}(x, \gamma_1) \leq \sqrt{2}\,(\ell_{\max} - \ell_{\min}) (\gamma_2 - \gamma_1).
  \end{equation*}
\end{proof}

\section{Supporting Results on the Knapsack Experiment}

\begin{figure}
    \centering
    \includegraphics[width=0.8\linewidth]{ 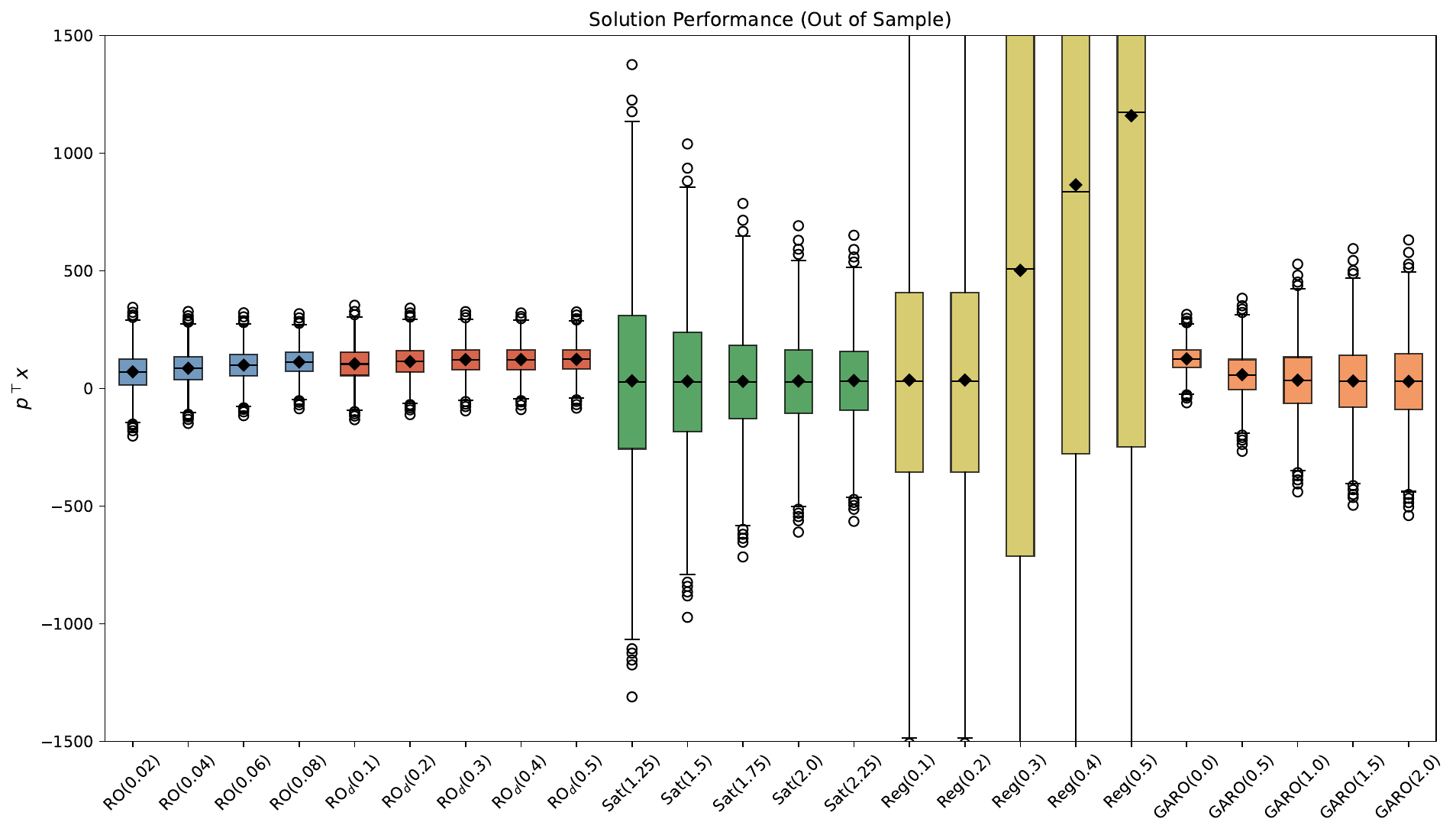}
    \caption{Boxplots for performance over all test scenarios for the knapsack problem with $n=50$ and Gaussian data with inverse mean-variance relationship. The diamonds denote the mean value.}
    \label{fig:gaussian_inverse_boxplots}
\end{figure}

\begin{figure}
    \centering
    \includegraphics[width=0.8\linewidth]{ 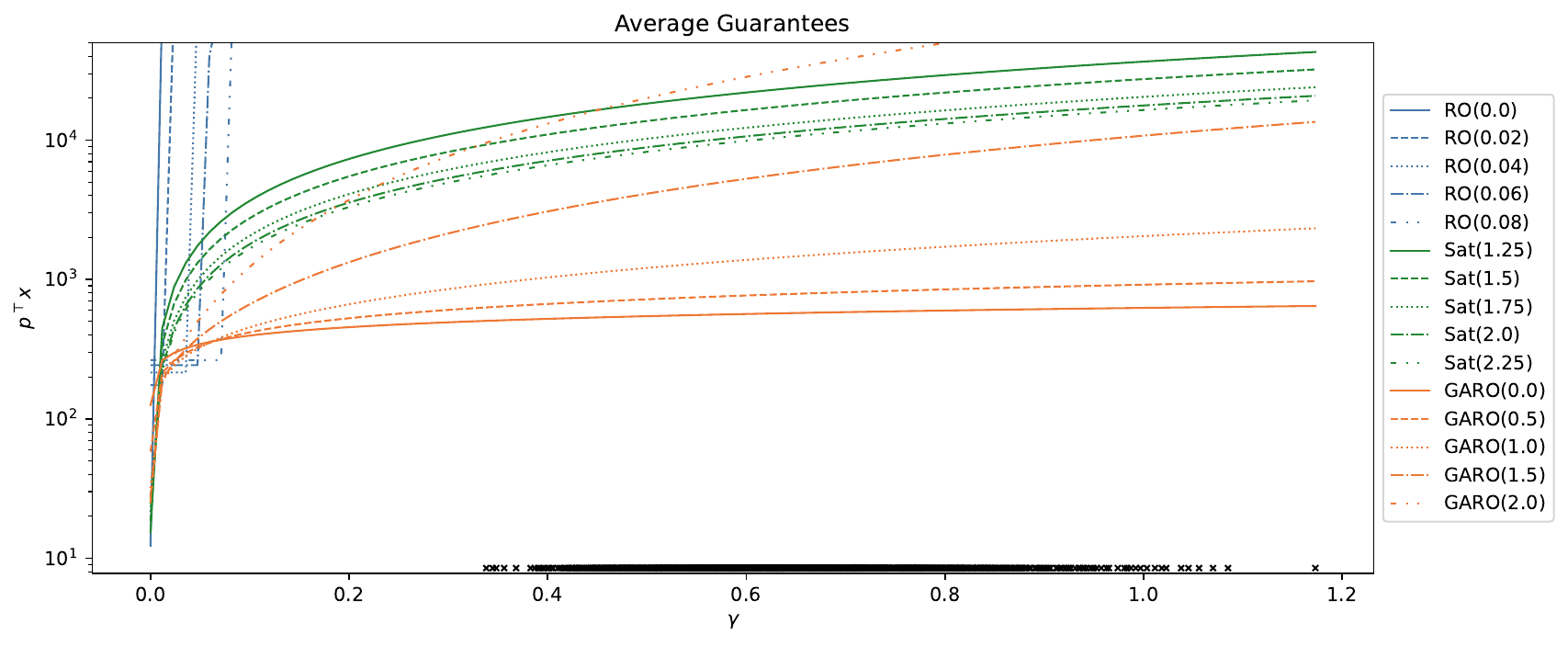}
    \caption{Performance guarantees for the minimum knapsack problem with $n=50$ for Gaussian data with inverse mean-variance relationship.}
    \label{fig:gaussian_inverse_guarantees}
\end{figure}

\end{document}